\documentclass{amsart}

\usepackage{amsmath,amsthm, amsfonts, amssymb, mathrsfs}
\usepackage[hidelinks]{hyperref}
\usepackage{cite}
\usepackage[capitalize]{cleveref} 
\usepackage{tikz, tikz-cd} 
\usetikzlibrary{positioning}
\usetikzlibrary{decorations.markings}
\usepackage{dynkin-diagrams}
\usepackage{parskip, verbatim}
\usepackage{microtype,fixltx2e,lmodern} 
\usepackage[utf8]{inputenc} 
\usepackage[T1]{fontenc} 
\usepackage{enumerate,comment,braket,xspace,csquotes} 
\usepackage[all,cmtip]{xy} 

\usepackage{multirow}

\usepackage{xcolor}
\hypersetup{
	colorlinks,
	linkcolor={red!50!black},
	citecolor={blue!50!black},
	urlcolor={blue!80!black}
}

\newcommand{\rH}{\mathrm H}

\newcommand{\fS}{\mathfrak S}

\newcommand{\cF}{\mathcal F}

\newcommand{\cL}{\mathcal L}
\newcommand{\cM}{\mathcal M}

\newcommand{\cO}{\mathcal O}

\newcommand{\cV}{\mathcal V}

\DeclareFontFamily{U}{BOONDOX-calo}{\skewchar\font=45 }
\DeclareFontShape{U}{BOONDOX-calo}{m}{n}{<-> s*[1.05] BOONDOX-r-calo}{}
\DeclareFontShape{U}{BOONDOX-calo}{b}{n}{<-> s*[1.05] BOONDOX-b-calo}{}
\DeclareMathAlphabet{\mathcalboondox}{U}{BOONDOX-calo}{m}{n}

\newcommand{\bZ}{\mathbb{Z}}							  				
						
\newcommand{\bT}{\mathbb{T}}						
						
\newcommand{\bB}{\mathbb{B}}						
\newcommand{\bX}{\mathbb{X}}					

\newcommand{\bQ}{\mathbb{Q}}
\newcommand{\bC}{\mathbb{C}}

\newcommand{\bG}{\mathbb{G}}
\newcommand{\bk}{\mathbf{k}}


\makeatletter
\let\save@mathaccent\mathaccent
\newcommand*\if@single[3]{%
	\setbox0\hbox{${\mathaccent"0362{#1}}^H$}%
	\setbox2\hbox{${\mathaccent"0362{\kern0pt#1}}^H$}%
	\ifdim\ht0=\ht2 #3\else #2\fi
}
\newcommand*\rel@kern[1]{\kern#1\dimexpr\macc@kerna}
\newcommand*\widebar[1]{\@ifnextchar^{{\wide@bar{#1}{0}}}{\wide@bar{#1}{1}}}
\newcommand*\wide@bar[2]{\if@single{#1}{\wide@bar@{#1}{#2}{1}}{\wide@bar@{#1}{#2}{2}}}
\newcommand*\wide@bar@[3]{%
	\begingroup
	\def\mathaccent##1##2{%
		\let\mathaccent\save@mathaccent
		\if#32 \let\macc@nucleus\first@char \fi
		\setbox\z@\hbox{$\macc@style{\macc@nucleus}_{}$}%
		\setbox\tw@\hbox{$\macc@style{\macc@nucleus}{}_{}$}%
		\dimen@\wd\tw@
		\advance\dimen@-\wd\z@
		\divide\dimen@ 3
		\@tempdima\wd\tw@
		\advance\@tempdima-\scriptspace
		\divide\@tempdima 10
		\advance\dimen@-\@tempdima
		\ifdim\dimen@>\z@ \dimen@0pt\fi
		\rel@kern{0.6}\kern-\dimen@
		\if#31
		\overline{\rel@kern{-0.6}\kern\dimen@\macc@nucleus\rel@kern{0.4}\kern\dimen@}%
		\advance\dimen@0.4\dimexpr\macc@kerna
		\let\final@kern#2%
		\ifdim\dimen@<\z@ \let\final@kern1\fi
		\if\final@kern1 \kern-\dimen@\fi
		\else
		\overline{\rel@kern{-0.6}\kern\dimen@#1}%
		\fi
	}%
	\macc@depth\@ne
	\let\math@bgroup\@empty \let\math@egroup\macc@set@skewchar
zen@everymath{\mathgroup\macc@group\relax}%
	\macc@set@skewchar\relax
	\let\mathaccentV\macc@nested@a
	\if#31
	\macc@nested@a\relax111{#1}%
	\else
	\def\gobble@till@marker##1\endmarker{}%
	\futurelet\first@char\gobble@till@marker#1\endmarker
	\ifcat\noexpand\first@char A\else
	\def\first@char{}%
	\fi
	\macc@nested@a\relax111{\first@char}%
	\fi
	\endgroup
}
\makeatother






\makeatletter
\newcommand{\oset}[3][0.2ex]{%
	\mathrel{\mathop{#3}\limits^{
			\vbox to#1{\kern-2\ex@
				\hbox{$\scriptstyle#2$}\vss}}}}
\makeatother
\newcommand{\timesL}{\oset{L}{\times}}
\newcommand{\timesP}{\oset{P}{\times}}

\newcommand{\vlam}{{\check{\lambda}}}
\newcommand{\lam}{\lambda}
\newcommand{\vmu}{{\check{\mu}}}

\newcommand{\valpha}{{\check{\alpha}}}
\newcommand{\vbeta}{{\check{\beta}}}
\newcommand{\vlG}{\check{\lambda}_G}

\newcommand{\vlL}{\check{\lambda}_L}

\newcommand{\<}{\langle}
\renewcommand{\>}{\rangle}

\providecommand{\sst}{\mathsf{sst}}
\newcommand{\st}{\mathsf{st}}
\newcommand{\gr}{\mathsf{gr}}
\newcommand{\coZ}{{Z}^\mathrm{c}}

\newcommand{\ad}{\mathsf{ad}}
\newcommand{\pt}{\mathsf{pt}}

\newcommand{\acts}{\curvearrowright}

\newcommand{\inv}{^{-1}}

\newcommand{\red}{^\mathrm{red}}

\newcommand{\Ane}{A_{n\!/\!e-1}}


\usetikzlibrary{decorations.markings} 
\tikzset{
  closed/.style = {decoration = {markings, mark = at position 0.5 with { \node[transform shape, xscale = .8, yscale=.4] {/}; } }, postaction = {decorate} },
  open/.style = {decoration = {markings, mark = at position 0.5 with { \node[transform shape, scale = .7] {$\circ$}; } }, postaction = {decorate} }
}
\newcommand{\PGL}{\operatorname{PGL}}
\newcommand{\GL}{\operatorname{GL}}
\newcommand{\SL}{\operatorname{SL}}

\newcommand{\fg}{\mathfrak{g}}
\newcommand{\fp}{\mathfrak{p}}

\newcommand{\fh}{\mathfrak{h}}
\newcommand{\fl}{\mathfrak{l}}
\newcommand{\fk}{\mathfrak{k}}
\renewcommand\sc{\mathsf{sc}} 



\newcommand{\Bun}{\operatorname{Bun}}


\DeclareMathOperator{\Aut}{Aut}

\DeclareMathOperator{\Hom}{Hom}
\DeclareMathOperator{\End}{End}

\DeclareMathOperator{\Pic}{Pic}
\DeclareMathOperator{\Jac}{{Jac}}

\DeclareMathOperator{\Sp}{Sp}

\DeclareMathOperator{\Spec}{Spec}

\DeclareMathOperator{\id}{\mathsf{id}}
\DeclareMathOperator{\Id}{\mathsf{Id}}

\newcommand{\ind}{\operatorname{ind}}
\newcommand\can{\mathsf{can}}

\theoremstyle{theorem}
\newtheorem{thm}{\bf{Theorem}}[section]
\newtheorem{lemma}[thm]{Lemma}                                              
\newtheorem{cor}[thm]{Corollary}
\newtheorem{prop}[thm]{Proposition}

\theoremstyle{remark}
\newtheorem{rem}{Remark}

\theoremstyle{definition}
\newtheorem{defi}[thm]{Definition}

\newtheorem{proof*}{Proof}
\newtheorem*{thm*}{Theorem}

\newcommand{\refeq}[1]{(\ref{#1})}

\title[G-bundles on elliptic curves]{Revisiting the moduli space of semistable $G$-bundles over elliptic curves}
\author{Drago\c s Fr\u a\c til\u a}
\address{IRMA,
7 rue Ren\'e Descartes,
67084 Strasbourg Cedex}
\email{fratila@math.unistra.fr}

\begin{document}

\xyoption {all} 

\begin{abstract} We show that the moduli space of semistable $G$-bundles on an elliptic curve for a reductive group $G$ is isomorphic to a power of the elliptic curve modulo a certain Weyl group which depend on the topological type of the bundle.
This generalises a result of Laszlo to arbitrary connected components and recovers the global description of the moduli space due to Friedman--Morgan--Witten and Schweigert. 
The proof is entirely in the realm of algebraic geometry and works in arbitrary characteristic. 
\end{abstract}
\maketitle 

\section{Introduction}

\subsection{} The study of principal $G$-bundles on elliptic curves began with the seminal paper of Atiyah~\cite{Ati} where he gave a complete and beautiful description of all the semistable vector bundles. He didn't discuss the moduli space but one could have easily guessed from his results the precise statement. Let us denote by $\cM_r^{d}$ the moduli space of semistable vector bundles of rank $r$ and degree $d$ on en elliptic curve $E/\bC$. In case $r$ and $d$ are coprime Atiyah essentially proved that the determinant map
\begin{equation}\label{E:Atiyah-det-iso}
	\det:\cM_r^d\to\cM_1^d
\end{equation}
is an isomorphism of algebraic varieties.

In general, if we put $m=\gcd(r,d)$, we have 
\begin{equation}\label{E:iso-Tu}
	\cM_r^d\simeq E^m/\fS_m
\end{equation}
where $E$ is the elliptic curve and $\fS_m$ is the symmetric group on $m$ letters (the isomorphism is not canonical however). The isomorphisms \refeq{E:Atiyah-det-iso} and \refeq{E:iso-Tu} hold in any characteristic and the proof, in characteristic 0, appeared in a paper by Tu~\cite[Theorem 1]{Tu_ell}.

For a reductive group $G$  and an elliptic curve $E$ over an algebraically closed field $\bk$ of arbitrary characteristic we denote by $\cM_G^d$ the moduli space of semistable $G$-bundles on $E$ of topological type $d\in\pi_1(G)$. 
The main result of this note can be summarized a bit imprecisely as
\begin{thm*} For any $d\in\pi_1(G)$ there is an isomorphism of moduli spaces
	\[\cM_G^d\simeq \cM_{C_d}^{d'}/W_d\]
	where $C_d$ is a certain algebraic torus, $d'\in\pi_1(C_d)$ and $W_d$ a certain Weyl group, all depending strongly on $d$.
\end{thm*}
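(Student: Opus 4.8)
The plan is to reduce the classification of semistable $G$-bundles to the far simpler classification of bundles for a torus, exploiting the fact that over an elliptic curve a semistable bundle is, up to $S$-equivalence, as split as possible. I would work throughout with the good moduli space, so that $\cM_G^d$ parametrizes polystable (equivalently, regularly semistable) $G$-bundles of type $d$, following Ramanathan's theory of semistable principal bundles in the form valid in arbitrary characteristic. The central structural input is that a \emph{regularly} semistable bundle $P$ — one whose automorphism group scheme has minimal dimension — has connected automorphism group $\Aut(P)^\circ$ a torus, and that $P$ reduces canonically to the centralizer $L := Z_G(\Aut(P)^\circ)$, a Levi subgroup depending on $d$ only up to conjugacy.

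This produces the data in the theorem. I would set $C_d := Z(L)^\circ$, the torus; fix once and for all a regularly semistable reference $L$-bundle $P_0$ with $\Aut_L(P_0)^\circ = C_d$; and let $d' \in \pi_1(C_d)$ be its induced type. The group $W_d$ is then the subgroup of the relative Weyl group $N_G(L)/L$ fixing $d'$, realized concretely as the finite group permuting the regularly semistable reductions to $L$ of a fixed polystable bundle. As a consistency check, for $G=GL_r$ one finds $L = \prod_{i=1}^m GL_{r/m}$, so $C_d = \Gm^m$, $\cM_{C_d}^{d'}\cong E^m$, and $W_d = \fS_m$, recovering \refeq{E:iso-Tu}.

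Next I would build the comparison map. Twisting $P_0$ by a $C_d$-bundle $Q$ of type $d'$ (via the central inclusion $C_d \hookrightarrow L$) and extending the structure group along $L \hookrightarrow G$ yields a semistable $G$-bundle of type $d$, hence a morphism $\cM_{C_d}^{d'} \to \cM_G^d$. I would prove it is surjective onto polystable bundles — any such bundle reduces to $L$ regularly semistably, and any two reductions differ by a $C_d$-twist of $P_0$ — and that its fibers are exactly the $W_d$-orbits, since two $C_d$-twists induce isomorphic $G$-bundles precisely when related by an element of $N_G(L)/L$ preserving the type. This gives a bijective morphism $\cM_{C_d}^{d'}/W_d \to \cM_G^d$. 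Since $\cM_{C_d}^{d'}\cong E^{\dim C_d}$ is smooth and $W_d$ acts through its linear action on the cocharacter lattice $X_*(C_d)$, the source quotient is normal; combined with normality of $\cM_G^d$ and Zariski's main theorem, the bijective morphism is an isomorphism.

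The main obstacle is twofold, and both difficulties are concentrated in positive characteristic. First, the structure theory — that regularly semistable bundles have toral connected automorphism groups and reduce canonically to a Levi $L$ with $C_d = Z(L)^\circ$ — must be established purely algebraically, without the representation-theoretic or transcendental arguments available only in characteristic zero; here one must control automorphism group schemes that may be non-reduced, and isogenies of $C_d$ that may be inseparable when $p$ divides the torsion of $\pi_1(G)$. Second, promoting the bijection to an isomorphism of varieties requires knowing that $\cM_G^d$ is normal and that the comparison morphism is separable, so that Zariski's main theorem genuinely applies; verifying this separability uniformly in $p$ is the delicate technical heart of the argument. Pinning down $W_d$ as the correct subquotient of the Weyl group — rather than the full group — is the remaining bookkeeping that makes a single statement valid across all connected components at once.
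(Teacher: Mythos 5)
Your proposal diverges from the paper in a way that introduces a genuine error: you take $C_d := Z(L)^\circ$ and define the comparison map by twisting a fixed reference bundle, $Q \mapsto \ind_L^G(P_0 \otimes Q)$, claiming its fibers are exactly $W_d$-orbits. They are not: twisting a \emph{stable} $L$-bundle by a $C_d$-bundle has a nontrivial finite stabilizer, because by Atiyah (\cref{T:At-coprime-unique-det}(2)) a stable vector bundle of coprime rank and degree is fixed by tensoring with torsion line bundles. Concretely, take $G = \GL_2$, $d = 1$: then $L = G$, $W_d = 1$, your $C_d = \Gm$, and your map is $\Pic^0(E) \to \cM_{\GL_2}^1 \simeq E$, $\cL \mapsto P_0 \otimes \cL$, whose fibers are cosets of $\Pic^0(E)[2]$ --- a degree-$4$ isogeny, not a bijection. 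In general the stabilizer of the twisting action is the finite group of bundles for $Z(L)^\circ \cap [L,L]$ (a product of torsion subgroups of $\Pic^0(E)$; compare \cref{L:stab M_Z acts on M_L}), so your map $\cM_{C_d}^{d'}/W_d \to \cM_G^d$ is surjective but not injective, and Zariski's main theorem cannot rescue a non-birational map. This is precisely why the paper takes $C_d$ to be the \emph{cocenter} $L/[L,L]$ --- isogenous to, but not equal to, your $Z(L)^\circ$ --- and uses the canonical determinant map $\det\colon \cM_L^{\vlam_L} \to \cM_{L/[L,L]}^{\det(\vlam_L)}$ of \cref{T:det is iso}: the stabilizer of the center action is exactly the set of twists killed by $\det$ (\cref{L:stab M_Z acts on M_L}), so $\det$ is injective where your twisting map is not. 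Repairing your construction by quotienting out the extra torsion leads you back to the cocenter. A second defect of the base-point construction: $w(P_0)$ need not be isomorphic to $P_0$, so your map intertwines the linear $W_d$-action on $\cM_{C_d}^{d'}$ only up to a cocycle of translations, which you would additionally have to trivialize; the paper's $\det$ is $W_d$-equivariant on the nose because it is canonical.

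Beyond this, the two inputs you explicitly defer are not routine verifications but the actual content of the paper, and your outline supplies no argument for either. First, your structure theory is the Friedman--Morgan one (bundles whose automorphism group scheme has minimal dimension, with canonical Levi reduction); the paper deliberately avoids that notion (see the footnote to \cref{D:reg bundles}) and instead invokes the Jordan--H\"older statement \cref{T:JH for Gbdles} from \cite{Fratila2016}, whose validity in characteristic $p$ is exactly what \cref{L:exist_reg_bd} (existence of regular bundles, proved via Frobenius semistability, \cref{T:Sun-thm sstb elliptic} and \cref{C:induced is semistable}) supplies. Second, the generic separability of $\cM_L^{\vlam_L} \to \cM_G^{\vlam_G}$, which you correctly identify as necessary for Zariski's main theorem, is precisely \cref{L:gen etale}, again resting on \cref{L:exist_reg_bd}; and the transitivity statement you use (``any two reductions differ by a $C_d$-twist'') is \cref{C:M_Z acts trans on M_L}, which depends on \cref{C:M_PGLn=pt} and the diagonalizable-group lemmas. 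As written, your text is a plan whose delicate steps are named but not carried out, and whose one concretely specified construction gives the wrong map.
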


\begin{rem}
	The result is surely well known to experts in characteristic 0 by passing through flat connections or twisted representations of fundamental groups. 
	However, to the author's knowledge, this is the first entirely algebraic proof that works also in positive characteristic.
\end{rem}

Laszlo~\cite{Lasz} proved the above theorem over $\bC$ in the case $d=0$, generalizing thus the isomorphism \refeq{E:iso-Tu}. More precisely, he proved that
\[\cM_G^0\simeq \cM_T^0/W\]
where we have denoted by $T$ a maximal torus of $G$ and $W$ is the Weyl group. 
His proof is through a Birkhoff-Grothendieck type result which says that every semistable $G$-bundle of degree zero over an elliptic curve is an extension of line bundles of degree zero.
Looijenga has proved~\cite{Looij_ell} that the RHS above is a weighted projective space where the weights can be read off the combinatorics of the root system of $G$.

Concerning the other components of the moduli space, motivated by 2d-conformal field theory, Schweigert has shown in \cite{Schw} that for any given topological type, say $d\in\pi_1(G)$, there is another reductive group, call it $G_d$, such that $\cM_G^d\simeq \cM_{G_d}^0$ as differentiable varieties. His statements are in the realm of differential geometry but one could possibly find a more algebro-geometric approach. 

Another take on this problem has been given by Friedman--Morgan--Witten in a series of papers \cite{FM-I, FMW1, FMW2}. They have two approaches: one is analytic through flat bundles which is very hands-on and adapted to concrete computations, however not very suitable to questions regarding families and moduli spaces. In their second approach, which uses deformation theory and is algebraic in nature, they provided a description of $\cM_G^d$ as a weighted projective space, thus recovering also Looijenga's theorem. However, their method is very different from Laszlo's and the relation to line bundles is not transparent.

\subsection{}Our goal in this note is to give a  description of $\cM_G^d$  in arbitrary characteristic in terms of line bundles by generalising Laszlo's approach.  Let us explain how to arrive at the statement of our theorem and then the difficulties and the ideas that arise in proving it.

The first difficulty is to find what should replace the torus. This has been dealt with in \cite[Theorem 3.2]{Fratila2016}.
It was shown that for a reductive group $G$ and a topological type $d\in\pi_1(G)$, there is a Levi subgroup $L_d$ and a $d'\in\pi_1(L_d)$ such that every \emph{polystable} $G$-bundle comes from a \emph{stable} $L_d$-bundle of degree $d'$. The role of the Weyl group $W$ will be taken by the relative Weyl group $W_d:=N_G(L_d)/L_d$.

This provides us with a well defined map of moduli spaces $\ind:\cM_{L_d}^{d'}\to\cM_G^d$ that is moreover \emph{finite} and $W_d$-invariant.
The second difficulty is to prove that the quotient map is an isomorphism. This would follow immediately by Zariski's main theorem provided we knew the map to be separable. It turns out that the question of separability (generic smoothness) is rather non-trivial in positive characteristic. 

The next step is relating $\cM_{L_d}^{d'}$ to line bundles. Inspired by Atiyah's theorem, the natural choice is to take the determinant map $\det:L_d \to L_d/[L_d,L_d]=:C_d$ and to show that it induces an isomorphism of varieties
\[ \det:\cM_{L_d}^{d'}\to  \cM_{C_d}^{\det(d')}. \]

Notice that $C_d$ is an algebraic torus so $\cM_{C_d}^{\det(d')}$ is isomorphic to a certain power of the Jacobian of $E$.

We have arrived at the following diagram
\begin{equation}\label{E:intro-diagr-modsp}
\begin{tikzcd}
& \cM_{L_d}^{d'}  \dlar[swap]{\ind} \drar{\det}&\\
\cM_G^d & & \cM_{C_d}^{\det(d')}
\end{tikzcd}
\end{equation} 
and our main theorem follows by proving two things: (i) $\det$ is an isomorphism; (ii) $\ind$ is generically étale with Galois group $W_d$.

The solution to both issues comes from the same tool: an \emph{extra symmetry} on the diagram (\ref{E:intro-diagr-modsp}), namely the abelian variety\footnote{just a product of several copies of  $\Jac(E)$ and maybe a finite abelian group.} $\cM_{Z(L_d)}^0$ acts on both $\cM_{L_d}^{d'}$ and $\cM_{C_d}^{\det(d')}$ making the map $\det$ equivariant. Moreover, the action is transitive and by computing the (reduced) stabilizers we conclude that $\det$ is an isomorphism. 
In addition, the action on $\cM_{L_d}^{d'}$ is used to prove that $\ind$ is generically étale by constructing a generic enough\footnote{for the differential of $\ind$} $L_d$-bundle. 

\subsection{}Some of the advantages of this approach over those in \cite{FM-I, FMW1} are that it also works in positive characteristic and the proofs in this paper are uniform with respect to the Dynkin type of the group $G$ and its isogeny class. 
To obtain precise information on the groups $L_d$ above, we do use however some results from \cite{Fratila2016}, namely Corollary 4.3 and Section 4.2, that are done by inspecting the combinatorics of each root system. 
Whereas in \cite{FMW1, Schw} the approach is set-theoretical and the structure of differential or complex variety needs to be constructed, here we're always dealing with the moduli stack/space as an algebro-geometric object and the maps between them are defined by functoriality, thus we never need to define or compare algebraic structures on a manifold.

Under some numerical conditions on $G$ and $d$ it was proved in \cite{FMW2}, independent of Looijenga's result \cite{Looij_ell},  that the moduli space $\cM_G^d$ is isomorphic to a certain weighted projective space.
A shortcoming of our approach is that it doesn't permit us to get this isomorphism without using Looijenga's result. 

We do not address in this paper the existence or the construction of universal bundles since they rarely exist on \emph{moduli spaces}. 
Indeed, the universal bundle on $\cM_{L_d}^{d'}$, if it exists, which is a rather subtle question, doesn't descend to $\cM_G^d$. See also Remarks~\ref{R:univ bdle descends to M_G}, \ref{R:univ bdle on M_L} where a few more details are provided. For a more thorough discussion of universal bundles on $\cM_G^d$ or opens of it we invite the reader to look at \cite{FMW1}. 

\subsection{} Below we introduce the necessary notation and we formulate precisely our main theorem. 

We'll be working over an algebraically closed field $\bk$ of arbitrary characteristic, $E$ is a smooth projective curve of genus one over $\bk$ and $G$ is a reductive group over $\bk$. 
We fix a Borus $T\subset B\subset G$ and we denote by $\bX_*(T)$ the group of cocharacters of $T$. 
For a reductive group $G$, the algebraic fundamental group is defined to be the quotient of the lattice of cocharacters by the lattice of coroots ( see \cite{Hump-algGr}[Humphreys, Section 31.1]). An algebraic group $P$ is an extension of a reductive group by a unipotent subgroup. 
Given that  unipotent groups are (at least topologically, in char. $0$) simply connected, one can define the algebraic fundamental group of a linear group by considering its reductive quotient.

In particular, in the situation that is of interest to us, namely for a parabolic subgroup $B\subset P\subset G$, the algebraic fundamental group is defined by 
\[ \pi_1(P):=\pi_1(M)=\bX_*(T)/\<\valpha \text{ coroot of } M\>_{\bZ}, \]
where $M=P/Rad_u(P)$ is the Levi quotient of $P$. 
We'll denote by $\vlam_P$ an element of $\pi_1(P)$. 

Remark that with this definition of fundamental group we have $\pi_1(\SL_n)=1$,  $\pi_1(\PGL_n) = \bZ/n\bZ$ and  $\pi_1(\GL_n) = \bZ$. Similarly, one can easily see that $\pi_1(\Sp_{2n}) = 1$ and $\pi_1(SO_{2n}) = \bZ/4\bZ$ or $(\bZ/2\bZ)^2$ according to $n$ being odd or even (see for example \cite[Planche I-VIII]{Bour456}).

The choice of $B$ gives us a notion of positive coroots and hence a partial order on the cocharacter lattice $\bX_*(T)$: we say $\vlam\le \vmu$ if $\vmu-\vlam$ is a positive linear combination of positive coroots. It extends naturally to rational coefficients $\bX_*(T)_\bQ$. 

The above partial order induces a partial order on the fundamental group $\pi_1(P)$ for any parabolic subgroup$B\subset P\subset G$ which extends naturally to $\pi_1(P)_\bQ$.

We denote by $\Bun_G^{\sst}$ and by $\cM_G$ the moduli stack, respectively moduli space, of semistable $G$-bundles over $E$. Their connected components are labeled by elements of $\pi_1(G)$, see \cite{Hoff} . We'll write $\Bun_G^{\vlam_G, \sst}$ and $\cM_G^{\vlam_G}$ for such a connected component.  Each such connected component is of finite type. 

In \cite{BalPara} it was proved, under some restrictions on the characteristic of the field, that $\cM_G^{\vlam_G}$ exists as a \emph{normal projective} variety. More precisely, the existence and normality of the moduli space was proved in arbitrary characteristic in \cite{GLSS_G-bdles} (see Section 1.1 Main Theorem). For projectivity, in \cite[Section 1.2] {GLSS_G-bdles} some assumptions on the characteristic of the field was needed. However, Heinloth showed in \cite{Heinl_sstable, Heinl_add_sstable} that the projectivity holds over arbitrary fields.

We have a canonical map $\Bun_G^{\vlam_G,\sst}\to \cM_G^{\vlam_G}$ which identifies two semistable $G$-bundles if their associated polystable $G$-bundles\footnote{For a semistable $G$-bundle $\cF_G$, the associated polystable $G$-bundle is the unique closed point of $\overline{\{\cF_G\}}\subset \Bun_G^{\vlam_G,\sst}$.} are isomorphic and kills all the automorphisms. 

Here are the main results of this paper formulated precisely: 

\begin{thm}
	\label{T:thmGalois covering}Let $\vlam_G\in\pi_1(G)$ be a fixed topological type. Then there exists a Levi subgroup $L=L_{\vlam_G}\subset G$ (unique up to conjugation) and $\vlam_L\in\pi_1(L)$ with the following properties:
	\begin{enumerate}
		\item (\cite{Fratila2016})  the inclusion $L\subset G$ induces a well defined map $\cM_L^{\vlam_L} \to \cM_G^{\vlam_G}$ and all the semistable $L$-bundles in $\cM_L^{\vlam_L}$ are \underline{stable}, in particular the $S$-equivalence relation reduces to isomorphism classes.
		\item $\cM_L^{\vlam_L}\to \cM_G^{\vlam_G}$ is a finite map, generically Galois, with Galois group the relative Weyl group $W_{L,G}=N_G(L)/L$.
		\item the following natural map is an isomorphism
		\[\cM_L^{\vlam_L}/W_{L,G}\simeq \cM_G^{\vlam_G}.\]
	\end{enumerate}
\end{thm}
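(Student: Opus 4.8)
The plan is to treat part (1) as the main theorem of \cite{Fratila2016}, taken as given: it produces the Levi $L$ and the class $\vlam_L\in\pi_1(L)$, guarantees that every semistable $L$-bundle of type $\vlam_L$ is stable, and — crucially — that every polystable $G$-bundle of type $\vlam_G$ admits a reduction to such a stable $L$-bundle. The map in (1) is the extension of structure group $\ind\colon\cF_L\mapsto\cF_L\timesL G$. Since $N_G(L)$ normalises $L$ and twisting an $L$-bundle by $w\in N_G(L)$ does not change its extension to $G$ (conjugation by $w$ is inner in $G$), the morphism $\ind$ is $W_{L,G}$-invariant and descends to $\overline{\ind}\colon\cM_L^{\vlam_L}/W_{L,G}\to\cM_G^{\vlam_G}$. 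Granting (2), part (3) is then formal: $\overline{\ind}$ is finite (quotient of a finite map by a finite group) and birational (its generic fibre being a single $W_{L,G}$-orbit by (2)), and $\cM_G^{\vlam_G}$ is normal, so $\overline{\ind}$ is an isomorphism by Zariski's main theorem. Hence the whole content lies in part (2).

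For finiteness I would combine properness and quasi-finiteness. Both $\cM_L^{\vlam_L}$ and $\cM_G^{\vlam_G}$ are projective, so $\ind$ is proper; and the fibre over a polystable $\cF_G$ is the set of isomorphism classes of reductions of $\cF_G$ to a stable $L$-bundle of type $\vlam_L$, which is finite because such reductions form finitely many $\Aut(\cF_G)$-orbits and stable bundles are rigid. Surjectivity of $\ind$ is precisely the reduction statement recalled above from \cite{Fratila2016}, so $\ind$ is finite and surjective; in particular $\dim\cM_L^{\vlam_L}=\dim\cM_G^{\vlam_G}$.

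The generic étaleness is the serious point and the genuinely positive-characteristic difficulty. I would import the extra symmetry from the introduction: the abelian variety $A:=\cM_{Z(L)}^0$ acts on $\cM_L^{\vlam_L}$ by tensoring with degree-$0$ central bundles, and this action is compatible with the analogous one on the torus side $\cM_C^{\det(\vlam_L)}$ (where $C:=L/[L,L]$), making $\det\colon\cM_L^{\vlam_L}\to\cM_C^{\det(\vlam_L)}$ equivariant. The key input is that the $A$-action is \emph{transitive}, which I would prove by reducing two stable $L$-bundles with the same determinant to a central twist of one another. Comparing the two transitive $A$-actions and computing reduced stabilisers then shows that $\det$ is an isomorphism. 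As $C$ is a torus, $\cM_C^{\det(\vlam_L)}$ is a power of $\Jac(E)$; hence $\cM_L^{\vlam_L}$ is smooth and irreducible of dimension $\dim Z(L)^\circ=\rk G-\rk[L,L]$, and we recover the promised link with line bundles.

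With smoothness of $\cM_L^{\vlam_L}$ in hand, generic étaleness becomes a differential computation. At a bundle $\cF_L$ the differential of $\ind$ is the map $H^1(E,\cF_L\timesL\fl)\to H^1(E,\cF_L\timesL\fg)$ induced by $\fl\hookrightarrow\fg$; from the short exact sequence
\[0\to\cF_L\timesL\fl\to\cF_L\timesL\fg\to\cF_L\timesL(\fg/\fl)\to 0\]
its kernel is the image of the connecting map out of $H^0(E,\cF_L\timesL(\fg/\fl))$. Since $E$ is elliptic, $K_E\cong\cO_E$ and $\fg/\fl\cong\fu\oplus\fu^{-}$ with the opposite nilradicals dual as $L$-modules, so Serre duality gives $H^1(E,\cF_L\timesL(\fg/\fl))\cong H^0(E,\cF_L\timesL(\fg/\fl))^\vee$. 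Consequently the single vanishing $H^0(E,\cF_L\timesL(\fg/\fl))=0$ forces $d\ind$ to be an isomorphism of tangent spaces, and at such a smooth source point over the normal target this makes $\ind$ étale. The hard part — where I expect the real work to be — is to exhibit a \emph{regular} bundle $\cF_L$ realising this vanishing in every characteristic: using the transitive $A$-action one selects a generic central twist so that each weight space of $\fg/\fl$ becomes a nontrivial degree-$0$ line bundle with no global sections. The same genericity makes the $W_{L,G}$-orbit through $\cF_L$ free of size $\lvert W_{L,G}\rvert$, so the generic fibre of $\ind$ is a single free $W_{L,G}$-orbit; together with étaleness this identifies $W_{L,G}$ as the Galois group and completes (2), hence the theorem.
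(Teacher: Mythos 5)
Your skeleton matches the paper's: part (1) is quoted from \cite{Fratila2016}, $W_{L,G}$-invariance comes from the fact that conjugation by $N_G(L)$ is inner in $G$, part (3) follows from (2) by Zariski's main theorem plus normality of $\cM_G^{\vlam_G}$, and generic \'etaleness is reduced to the vanishing $H^0(E,(\fg/\fl)_{\cF_L})=0$ at a bundle produced by a generic central twist, with Riemann--Roch/Serre duality killing $H^1$ for free on a genus-one curve. However, two steps that carry the real weight of part (2) are asserted rather than proved, and they are exactly the places where the paper does work.

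\textbf{First gap: fibres of $\ind$ are $W_{L,G}$-orbits.} You use this twice without justification: once for quasi-finiteness (``reductions form finitely many $\Aut(\cF_G)$-orbits and stable bundles are rigid'' is unsupported), and once at the end, where from freeness of the $W_{L,G}$-orbit through a generic $\cF_L$ you conclude that ``the generic fibre of $\ind$ is a single free $W_{L,G}$-orbit.'' That inference is a non sequitur: freeness of an orbit does not preclude the fibre containing several orbits, in which case the Galois group of $\cM_L^{\vlam_L}/W_{L,G}\to\cM_G^{\vlam_G}$ would be strictly larger than $W_{L,G}$ and the map in (3) would fail to be birational. The paper proves precisely this statement (\cref{L:W-invar-W orbits}): given $\cF_L,\cF'_L$ with $\cF_L\timesL G\simeq\cF'_L\timesL G$, one passes to the induced $P$-bundles, considers their generic relative position $\tilde w\in P\backslash G/P$, invokes Lemmas 3.5 and 3.7 of \cite{Fratila2016} to show the relative position is constant and lies in $N_G(L)/L$, conjugates by $\tilde w$ to reach relative position $1$, deduces $\cF_P\simeq\cF'_P$, and quotients by the unipotent radical to get $\cF_L\simeq\cF'_L$. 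Some argument of this kind is indispensable; without it part (2) is not established.

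\textbf{Second gap: existence of regular bundles in characteristic $p$.} You rightly flag this as the hard part, but the sketch does not go through as stated. First, $\fg/\fl$ is not a sum of one-dimensional weight spaces as an $L$-module (unless $L=T$), so ``each weight space of $\fg/\fl$ becomes a nontrivial degree-$0$ line bundle'' does not parse; the paper instead filters $\fg/\fl$ by highest-weight subquotients and treats each subquotient $W$ separately. Second, and more seriously, to run the genericity argument one needs each associated bundle $W_{\cF_L}$ to be \emph{semistable} of degree zero, so that its degree-zero line subbundles form a finite set which a generic twist by $\chi_{\cF_Z}$ moves off the trivial bundle. In positive characteristic semistability is not preserved under associated constructions; the paper's \cref{L:exist_reg_bd} gets around this via Frobenius semistability (Sun's \cref{L:XS-stg-sem-induced}) combined with the theorem that on genus-one curves semistability and Frobenius semistability coincide (\cref{T:Sun-thm sstb elliptic}), together with the nontriviality of the central characters (the centralizer of $Z(L)$ in $G$ is $L$). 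Your proposal omits this entirely, yet it is the whole positive-characteristic content of the theorem. A minor structural remark: routing through the determinant isomorphism (the paper's \cref{T:det is iso}) to obtain smoothness of $\cM_L^{\vlam_L}$ is legitimate but heavier than needed, and that theorem itself requires the stabilizer computation you only gesture at; the paper's \'etaleness argument uses nothing beyond the regular bundle and Riemann--Roch.
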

\begin{rem}
	In characteristic $0$ the above theorem can be deduced rather easily from our previous result \cite[Theorem 3.2]{Fratila2016}. However, in the course of the proof we prove a technical result (see Lemma~\ref{L:exist_reg_bd}) that allows one to extend the results of \cite{Fratila2016} to arbitrary characteristic.
\end{rem}

\begin{thm}
	\label{T:det is iso}Let $L$ and $\vlam_L$ be as in the previous theorem. The map 
	\begin{equation}
		\det:\cM_L^{\vlam_L}\to \cM_{L/[L,L]}^{\det(\vlam_L)}
	\end{equation}
	is an isomorphism.
\end{thm}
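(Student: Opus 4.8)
The plan is to exploit the \emph{extra symmetry} mentioned in the introduction. Write $C := L/[L,L]$, which is an algebraic torus, and set $A := \cM_{Z(L)}^0$, the abelian variety of degree-zero bundles for the centre $Z(L)$. Since $Z(L)$ is central in $L$, the multiplication $Z(L)\times L\to L$ is a group homomorphism and induces, by twisting, an action of $A$ on $\cM_L^{\vlam_L}$; composing with $Z(L)\to C$ gives an action of $A$ on $\cM_C^{\det(\vlam_L)}$. Because $\det$ is a homomorphism these actions are intertwined, so $\det$ is $A$-equivariant. I would then reduce the theorem to three assertions: (i) $A$ acts transitively on both sides; (ii) $\det$ is finite étale; (iii) its degree is one. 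Note that twisting by a degree-zero central bundle preserves semistability and the type $\vlam_L$, so the action is well defined, and by Theorem~\ref{T:thmGalois covering} every bundle in $\cM_L^{\vlam_L}$ is stable.

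Transitivity on the target is immediate: $Z(L)\to C$ is an isogeny of tori (surjective with finite kernel $F:=Z(L)\cap[L,L]$), hence $A\to\cM_C^0$ is a surjective isogeny of abelian varieties, and $\cM_C^{\det(\vlam_L)}$ is a torsor under $\cM_C^0$. For the source I would argue infinitesimally at a \emph{regular} stable bundle $\cF_0$, whose existence is guaranteed by Lemma~\ref{L:exist_reg_bd}. Decomposing the adjoint bundle as $\ad(\cF_0)=(\mathfrak{z}(\fl)\otimes\cO_E)\oplus\ad_{ss}(\cF_0)$ and using that $\cF_0$ is regular and stable, one gets $H^0(E,\ad(\cF_0))=\mathfrak{z}(\fl)$; since $\chi(\ad_{ss}(\cF_0))=0$ on a genus-one curve this forces $H^1(E,\ad_{ss}(\cF_0))=0$. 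Therefore the tangent map $\mathrm{Lie}(A)=\mathfrak{z}(\fl)\otimes H^1(E,\cO_E)\to H^1(E,\ad(\cF_0))=T_{\cF_0}\cM_L^{\vlam_L}$ of the orbit map is an isomorphism, so the orbit is open. As $A$ is proper the orbit is also closed, and since $\cM_L^{\vlam_L}$ is connected the orbit is everything. Thus $\cM_L^{\vlam_L}\cong A/S_X$ and $\cM_C^{\det(\vlam_L)}\cong A/S_Y$ with $S_Y=\ker(A\to\cM_C^0)$.

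The same tangent computation shows that $d\det$ at $\cF_0$ is the isomorphism $\mathfrak{z}(\fl)\xrightarrow{\ \sim\ }\fl/[\fl,\fl]=\mathrm{Lie}\,C$ induced by $\det$; by $A$-equivariance $\det$ is étale everywhere, and being proper it is finite étale of degree $[S_Y:S_X]$. One inclusion of stabilizers is free: if $\cL\in A$ fixes $\cF_0$ then applying $\det$ and using that $\cM_C^0$ acts freely on the torsor $\cM_C^{\det(\vlam_L)}$ gives $\cL\in\ker(A\to\cM_C^0)=S_Y$, so $S_X\subseteq S_Y$. Since $A$ is abelian and acts transitively, every point of $\cM_L^{\vlam_L}$ has this same stabilizer $S_X$, so it suffices to compute it at the single regular bundle $\cF_0$.

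The main obstacle is the reverse inclusion $S_Y\subseteq S_X$, equivalently the injectivity of $\det$; this is the reductive analogue of Atiyah's observation that a stable vector bundle of coprime rank and degree is preserved by tensoring with torsion line bundles. Concretely, from the exact sequence $H^1(E,F)\to H^1(E,Z(L))\to H^1(E,C)$ every $\cL\in S_Y$ is the pushforward of a torsor under the finite central group $F=Z(L)\cap[L,L]\subseteq[L,L]$, and the claim is that twisting the regular bundle $\cF_0$ by such an $F$-torsor returns an isomorphic bundle. I expect this to be the hardest point: it must use the regularity of $\cF_0$ in an essential way (as in the remark following Theorem~\ref{T:thmGalois covering}, this is exactly where positive characteristic forces one to work with a carefully chosen bundle rather than a generic one), reducing the statement to a rigidity property inside the derived group $[L,L]$. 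Once $S_Y\subseteq S_X$ is established, $\deg\det=[S_Y:S_X]=1$, and a finite étale map of degree one between normal varieties is an isomorphism, completing the proof.
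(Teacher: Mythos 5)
Your framework---equivariance of $\det$ for the $\cM_{Z(L)}^0$-action, transitivity on both sides, \'etaleness, and reduction of the degree count to a comparison of stabilizers---is essentially the paper's architecture, and your easy inclusion $S_X\subseteq S_Y$ matches the paper's easy direction. But there is a genuine gap exactly where you flag it: you never prove the reverse inclusion $S_Y\subseteq S_X$, i.e.\ that twisting a stable $L$-bundle $\cF$ by a degree-zero $Z([L,L])$-bundle $\cL$ returns an isomorphic bundle; you only say you ``expect'' it to follow from a rigidity property. This is the actual core of the paper's proof (the hard direction of Lemma~\ref{L:stab M_Z acts on M_L}), and it is not formal: it is proved by reducing to type $A$ and invoking Atiyah. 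Concretely, Theorem~\ref{T:JH for Gbdles}(5) gives $L^\ad\simeq\prod_i\PGL_{n_i}$ with $\gcd(d_i,n_i)=1$; when $[L,L]$ is simply connected one embeds $L\hookrightarrow \prod_i\GL_{n_i}\times T'$ with the same derived group (Lemma~\ref{L:embed [L,L] s.c.}), descends isomorphism along this embedding using properness and affineness of the quotient (Lemma~\ref{L:iso descends to reductions}), and then the claim \emph{is} Theorem~\ref{T:At-coprime-unique-det}(2) --- a stable vector bundle of coprime rank and degree is fixed by tensoring with torsion line bundles. The case of $[L,L]$ not simply connected is handled by a central extension $1\to T'\to L'\to L\to 1$ with $[L',L']$ simply connected (Lemma~\ref{L:proj to L when [L,L] non s.c.}) together with lifting lemmas for bundles under diagonalizable groups (Theorem~\ref{T:BH-Bun_Z-torsor}, Lemma~\ref{L:diag-grps-surj}). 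Without this chain of reductions your conclusion $\deg\det=1$ is unsupported.

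Moreover, your guess at the missing mechanism is misdirected: regularity of $\cF_0$ (Lemma~\ref{L:exist_reg_bd}) plays no role in Theorem~\ref{T:det is iso}; in the paper it is used only to prove that $\ind:\cM_L^{\vlam_L}\to\cM_G^{\vlam_G}$ is generically \'etale in Theorem~\ref{T:thmGalois covering}. The injectivity of $\det$ uses only stability and the coprime type-$A$ structure above, and holds at \emph{every} stable bundle, not a carefully chosen one. Two smaller remarks. First, your infinitesimal proof of transitivity on the source (orbit open by the tangent computation at $\cF_0$, closed by properness of $A$, hence everything by connectedness) is a genuinely different route from the paper's, which deduces transitivity from $\cM_{\PGL_n}^d=\pt$ (Corollary~\ref{C:M_PGLn=pt}) and the torsor structure of Theorem~\ref{T:BH-Bun_Z-torsor}; your route is plausible but in positive characteristic needs care, since $\cM_{Z(L)}^0$ need not be smooth, so one should run the argument on its reduced identity component, and the tangent computation needs only stability via $\rH^0(E,\fl_\cF)=z(\fl)$ (Corollary~\ref{C:endom are center}), again not regularity. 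Second, comparing stabilizer group \emph{schemes} through their $\bk$-points is insufficient in characteristic $p$; your argument survives only because \'etaleness is established first, so that the degree equals the cardinality of a geometric fiber --- which is precisely how the paper concludes (finite $+$ \'etale $+$ bijective on $\bk$-points $\Rightarrow$ isomorphism).
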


\begin{cor}
	Let $\vlam_G\in\pi_1(G)$ and $L,\vlam_L$ as in Theorem~\ref{T:thmGalois covering}. Then we have
	\[ \cM_G^{\vlam_G}\simeq \cM_{L/[L,L]}^{\det(\vlam_L)}/W_{L,G}. \]
\end{cor}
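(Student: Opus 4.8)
The plan is to deduce the corollary formally from Theorems~\ref{T:thmGalois covering} and \ref{T:det is iso}, the only real content being that the determinant isomorphism of Theorem~\ref{T:det is iso} is equivariant for the relevant actions of the relative Weyl group $W_{L,G}=N_G(L)/L$. Granting this, an equivariant isomorphism descends to an isomorphism of quotients, and combining with Theorem~\ref{T:thmGalois covering}(3) yields
\[
\cM_G^{\vlam_G}\simeq \cM_L^{\vlam_L}/W_{L,G}\simeq \cM_{L/[L,L]}^{\det(\vlam_L)}/W_{L,G}.
\]

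First I would pin down the $W_{L,G}$-action on the target. Conjugation by an element $n\in N_G(L)$ preserves $L$, hence preserves its derived subgroup $[L,L]$, and therefore induces an automorphism of the abelianization $L/[L,L]$. Since conjugation by an element of $L$ itself acts trivially on $L/[L,L]$ (for $\ell,x\in L$ one has $\ell x\ell^{-1}x^{-1}\in[L,L]$), this action factors through $W_{L,G}=N_G(L)/L$. By functoriality of the assignment $H\mapsto\cM_H^\bullet$ it endows $\cM_{L/[L,L]}^{\det(\vlam_L)}$ with a $W_{L,G}$-action, which is the one meant in the statement. The abelianization homomorphism $L\to L/[L,L]$ is by construction $N_G(L)$-equivariant, so applying functoriality once more shows that $\det\colon\cM_L^{\vlam_L}\to\cM_{L/[L,L]}^{\det(\vlam_L)}$ intertwines the $W_{L,G}$-action on the source (the geometric conjugation action underlying Theorem~\ref{T:thmGalois covering}(2),(3)) with the one just described on the target.

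Finally I would pass to the quotient. Both $\cM_L^{\vlam_L}$ and $\cM_{L/[L,L]}^{\det(\vlam_L)}$ are normal projective varieties acted on by the finite group $W_{L,G}$, so their geometric quotients exist and are categorical; an equivariant isomorphism between them therefore induces a canonical isomorphism of the quotients. This gives the second isomorphism displayed above, while the first is exactly Theorem~\ref{T:thmGalois covering}(3), and the corollary follows.

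I expect the genuinely delicate point to be the equivariance check rather than the descent, which is formal: one must be sure that the isomorphism produced in Theorem~\ref{T:det is iso} really is the functorial determinant map and hence respects the conjugation actions, and that the action used in Theorem~\ref{T:thmGalois covering}(3) is indeed this same geometric action of $N_G(L)/L$. Once these identifications are in place the argument is a routine transport of structure through a finite quotient.
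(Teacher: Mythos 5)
Your proposal is correct and matches the paper's intended derivation: the corollary is stated without a separate proof precisely because it follows by combining Theorem~\ref{T:thmGalois covering}(3) with Theorem~\ref{T:det is iso}, transporting the $W_{L,G}$-action through the determinant isomorphism exactly as you do. Your equivariance check (conjugation by $N_G(L)$ descends to $L/[L,L]$ and commutes with the abelianization map, so $\det$ intertwines the two actions) is the right and only nontrivial point, and your descent argument is the standard one.
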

\begin{rem}
	For a torus $Z$ we have $\cM_{Z}^0\simeq\Pic^0(E)\otimes_\bZ \bX_*(Z)$ and we see therefore that $\cM_G^{\vlam_G}$ can be described in terms of line bundles and a Weyl group.
\end{rem}
In particular, this theorem recovers Laszlo's result since for $\vlam_G=0$ the Levi $L_0$ is just the maximal torus. It also recovers the result of Tu because for $G=\GL_n$ and $\vlam_G\equiv d$ we have that $L=(\GL_{n/m})^{m}$ and $W_{L,G}=\fS_m$, where $m:=\gcd(d,n)$. 
It is not possible to compare directly our description of $\cM_G^{\vlam_G}$ with the one of Schweigert \cite{Schw} or Friedman--Morgan--Witten \cite{FM-I, FMW1} since there's no obvious algebraic relationship between $\cM_{L_{\vlam_G}}^{\vlam_L}$ and $\cM_{G_{\vlam_G}}^0$ (in loc.cit. the relation was made through representations of fundamental groups). 
However, one can check easily that the Weyl group of $G_{\vlam_G}$ is the same as our relative Weyl group $W_{L,G}$ and the maximal torus of $G_{\vlam_G}$ corresponds to the center $Z(L_{\vlam_G})$. 
In the case of $\GL_n$ the \emph{isomorphism} between $\Bun_G^{\vlam_G,\sst}(E)$ and $\Bun_{G_{\vlam_G}}^{0,\sst}$ (and their coarse moduli spaces) is provided by Fourier-Mukai transforms. 
It would be very nice to see if one can extend the Fourier-Mukai transforms to more general reductive groups. 
This subject will be discussed elsewhere.

\subsection*{Acknoledgements.} 

I would like to thank the Max Planck Institut für Mathematik in Bonn, where part of this work was done, for providing excellent working conditions.

\section{Preliminaries}
\subsection{Notation} For some notation, see the last paragraph of the introduction. Here are a few more that we'll be using. 
By a $G$-bundle we mean a $G$-torsor in the fppf topology over the scheme/stack in question. Over a curve this is the same as étale $G$-torsors for $G$ a smooth group. If $\cF_G$ is a $G$-bundle over $B$ and $F$ is a quasi-projective variety with a $G$ action (e.g. a representation) then we denote by $F_{\cF_G}=\cF_G\oset{G}{\times}F$ the associated fiber space over $B$ with fiber $F$. In particular, if $V$ is a representation of $G$, we have the associated vector bundle $V_{\cF_G}$.

We'll denote by $X$ a smooth projective curve over $\bk$. 
When we say curve, we always mean a smooth projective curve over $\bk$. 
Some results and definitions make sense for any genus so we'll state them like that.

For an algebraic group $H$ we denote by $\bB H=\pt/H$ the classifying stack of $H$-bundles. 
We denote by $\Bun_G(X)$ the moduli stack of $G$-bundles on $X$ and by $\cM_G(X)$ the corresponding moduli space (existence in arbitrary characteristic is proved in \cite{GLSS_G-bdles}. 
Similarly for the other groups $T,B,P$, etc. When we omit $X$ and write $\Bun_G$ or $\cM_G$ we mean $\Bun_G(E)$ or $\cM_G(E)$ where $E$ is an elliptic curve. 

The algebraic fundamental group of a reductive group is defined to be the quotient 

The connected components of $\Bun_G(X)$ are labeled by $\pi_1(G)$ (see \cite{Hoff}).

Let us begin by giving some definitions and citing some results that we'll be using throughout the paper.

\subsection{The slope map}
Before giving the definition of the slope map let us recall some basic things about cocharacters. 
If $L$ is a reductive group with maximal torus $T$ then the center of $L$ can be described as 
\[ Z(L) = \bigcap_{\alpha\text{ root of } L} \ker(\alpha) \subset T .\]
The natural map
$Z(L)\hookrightarrow T$ induces a map on cocharacters
\[ \bX_*(Z(L))\to \bX_*(T) \to \bX_*(T)/\<\valpha\mid \valpha \text{ coroot of }L\>=\pi_1(L)\]
which upon tensoring by $\bQ$ provides an isomorphism 
\[ \bX_*(Z(L))_\bQ\simeq \pi_1(L)_\bQ. \]

This follows from the following two simple facts: $\pi_1(L)_\bQ = \pi_1(L/[L,L])_\bQ$ and $Z(L)\to L/[L,L]$ is a finite surjective map (of diagonalizable groups). 

\begin{defi}
	[see~\cite{Schi}] For a parabolic subgroup $B\subset P\subset G$ with Levi subgroup $L$ we define the slope map $\phi_P:\pi_1(P)\to \bX_*(T)_\bQ$ as follows
	\[\pi_1(P)\to \pi_1(P)_\bQ\simeq \bX_*(Z(L))_\bQ\to \bX_*(T)_\bQ\]
	where we indicated by a subscript $\bQ$ the tensoring  $\otimes_\bZ\bQ$.
\end{defi}
Let us give some examples of fundamental groups and slopes for a few parabolic subgroups.

If $G=\GL_n$ and $\vlam_i, i=1,\dots,n$ are the coordinate cocharacters of the diagonal matrices then $\pi_1(G)\simeq\bZ \vlam_1$ and $\phi_G(d\vlam_1) = \frac dn(\vlam_1+\dots+\vlam_n)$. 

Continuing the previous example, let $P\le \GL_n$ be the parabolic with blocks of size $(k,n-k)$. Then $\pi_1(P) \simeq \bZ \vlam_k\oplus \bZ\vlam_{k+1}$ and $\phi_P(d\vlam_k+e\vlam_{k+1}) = \frac dk (\vlam_1+\dots+\vlam_k)+\frac{e}{n-k}(\vlam_{k+1}+\dots+\vlam_n)$.

For another example, let $G$ be a groupe of type $G_2$. This group is adjoint and simply connected at the same time. The coroot lattice is generated by $\valpha$ and $\vbeta$. 
The notation of the roots is such that  $\<\alpha,\vbeta\> = -1$ and $\<\beta,\valpha\> = -3$.

Let $P=P_\valpha$ be the parabolic corresponding to the coroot $\valpha$. Then $\pi_1(P) \simeq \bZ\vbeta$ and we can compute $\phi_P(\vbeta) = \frac12 \valpha+\vbeta$.
For the other maximal parabolic $Q=P_\vbeta$ we have $\pi_1(Q) = \bZ\valpha$ and $\phi_Q(\valpha) = \valpha+\frac32 \vbeta$.

The slope map has some very nice properties and we refer the interested reader to \cite{Schi} for a thorough treatment.

\subsection{Semistability} 
\begin{defi}
	\label{D:reduction-str grp} Let $H\subset K$ be a pair of algebraic groups and let $\cF_K\to Y$ be a $K$-bundle over $Y$. 
	A reduction of $\cF_K$ to $H$ is a couple $(\cF_H,\theta)$ of an $H$-bundle and an isomorphism $\theta:\cF_H\oset{H}{\times} K\simeq \cF_K$. 
	Two reductions $(\cF_H,\theta),(\cF'_H,\theta')$ are equivalent if there is an isomorphism of $H$-bundles $\cF_H\to \cF_H'$ such that its extension to $K$ composed with $\theta'$ is equal to $\theta$.
\end{defi}	
\begin{rem}
	\label{R:reduction-section}To give a reduction of a $K$-bundle $\cF_K$ to $H$ is the same as to give a section of $\cF_K/H\to Y$. 
	Two such sections give equivalent reductions if and only if there exists an automorphism $\sigma\in\Aut(\cF_K)$ translating one into the other.
\end{rem}
\begin{rem}
	For example, if $K=\GL_n$ and $H$ is the subgroup of upper-triangular matrices, then to give a reduction to $H$ of a rank $n$ vector bundle  (i.e. a $\GL_n$-bundle) is the same as to give a filtration of it with sub-quotients being line bundles.
\end{rem}
The following definition of semistability for $G$-bundles is from \cite{Schi} where it is also proved the equivalence with the Ramanathan's semistability.

\begin{defi}
	A $G$-bundle $\cF_G$ of degree $\vlam_G$ over a smooth projective curve $X$ is (semi)stable if for any proper parabolic subgroup $P\subset G$ and for any reduction $\cF_P$ of $\cF_G$ to $P$ of degree $\vlam_P$ we have 
	\[\phi_P(\vlam_P)\underset{(\le)}{<} \phi_G(\vlam_G).\]
\end{defi}

\begin{prop}\label{P:slope assoc vbdl}\cite[Proposition 3.2 (b)]{Schi} 
	 If $V$ is a highest weight representation of $G$ of highest weight $\lambda$ and $\cF_G$ is a $G$-bundle of degree $\vlam_G$ over a curve $X$ then the slope (i.e degree divided by rank) of the associated vector bundle $V_{\cF_G}$ is 
	\[  \mu(V_{\cF_G})=\<\phi_G(\vlam_G),\lambda\>. \]
\end{prop}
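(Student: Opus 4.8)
The statement to prove is Proposition~\ref{P:slope assoc vbdl}: if $V$ is an irreducible representation of $G$ with highest weight $\lambda$ and $\cF_G$ is a $G$-bundle of degree $\vlam_G$, then the slope of the associated vector bundle $V_{\cF_G}$ equals $\<\phi_G(\vlam_G),\lambda\>$. The guiding principle is that the slope of $V_{\cF_G}$ should depend only on the topological type $\vlam_G\in\pi_1(G)$ and be \emph{linear} in it, so that it suffices to pin down a linear functional on $\pi_1(G)_\bQ$ and check it on enough classes. The first thing I would do is reduce to the torus. Choosing a Borel $B\supset T$, any $G$-bundle admits (étale-locally, or after the usual genericity argument) a reduction to $B$, and the degree of the $G$-bundle is computed from the degree of this $B$-reduction; moreover the slope of an associated vector bundle is insensitive to semistability subtleties because degree and rank are both additive in short exact sequences and multiplicative/additive under the operations below. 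So I would first establish that both sides of the claimed identity are linear in $\vlam_G$ and factor through $\pi_1(G)$, reducing the computation to the case of a $T$-bundle, i.e. to characters and cocharacters where everything is explicit.

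\textbf{Key steps.}
First I would record the two operations under which slope behaves linearly. For the \emph{rank}: $\dim V$ is fixed, and under restriction to $T$ the representation $V$ decomposes into weight spaces $V=\bigoplus_{\mu}V_\mu$ indexed by weights $\mu\in\bX^*(T)$. For a $T$-bundle $\cF_T$ of degree $\vlam_T\in\bX_*(T)$, the associated line bundle $(\bk_\mu)_{\cF_T}$ attached to the weight $\mu$ has degree $\<\vlam_T,\mu\>$ (the natural pairing $\bX_*(T)\times\bX^*(T)\to\bZ$), since extending structure group along the character $\mu\colon T\to\Gm$ sends a degree-$\vlam_T$ $T$-bundle to a degree-$\<\vlam_T,\mu\>$ line bundle. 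Hence
\[
\deg\bigl(V_{\cF_T}\bigr)=\sum_{\mu}\dim(V_\mu)\,\<\vlam_T,\mu\>,\qquad
\rk\bigl(V_{\cF_T}\bigr)=\sum_{\mu}\dim(V_\mu)=\dim V.
\]
Second, I would use that the slope of $V_{\cF_G}$ is a \emph{group-theoretic} quantity: it only sees the image of $\vlam_T$ under $\bX_*(T)\to\pi_1(G)$, because the character sum $\sum_\mu\dim(V_\mu)\<\vlam_T,\mu\>$ is invariant under shifting $\vlam_T$ by a coroot $\valpha$. Indeed $V$ is a representation of $G$, so its set of weights (with multiplicities) is stable under the Weyl group, and in particular the weighted centroid $\tfrac{1}{\dim V}\sum_\mu\dim(V_\mu)\,\mu$ is a $W$-invariant element of $\bX^*(T)_\bQ$; pairing a coroot $\valpha$ against a $W$-invariant class gives zero. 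This is exactly what makes the slope descend to $\pi_1(G)$ and justifies feeding $\vlam_G$ rather than a chosen lift into the formula. Third, I would identify the resulting linear functional with the right-hand side: the slope is
\[
\mu\bigl(V_{\cF_T}\bigr)=\Bigl\<\vlam_T,\ \tfrac{1}{\dim V}\!\sum_{\mu}\dim(V_\mu)\,\mu\Bigr\>,
\]
and the weighted centroid of the weights of an irreducible representation of highest weight $\lambda$ is precisely $\lambda$ projected into $\bX^*(T)_\bQ^{W}\simeq\bX^*(Z(G)^\circ)_\bQ$, which is the dual incarnation of the slope map $\phi_G$. Dualizing, pairing $\vlam_T$ against this centroid is the same as pairing $\phi_G(\vlam_G)$ against $\lambda$, giving $\mu(V_{\cF_G})=\<\phi_G(\vlam_G),\lambda\>$.

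\textbf{Main obstacle.}
The routine parts are the additivity of degree and rank and the computation $\deg((\bk_\mu)_{\cF_T})=\<\vlam_T,\mu\>$ for a torus. The genuine content—and the step I expect to be the crux—is the identification of the weighted centroid $\tfrac{1}{\dim V}\sum_\mu\dim(V_\mu)\mu$ of the weights of the irreducible $V$ with the image of $\lambda$ under the projection to the $W$-invariants $\bX^*(T)^W_\bQ$, compatibly with the definition of $\phi_G$ through $\pi_1(G)_\bQ\simeq\bX_*(Z(L))_\bQ$. This is a purely Lie-theoretic normalization statement: it amounts to checking that the ``average weight'' of a highest-weight representation is its central character, equivalently that $\lambda$ and the centroid agree after projecting away the semisimple directions. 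I would verify it by noting that $W$ acts transitively on the orbit of each weight and that the $W$-orbit sums lie in the root lattice translated to be centered at the centroid, so the only surviving component is the $W$-invariant (central) part of $\lambda$; matching this against the normalization built into $\phi_P$ in the Definition above completes the proof. The characteristic of $\bk$ plays no role here, since weight multiplicities, the pairing $\bX_*\times\bX^*\to\bZ$, and degrees of line bundles are all characteristic-independent.
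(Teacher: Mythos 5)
Your proof is correct. Note that the paper itself gives no argument for this statement: it is quoted directly from Schieder \cite[Proposition 3.2 (b)]{Schi}, so there is no internal proof to compare against; your argument is the standard one and is sound in arbitrary characteristic, since it only uses that the multiset of weights of a finite-dimensional $G$-module is $W$-invariant and that every weight of a highest weight module lies in $\lambda$ minus the root lattice (no irreducibility or characteristic-zero input). Two small points of hygiene. First, your parenthetical ``étale-locally, or after the usual genericity argument'' is not quite what you need: an étale-local $B$-reduction does not compute degrees; you need a \emph{global} $B$-reduction of $\cF_G$ over $X$, which does exist here because $\bk$ is algebraically closed (Steinberg--Tsen gives a rational section of $\cF_G/B$, which extends to all of $X$ by properness of $G/B$ and smoothness of the curve; equivalently, Drinfeld--Simpson), and the degree $\vlam_T\in\bX_*(T)=\pi_1(B)$ of that reduction maps to $\vlam_G$ under $\pi_1(B)\to\pi_1(G)$ by functoriality of the topological type. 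Second, your crux step is cleanly justified by the two dual orthogonality relations: writing $\vlam_T=\vlam^W+\vlam^r$ and $\lambda=\lambda^W+\lambda^r$ for the decompositions of $\bX_*(T)_\bQ$ and $\bX^*(T)_\bQ$ into $W$-invariants plus the span of coroots (resp.\ roots), the centroid $c=\tfrac{1}{\dim V}\sum_\mu \dim(V_\mu)\,\mu$ is $W$-invariant and satisfies $\lambda-c\in\bQ\<\text{roots}\>$, so $c=\lambda^W$, whence
\[
\mu(V_{\cF_G})=\<\vlam_T,c\>=\<\vlam^W,\lambda^W\>=\<\phi_G(\vlam_G),\lambda\>,
\]
since coroots pair to zero with $W$-invariant characters and $W$-invariant cocharacters pair to zero with roots. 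With these two clarifications your argument is complete.
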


\subsection{Frobenius semistability} In case $\bk$ is of characteristic $p$, there is a stronger notion of stability, called Frobenius semistability and it behaves better with respect to associated vector bundles.
 
Denote by $F_X:X\to X$ the absolute Frobenius: it is the identity at the level of topological spaces and raising to the power $p$ at the level of functions. 
\begin{defi}
	A $G$-bundle $\cF_G$ is Frobenius semistable if $(F^n)^*(\cF_G)$ is semistable for all $n\ge 0$.
\end{defi}

In characteristic zero we have the following remarkable property: the tensor product of two semistable vector bundles of the same slope is again semistable. 
The correct analogue in characteristic $p$ is the following:

\begin{lemma}
	\label{L:XS-stg-sem-induced}\cite[Corollary 1.1]{Sun_sstab} 
	Let $\cF_G$ be a \emph{Frobenius semistable} $G$-bundle over a smooth projective curve $X$ and let $f:G\to G'$ be a morphism of reductive groups such that $f(Z(G))\subset Z(G')$. Then the induced $G'$-bundle is also Frobenius  semistable. In particular, if $V$ is a representation of $G$ such that the center of $G$ acts by a character, the induced vector bundle $V_{\cF_G}$ is semistable.
\end{lemma}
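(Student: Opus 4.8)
Since the lemma is quoted verbatim from Sun, I will sketch how one would reconstruct its proof. The plan is to combine the canonical (instability) reduction of a principal bundle with the fact that, in contrast to ordinary semistability, strong semistability is preserved by tensor operations in positive characteristic.

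\textbf{Reductions.} First I would record two formal reductions. By definition $\cF_G$ is Frobenius semistable exactly when every pullback $(F_X^n)^*\cF_G$ is semistable, and extension of structure group commutes with Frobenius pullback,
\[(F_X^n)^*(\cF_G\timesG G')\simeq \bigl((F_X^n)^*\cF_G\bigr)\timesG G',\]
since the associated-bundle construction is functorial and $F_X$ is a $\bk$-morphism. Hence it suffices to prove the single implication: if $\cF_G$ is strongly semistable and $f(Z(G))\subset Z(G')$, then $\cF_G\timesG G'$ is semistable. Applying this to each $(F_X^n)^*\cF_G$, which is again strongly semistable, returns the Frobenius semistability of the induced bundle. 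Finally, the last sentence of the lemma is the special case $G'=\GL(V)$: the hypothesis that $Z(G)$ acts on $V$ through a character is precisely $f(Z(G))\subset Z(\GL(V))=\Gm$, and Frobenius semistability of a $\GL(V)$-bundle is the Frobenius, hence ordinary, semistability of the vector bundle $V_{\cF_G}$.

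\textbf{Core argument.} For the remaining implication I would argue by contradiction. Suppose $\cF_{G'}:=\cF_G\timesG G'$ is not semistable. By the theory of the canonical reduction (Behrend, Ramanathan) it admits a unique reduction to a proper parabolic $P'\subsetneq G'$ which is destabilizing, i.e.\ violates the defining inequality $\phi_{P'}(\vlam_{P'})<\phi_{G'}(\vlam_{G'})$, and which by uniqueness is canonically attached to $\cF_{G'}$. The role of the hypothesis $f(Z(G))\subset Z(G')$ is to make the central normalisation implicit in the slope map compatible with $f$, so that this canonical reduction of $\cF_{G'}$ is \emph{induced} from a canonical parabolic reduction of $\cF_G$; equivalently, passing to a faithful representation $W$ of $G$, the Harder--Narasimhan filtration of $W_{\cF_G}$ produces a subsheaf of strictly larger slope whose stabiliser is a parabolic reduction of $\cF_G$ itself.

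\textbf{The characteristic-$p$ obstacle.} In characteristic $0$ this destabilizing reduction of $\cF_G$ would immediately contradict its semistability, the degree of the destabilizing subsheaf being computed by the slope formula of Proposition~\ref{P:slope assoc vbdl}. In characteristic $p$ the relevant degree computation acquires a Frobenius correction term, and indeed Gieseker's example shows that the tensor product of semistable bundles may fail to be semistable, so the naive argument breaks down; this is the heart of the matter and the step I expect to be the main obstacle. It is repaired exactly by the strong-semistability hypothesis: because the canonical reduction commutes with Frobenius pullback, the destabilizing data persists on every $(F_X^n)^*\cF_G$, contradicting that all these pullbacks are semistable. The clean way to package this, which I would follow, is Langer's theorem that strongly semistable bundles of slope $0$ form an abelian tensor category closed under duals and subquotients: after normalising the central character to slope $0$, the bundle $W_{\cF_G}$ attached to a faithful $W$ is strongly semistable, hence so is $(W^{\otimes a}\otimes (W^{*})^{\otimes b})_{\cF_G}$ and each of its subquotients; since every irreducible representation of $G$ with the given central character occurs in such a tensor space, every $V_{\cF_G}$ is strongly semistable, which is precisely what we needed.
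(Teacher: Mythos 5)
The paper offers no proof of this lemma: it is quoted as \cite[Corollary 1.1]{Sun\_sstab}, so your reconstruction must be measured against the argument in the literature (Sun's corollary rests on Ramanan--Ramanathan's instability-flag theorem). Your formal reductions are correct: Frobenius pullback commutes with extension of structure group, and the vector-bundle statement is the case $G'=\GL(V)$. But your ``core argument'' has a genuine gap: the claim that the canonical (Harder--Narasimhan) reduction of $\cF_{G'}=\cF_G\timesG G'$ is induced from a parabolic reduction of $\cF_G$ is unjustified and false in general. The generic stabilizer in $G$ of the destabilizing flag of an associated bundle is the intersection of a parabolic of $G'$ with the image of $G$, and such an intersection need not be a parabolic subgroup of $G$; the hypothesis $f(Z(G))\subset Z(G')$ does not repair this, it only enters the slope bookkeeping. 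This failure is exactly why the result is nontrivial even in characteristic $0$. The actual proof runs through Kempf--Rousseau instability theory: if the associated bundle is unstable, Kempf's \emph{optimal} destabilizing parabolic at the generic point of $X$ is unique, hence invariant under all automorphisms fixing the base field, but it is a priori defined only over a purely inseparable extension of the function field; pulling back by a sufficiently high power of Frobenius makes it rational over $X$, producing a destabilizing reduction of $(F_X^n)^*\cF_G$ and contradicting strong semistability. Your sentence ``the destabilizing data persists on every $(F_X^n)^*\cF_G$'' has this logic backwards: Frobenius pullback is what makes the destabilizing datum exist as a reduction over $X$ at all; it is not used to propagate an already existing one.

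Your closing packaging via Langer's tensor-category theorem is, moreover, circular. To run it you must first know that $W_{\cF_G}$ is strongly semistable for one faithful representation $W$ --- but that is an instance of the very lemma being proven: strong semistability of $\cF_G$ is a condition on parabolic reductions of $\cF_G$ and hands you no semistable associated vector bundle for free. The tensor-product theorem propagates semistability from one representation to subquotients of tensor powers of it and its dual, but it cannot supply the initial input; only the instability-flag argument does. (For $G$ a product of $\GL_{n_i}$'s the defining representation does give this input, since parabolic reductions of a $\GL_n$-bundle are exactly filtrations of the underlying vector bundle, so your last paragraph would work there; but the lemma, and its use in the paper for Levi subgroups $L$ with $L^{\ad}=\prod_i\PGL_{n_i}$, concerns general reductive groups, where no such identification is available.)
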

This result is relevant to us because of the following theorem:
\begin{thm}\label{T:Sun-thm sstb elliptic}\cite[Theorem 2.1]{Sun_sstab}
	For curves of genus one semistability and Frobenius semistability are equivalent notions.	
\end{thm}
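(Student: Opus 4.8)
The plan is to observe that the statement has content only in characteristic $p>0$ (in characteristic $0$ there is no Frobenius twisting and the two notions coincide tautologically), and then to prove the one nontrivial implication: semistability implies Frobenius semistability. Since $(F^n)^*=(F^*)^n$ for the absolute Frobenius $F=F_E$, and the case $n=0$ of Frobenius semistability returns ordinary semistability, it suffices to show that $F^*$ preserves semistability and then to iterate. I would first treat $G=\GL_n$, i.e. vector bundles, which already contains the whole mechanism, and afterwards explain the passage to a general reductive $G$.

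So let $V$ be a semistable vector bundle on $E$ and suppose, for contradiction, that $F^*V$ is not semistable. The key tool is the canonical connection $\nabla$ on $F^*V$ characterized by $(F^*V)^\nabla=V$; it has $p$-curvature zero, and by Cartier's theorem a subbundle of $F^*V$ is horizontal for $\nabla$ if and only if it descends along $F$ to a subbundle of $V$. Let $W\subset F^*V$ be the maximal destabilizing subbundle, so that $W$ is semistable of slope $\mu_{\max}(F^*V)=\mu_1$ while the quotient $F^*V/W$ has maximal slope $\mu_2<\mu_1$. I then distinguish two cases according to whether $W$ is horizontal. If $W$ is horizontal, it descends to $W_0\subset V$ with $F^*W_0=W$; then $\deg W=p\deg W_0$ forces $\mu(W_0)=\mu_1/p>\mu(F^*V)/p=\mu(V)$, contradicting semistability of $V$, and this step uses no hypothesis on the genus.

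If instead $W$ is not horizontal, the second fundamental form, namely the $\cO_E$-linear composite $W\hookrightarrow F^*V\xrightarrow{\nabla} F^*V\otimes\Omega^1_E\twoheadrightarrow (F^*V/W)\otimes\Omega^1_E$, is nonzero. Here is where genus one enters decisively: $\Omega^1_E\cong\cO_E$, so this is a nonzero $\cO_E$-linear map $\psi\colon W\to F^*V/W$. Its image is a quotient of the semistable bundle $W$, hence of slope $\ge\mu_1$, and simultaneously a subsheaf of $F^*V/W$, hence of slope $\le\mu_2$; thus $\mu_1\le\mu_2$, a contradiction. Either way $F^*V$ is semistable, and iterating gives Frobenius semistability.

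For a general reductive $G$ one runs the same dichotomy with the canonical (Behrend) reduction in place of the Harder--Narasimhan filtration, arguing by induction on the semisimple rank. If $F^*\cF_G$ is not semistable, let $\cF_P$ be its canonical reduction to a parabolic $P$ with Levi $L$; then $\cF_L$ is a semistable $L$-bundle and the destabilizing property says $\phi_P(\vlam_P)$ is dominant and regular in the relevant cone. The obstruction to $\cF_P$ being $\nabla$-horizontal is the second fundamental form, a section of $(\fg/\fp)_{\cF_P}\otimes\Omega^1_E$; since $\Omega^1_E\cong\cO_E$, I need only that $(\fg/\fp)_{\cF_P}$ has strictly negative maximal slope, whence the section vanishes, $\cF_P$ descends, and the $p$-fold scaling of degrees (hence of the values of $\phi_P$) contradicts semistability of $\cF_G$ as before. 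I expect the verification of $\mu_{\max}((\fg/\fp)_{\cF_P})<0$ to be the main obstacle in positive characteristic: this is where one invokes the inductive hypothesis for the proper Levi $L$ to get that $\cF_L$ is Frobenius semistable, then Lemma~\ref{L:XS-stg-sem-induced} to deduce semistability of the associated bundle, and finally Proposition~\ref{P:slope assoc vbdl} to identify its slopes along the root spaces with the pairings $\langle\phi_P(\vlam_P),\alpha\rangle<0$. The vector-bundle heart of the argument is robust, and genus one is used in exactly one place, to make the obstruction term, which in general lives in a twist by $\Omega^1_E$, vanish identically.
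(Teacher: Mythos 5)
The paper offers no proof of this statement at all: it is imported verbatim from Sun (\cite[Theorem 2.1]{Sun_sstab}), so there is no internal argument to compare yours against. What you have written is, in essence, the standard proof of Sun's theorem, and it is correct. The vector-bundle half is complete: the canonical $p$-curvature-zero connection on $F^*V$, Cartier descent in the horizontal case (which, as you note, is genus-free), and the $\cO_E$-linear second fundamental form in the non-horizontal case, with genus one entering only through $\Omega^1_E\simeq\cO_E$, giving the contradiction $\mu_1\le\mu_2$.

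The general-$G$ half is a sketch, but the outline is sound and the ingredients you invoke are the right ones; to make it airtight you should state explicitly: (i) the canonical reduction exists over any field and satisfies $\langle\phi_P(\vlam_P),\alpha\rangle>0$ for every root $\alpha$ of $\fu_P$, hence every weight of $\fg/\fp$ pairs strictly negatively with $\phi_P(\vlam_P)$; (ii) Cartier descent is applied here to a parabolic reduction, i.e.\ to a horizontal section of $F^*(\cF_G/P)$, not merely to a subbundle, which requires the torsor/foliation form of the descent theorem; (iii) the filtration of $\fg/\fp$ must be refined into $P$-stable pieces whose subquotients are $L$-modules with $Z(L)$ acting by a character, so that \cref{L:XS-stg-sem-induced} and \cref{P:slope assoc vbdl} combine to give $\mu_{\max}((\fg/\fp)_{\cF_P})<0$ --- this is exactly the device the paper itself uses for $\fg/\fl$ in the proof of \cref{L:exist_reg_bd}. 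Finally, your use of \cref{L:XS-stg-sem-induced} inside the induction on semisimple rank is not circular, since Sun's Corollary 1.1 is proved for arbitrary curves independently of the genus-one theorem.
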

These two put together give

\begin{cor}\label{C:induced is semistable}
	Let $\cF_G$ be a semistable $G$-bundle over an elliptic curve and let $V$ be a representation of $G$ such that the center of $G$ acts by a character. Then the vector bundle $V_{\cF_G}$ is semistable.
\end{cor}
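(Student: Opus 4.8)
The plan is simply to chain together the two results of Sun recalled immediately above; the corollary is a formal consequence, and no new ingredient is needed. Let $\cF_G$ be a semistable $G$-bundle on the elliptic curve $E$. Since $E$ has genus one, Theorem~\ref{T:Sun-thm sstb elliptic} tells us that semistability and Frobenius semistability coincide, and hence $\cF_G$ is in fact \emph{Frobenius} semistable.

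Next I would unwind the meaning of the representation hypothesis. To say that $V$ is a representation of $G$ on which $Z(G)$ acts through a character is precisely to say that the structure morphism $f\colon G\to\GL(V)$ satisfies $f(Z(G))\subset\Gm\cdot\id=Z(\GL(V))$. Thus the hypotheses of Lemma~\ref{L:XS-stg-sem-induced} are met with $G'=\GL(V)$, and the associated vector bundle $V_{\cF_G}$ is by definition the $\GL(V)$-bundle induced from $\cF_G$ along $f$. The lemma then gives that $V_{\cF_G}$ is Frobenius semistable, and in particular semistable, as desired.

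The only point that requires care --- and it is exactly the point the hypothesis is designed to handle --- is that $Z(G)$ must act on $V$ by a character, so that $f$ carries the center of $G$ into the center of $\GL(V)$; without this, the induced map would not respect centers and Sun's lemma could not be applied. I do not expect any genuine obstacle: the entire mathematical content is imported from the two cited theorems, and the genus-one hypothesis enters only through Theorem~\ref{T:Sun-thm sstb elliptic} (in characteristic zero one could instead invoke the classical fact that the bundle associated to a semistable $G$-bundle via such a representation is semistable, bypassing the Frobenius detour altogether).
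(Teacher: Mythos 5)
Your proof is correct and is exactly the paper's argument: the paper derives this corollary by combining Theorem~\ref{T:Sun-thm sstb elliptic} (on genus-one curves semistability equals Frobenius semistability) with Lemma~\ref{L:XS-stg-sem-induced} (Sun's result for center-preserving morphisms), which is precisely your chain of reasoning, including the observation that the central-character hypothesis means $f(Z(G))\subset Z(\GL(V))$.
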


The above Corollary is crucially used in the proof of \cref{L:exist_reg_bd}.

\subsection{Jordan-H\"older series}
In the case of vector bundles it makes sense to talk about the category of semistable vector bundles of fixed slope. 
This is a finite length category so we can also talk about Jordan-Hölder series. To give a filtration of a vector bundle is the same as to give a reduction of the corresponding $\GL_n$-bundle to a certain parabolic subgroup. In general, the Jordan-Hölder series has no reason to have the same slopes of the graded parts when the vector bundle varies. However, this is a particularity of elliptic curves. Namely, it can be extracted from Atiyah's paper \cite{Ati} that for semistable vector bundles of rank $n$ and degree $d$ there is a (unique up to conjugation) parabolic subgroup such that all the semistable vector bundles of rank $n$ and degree $d$ admit a reduction to it and moreover the graded parts are stable vector bundles of equal slope. For example, for slope $0$, all semistable vector bundles are extensions of degree zero line bundles.

The following is an analogue for any reductive group $G$ and any degree $\vlam_G$.

\begin{thm}
\label{T:JH for Gbdles}\cite[Lemma 2.12, Theorem 3.2, Corollary 4.2]{Fratila2016} 
	Let $\vlam_G\in \pi_1(G)$ and consider $\Bun_G^{\vlam_G,\sst}$ the stack of semistable $G$-bundles of degree $\vlam_G$ on an elliptic curve $E$. Then there exists a unique (up to conjugation) parabolic subgroup $P$ and a unique $\vlam_P\in\pi_1(P)$ such that 
	\begin{enumerate}
		\item $\phi_G(\vlam_G)=\phi_P(\vlam_P)$,
		\item every semistable $G$-bundle of degree $\vlam_G$ has a reduction to $P$ of degree $\vlam_P$,
		\item the map
		\[\Bun_P^{\vlam_P,\sst}\to\Bun_G^{\vlam_G,\sst}\]
		is proper, generically Galois with Galois group $W_{L,G}=N_G(L)/L$ where $L$ is the Levi subgroup of $G$.
		\item for any $\cF_P\in\Bun_P^{\vlam_P,\sst}$ the induced $L$-bundle is stable.
		\item (\cite[Corollary 4.3]{Fratila2016}) For a reductive group $L$ and $\vlam_L\in\pi_1(L)$ there exist stable $L$-bundles of degree $\vlam_L$ if and only if $L^\ad=\prod_i \PGL_{n_i}$ and $\vlam_L^\ad\equiv (d_i)_i$ with $\gcd(d_i,n_i)=1,~\forall i$.			
	\end{enumerate}
\end{thm}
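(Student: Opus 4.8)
The plan is to build the parabolic directly from the slope cocharacter and then to deduce the five assertions by reducing everything to Atiyah's classification of (semi)stable sheaves on $E$, using Frobenius semistability to keep the argument uniform in the characteristic. Set $\nu:=\phi_G(\vlam_G)\in\bX_*(T)_\bQ$. This rational cocharacter determines a standard parabolic $P=P_\nu$ (generated by $B$ and the root subgroups pairing non-negatively with $\nu$) with Levi $L=Z_G(\nu)$, well defined up to conjugacy, and I would produce $\vlam_P\in\pi_1(P)=\pi_1(L)$ lifting $\vlam_G$ with $\phi_P(\vlam_P)=\nu$. Since $\nu$ is central in $L$, this forces the image $\vlam_L^{\ad}$ in $\pi_1(L^{\ad})$ to be of slope zero, and I pin $\vlam_P$ down by further demanding that stable $L$-bundles of degree $\vlam_L$ exist. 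Assertion (1) then holds by construction, and the uniqueness of $(P,\vlam_P)$ becomes exactly the content of assertion (5).

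For assertion (5), a stable $L$-bundle $\cF_L$ is one whose reduced automorphism group scheme is $Z(L)$, equivalently $H^0(\fl_{\cF_L})=\mathrm{Lie}\,Z(L)$, where $\fl_{\cF_L}$ is the $L$-adjoint bundle. The center $Z(L)$ acts trivially on $\mathrm{Lie}(L)$, so \cref{C:induced is semistable} applies and shows $\fl_{\cF_L}$ is semistable of slope $0$; its nontrivial isotypic summands are the associated bundles of the root spaces, each semistable of slope $\langle\nu,\alpha\rangle=0$ on $E$. Requiring that none of these contributes a global section, and translating this into Atiyah's criterion that a stable bundle of rank $r$ and degree $d$ exists on $E$ precisely when $\gcd(r,d)=1$, forces the root-system combinatorics to collapse to $L^{\ad}=\prod_i\PGL_{n_i}$ with $\vlam_L^{\ad}\equiv(d_i)_i$ and $\gcd(d_i,n_i)=1$; this is the numerical input of \cite[Corollary 4.3]{Fratila2016}.

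For assertions (2) and (4) I would first treat the polystable case. For a polystable $\cF_G$, \cref{C:induced is semistable} makes all associated bundles semistable of the slopes prescribed by $\nu$, and on $E$ each such bundle is a direct sum of stable bundles of a single numerical type (Atiyah), so the socle type is constant. In Tannakian terms this polystable structure \emph{is} a reduction to $L$ whose induced $L$-bundle is stable of degree $\vlam_L$. For a general semistable $\cF_G$, its associated polystable bundle $\gr(\cF_G)$ reduces to $L$ as above, and I would lift this reduction through the canonical filtration to a reduction of $\cF_G$ itself to $P$ of degree $\vlam_P$; the induced $L$-bundle is $\gr$ of the $L$-reduction, hence stable. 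This establishes (2) and (4) at once.

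For assertion (3), properness of $\Bun_P^{\vlam_P,\sst}\to\Bun_G^{\vlam_G,\sst}$ follows from the valuative criterion: a reduction is a section of the $G/P$-bundle $\cF_G/P\to E$ with $G/P$ projective, and the slope identity (1) prevents any destabilization under specialization. Restricting to the regularly stable locus, such a $\cF_G$ is induced from a stable $L$-bundle with pairwise non-isomorphic graded pieces, so its $P$-reductions are in bijection with the orderings of those pieces, i.e. with $N_G(L)/L=W_{L,G}$; the map is thus generically a $W_{L,G}$-torsor, hence generically Galois. \textbf{The main obstacle} is the reduction step (2)/(4) in positive characteristic: in characteristic zero one freely uses that tensor products of semistable sheaves of equal slope are semistable, which makes the socle analysis and the descent of the $L$-reduction immediate, whereas in characteristic $p$ this fails and the argument instead rests on \cref{L:XS-stg-sem-induced}, which keeps associated bundles Frobenius semistable, together with \cref{T:Sun-thm sstb elliptic}, which collapses Frobenius semistability back to ordinary semistability on the elliptic curve. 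Securing the constancy of the Jordan--H\"older type and the stability of the induced $L$-bundle uniformly in the characteristic is where the real work lies.
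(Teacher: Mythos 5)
Your opening construction is fatally flawed, and everything downstream leans on it. By the paper's definition of the slope map, $\nu:=\phi_G(\vlam_G)$ is the image of $\vlam_G$ under $\pi_1(G)\to\pi_1(G)_\bQ\simeq\bX_*(Z(G))_\bQ\to\bX_*(T)_\bQ$, so it is a \emph{central} rational cocharacter of $G$: $\langle\nu,\alpha\rangle=0$ for every root $\alpha$. Hence $Z_G(\nu)=G$ and $P_\nu=G$, and your recipe outputs the trivial pair $(G,\vlam_G)$ for every topological type. The case $\vlam_G=0$ makes the failure explicit: there $\nu=0$ and your construction gives $L=G$, whereas the theorem (this is exactly Laszlo's case) has $L=T$, the maximal torus; likewise for $G$ semisimple $\pi_1(G)$ is torsion, so $\phi_G\equiv 0$ and your $P_\nu$ is always $G$, while the table in the Appendix lists proper Levi subgroups for all $\vlam_G\neq 0$. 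You appear to have conflated the Harder--Narasimhan parabolic of an \emph{unstable} bundle (where the HN cocharacter is genuinely non-central) with the Jordan--H\"older parabolic of this theorem. Condition (1) itself signals that no such approach can work: it forces $\phi_P(\vlam_P)$ to equal the central cocharacter $\phi_G(\vlam_G)$, so the slope can never detect $P$. What actually pins down $P$ in \cite{Fratila2016} is arithmetic, not the slope: $P$ is the (unique up to conjugation) parabolic admitting a lift $\vlam_P$ of $\vlam_G$ of equal slope whose Levi degree supports \emph{stable} $L$-bundles, and the existence criterion (5) --- hence the shape $L^\ad=\prod_i\PGL_{n_i}$ with the coprimality conditions --- is established there by case-by-case inspection of the root systems, not by the formal ``no sections of root-space bundles'' argument you sketch. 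With the foundation gone, your treatments of (2) and (4) (a Tannakian reduction of the polystable bundle plus an unproved lift through ``the canonical filtration'') and of (3) (the relative-position count) remain assertions rather than proofs.

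A further point of comparison: the paper does not prove this theorem at all; it is imported from \cite{Fratila2016}, and the paper's sole new contribution to it is the observation that the characteristic-zero hypothesis of loc.\ cit.\ enters only through a generic-smoothness argument, which is replaced in arbitrary characteristic by \cref{L:exist_reg_bd} (existence of regular bundles), itself resting on \cref{C:induced is semistable}, i.e.\ on Sun's results (\cref{L:XS-stg-sem-induced}, \cref{T:Sun-thm sstb elliptic}). Your closing paragraph correctly senses that Frobenius semistability on genus-one curves is what makes the statement characteristic-free, but you locate the difficulty in the ``constancy of the Jordan--H\"older type'', whereas the paper localizes it precisely in the existence of regular bundles.
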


\begin{rem}
	In \cite{Fratila2016} there is a table with all the possible subgroups $L$ that appear in the above theorem. For the convenience of the reader we provide a copy of the table in the Appendix.
\end{rem}
\begin{rem}
	The proof from \cite{Fratila2016} is in characteristic zero, however the only moment that we used it was to apply "generic smoothness" (see \cite[Lemma 3.9]{Fratila2016}) and deduce the existence of certain regular bundles (see Definition~\ref{D:reg bundles}) which we prove here in arbitrary characteristic (see Lemma~\ref{L:exist_reg_bd}).
Therefore the results of \cite{Fratila2016} hold in positive characteristic as well.
\end{rem}

\subsection{Vector bundles over elliptic curves}

\begin{thm}
\label{T:At-coprime-unique-det}\cite[Corollary to Theorem 7]{Ati} Let $n\ge 1$ and $d\in\bZ$ be coprime. 
	\begin{enumerate}
		\item Any stable rank $n$ degree $d$ vector bundle over $E$ is uniquely determined by its determinant bundle.
		\item If $\cV$ is a vector bundle as above and $\cL\in\Pic^0(E)$ then $\cV\otimes\cL\simeq\cV$ if and only if $\cL\in\Pic^0(E)[n]$, the $n$-torsion subgroup.
	\end{enumerate}
\end{thm}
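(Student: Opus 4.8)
The plan is to study the \emph{determinant morphism} $\det\colon \cM_n^d \to \Pic^d(E)$, where I write $\cM_n^d$ for the set of isomorphism classes of stable bundles of rank $n$ and degree $d$ on $E$ (for $\gcd(n,d)=1$ these coincide, by Atiyah, with the indecomposable bundles of that rank and degree). The group $\Pic^0(E)$ acts on $\cM_n^d$ by $\cV\mapsto\cV\otimes\cL$ and on $\Pic^d(E)$ by tensoring, and the determinant is equivariant for these actions \emph{after precomposing the target action with the $n$-th power map}, since $\det(\cV\otimes\cL)\simeq(\det\cV)\otimes\cL^{\otimes n}$. This identity already yields the easy implication of part (2): if $\cV\otimes\cL\simeq\cV$ then $(\det\cV)\otimes\cL^{\otimes n}\simeq\det\cV$, so $\cL^{\otimes n}\simeq\cO_E$, i.e. $\cL\in\Pic^0(E)[n]$. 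Thus the real content is the reverse inclusion in (2), together with the injectivity of $\det$ asserted in (1).

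I would extract both from a single structural statement: \emph{$\Pic^0(E)$ acts transitively on $\cM_n^d$, with stabilizer of every point equal to $\Pic^0(E)[n]$}. Granting this, $\cM_n^d\simeq\Pic^0(E)/\Pic^0(E)[n]$, and under this identification $\det$ becomes the map induced by the $n$-th power isogeny $[n]\colon\Pic^0(E)\to\Pic^0(E)\simeq\Pic^d(E)$, whose kernel is exactly $\Pic^0(E)[n]$. Hence $\det$ is a bijection, proving (1), while the stabilizer statement is precisely the reverse inclusion of (2). As a cross-check on injectivity, note that stability forces each $\cV$ to be \emph{simple} ($\End(\cV)=\bk$), and Riemann--Roch on the genus-one curve $E$ gives $\chi(\cV,\cV')=0$ for any two members of $\cM_n^d$; since a nonzero map between stable bundles of equal slope is an isomorphism, this reduces injectivity of $\det$ to the nonvanishing of $\Hom(\cV,\cV')$ whenever $\det\cV\simeq\det\cV'$.

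To prove the structural statement I would reduce the rank by a Fourier--Mukai transform. Because $(n,d)$ is primitive, choose $A\in\SL_2(\bZ)$ carrying the column $(n,d)$ to $(1,0)$, and realise $A$ as an autoequivalence $\Phi$ of $D^b(E)$ composed from tensoring with a degree-one line bundle and the Fourier--Mukai transform along the Poincaré bundle; on numerical invariants $\Phi$ acts by $A$ on $(\rk,\deg)$. The key facts to establish are that $\Phi$ carries the stable objects of type $(n,d)$ (in the appropriate cohomological degree) to the stable objects of type $(1,0)$, i.e. to degree-zero line bundles, giving a bijection $\cM_n^d\xrightarrow{\sim}\Pic^0(E)\simeq E$; and that $\Phi$ intertwines the operation $\otimes\cL$ with translation by $\hL$ on $\hE\simeq E$. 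Transitivity of the $\Pic^0(E)$-action then follows because translations act transitively, and the stabilizer is computed as the kernel of the corresponding isogeny, which one checks equals $\Pic^0(E)[n]$. The alternative, following Atiyah, is a direct induction on the rank, using that every bundle of type $(r,0)$ is $F_r\otimes\cL$ for Atiyah's unipotent bundle $F_r$ and a unique $\cL\in\Pic^0(E)$, and propagating this along the elementary modifications that change $(r,d)$. Either way, the \textbf{main obstacle} is exactly this reduction: proving that the equivalence (or the inductive step) preserves stability and, crucially, that it converts tensoring by $\Pic^0(E)$ into an isogeny whose kernel is \emph{precisely} the full $n$-torsion $\Pic^0(E)[n]$ rather than a proper subgroup. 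This is the step where the hypothesis $\gcd(n,d)=1$ is indispensable, and it is the semihomogeneity of these bundles that makes the stabilizer as large as $\Pic^0(E)[n]$.
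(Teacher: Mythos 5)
First, a remark on the comparison itself: the paper does \emph{not} prove this statement — it is imported verbatim from Atiyah's paper (Corollary to Theorem 7 there), so there is no internal proof to measure you against; your proposal has to stand on its own. Your reductions are fine: the identity $\det(\cV\otimes\cL)\simeq\det(\cV)\otimes\cL^{\otimes n}$ does give the forward implication of (2), and transitivity of the $\Pic^0(E)$-action on $\cM_n^d$ together with the identification of the stabilizer as $\Pic^0(E)[n]$ does yield both (1) and (2). The Fourier--Mukai route to that structural statement is a legitimate modern alternative to Atiyah's original induction on rank.

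The genuine gap is at the step you yourself flag as the ``main obstacle,'' and the intertwining claim you would need is not correct as stated. For the plain Fourier--Mukai transform $S$ along the Poincar\'e bundle one indeed has $S(\cV\otimes\cL_x)\simeq t_x^*S(\cV)$, but $S$ sends $(\operatorname{rk},\deg)=(n,d)$ to $(d,-n)$, not to $(1,0)$; any composite $\Phi$ realizing $(n,d)\mapsto(1,0)$ must interleave $S$ with tensoring by degree-one line bundles, and conjugation by those does \emph{not} preserve the family of operations ``tensor by $\Pic^0(E)$'': it mixes translations and tensorings, giving $\Phi(\cV\otimes\cL_x)\simeq t_{u(x)}^*\Phi(\cV)\otimes\cL_{v(x)}$ for homomorphisms $u,v$ determined by the chosen factorization of $\Phi$. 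Since translations act trivially on isomorphism classes of degree-zero line bundles ($t_y^*\cM\simeq\cM$ for all $\cM\in\Pic^0(E)$), the entire content sits in $v$: transitivity requires $v$ to be surjective, and the stabilizer is exactly $\ker v$, so one must compute $v$ and show $\ker v=\Pic^0(E)[n]$. This computation is never done in your proposal. Note that a literal reading of your claim --- tensoring goes to translation of the objects --- would force the stabilizer to be all of $\Pic^0(E)$, while reading it as translation of points of $E\simeq\Pic^0(E)$ under an isomorphism $\cL\mapsto\hL$ would force it to be trivial; both contradict part (2) for $n\ge 2$. Likewise, the assertion that $\Phi$ carries stable objects of type $(n,d)$ to (shifts of) degree-zero line bundles is a genuine theorem requiring proof, not a formal property of equivalences. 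So what you have is a correct reduction of Atiyah's theorem to two unproven lemmas, together with a heuristic for why they should hold --- a sound plan, but not a proof.
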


\begin{thm}
	\label{T:BH-Bun_Z-torsor}\cite[Lemma 2.2.1 and Example 5.1.4]{BisHoff-lbdles} Let $X$ be a smooth projective curve and let \[1\to Z\to G\to H\to 1\] be a central extension. Fix $\vlam_G\in\pi_1(G)$ and denote by $\vlam_H$ the image of $\vlam_G$ in $\pi_1(H)$. Then the map
	\[\Bun_G^{\vlam_G}(X)\to\Bun_{H}^{\vlam_H}(X)\]
	is a $\Bun_Z^0(X)$-torsor. 
\end{thm}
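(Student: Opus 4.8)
The plan is to exhibit $\Bun_G^{\vlam_G}(X)\to\Bun_H^{\vlam_H}(X)$ as a torsor by producing an action of the group stack $\Bun_Z^0(X)$, checking its compatibility with the projection, and then verifying the two defining properties of a torsor: fppf-local surjectivity, and that the ``act-and-project'' map is an isomorphism. The starting observation is that, $Z$ being central in $G$, it is abelian, so $\Bun_Z(X)$ is a commutative group stack (a Picard stack) and its neutral component $\Bun_Z^0(X)$ is a subgroup stack; moreover centrality means the multiplication $\mu\colon Z\times G\to G$, $(z,g)\mapsto zg$, is a group homomorphism. Pushing bundles forward along $\mu$ gives a map $\Bun_Z(X)\times\Bun_G(X)\to\Bun_G(X)$, i.e. an action $(\cE,\cF)\mapsto\cE\cdot\cF$ by twisting. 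Since $\mu$ covers the projection $Z\times G\to G\to H$ (as $Z\mapsto 1$ in $H$), this action is compatible with the map to $\Bun_H(X)$, and because tensoring by a degree-zero $Z$-bundle leaves the topological type of a $G$-bundle unchanged, $\Bun_Z^0(X)$ preserves the component $\vlam_G$. Conceptually this is just $\Map(X,-)$ applied to the fibre sequence of classifying stacks $\bB Z\to\bB G\to\bB H$ attached to the central extension.

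For local surjectivity (the epimorphism property) I would use that $G\to H$ is a smooth surjection of group schemes with kernel $Z$, whence $\bB G\to\bB H$, and thus $\Bun_G(X)\to\Bun_H(X)$, is smooth surjective; the obstruction to lifting a given $H$-bundle $\cF_H$ to a $G$-bundle lies in $H^2(X,Z)$, which is discrete (indeed zero when $Z$ is a torus, by Tsen's theorem), so the obstruction is locally constant on $\Bun_H(X)$ and hence constant on the connected component $\Bun_H^{\vlam_H}(X)$. As $\Bun_G^{\vlam_G}(X)$ is non-empty and maps into this component, the obstruction vanishes identically on $\Bun_H^{\vlam_H}(X)$, so every $H$-bundle of type $\vlam_H$ lifts, fppf-locally on the test base, to a $G$-bundle, which can moreover be normalized to have type $\vlam_G$ after twisting by a $Z$-bundle of suitable degree.

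The heart of the argument, and the step I expect to be the main obstacle, is showing that
\[
\Bun_Z^0(X)\times\Bun_G^{\vlam_G}(X)\longrightarrow \Bun_G^{\vlam_G}(X)\times_{\Bun_H^{\vlam_H}(X)}\Bun_G^{\vlam_G}(X),\qquad (\cE,\cF)\mapsto(\cE\cdot\cF,\cF),
\]
is an isomorphism, i.e. that two lifts of a fixed $H$-bundle differ by a \emph{unique} $Z$-bundle. I would check this fppf-locally on the test scheme: after trivializing the common $H$-bundle, each of the two lifts is described by a \v{C}ech $1$-cocycle valued in $G$ whose image in $H$ is trivial, hence valued in $Z$; centrality of $Z$ is exactly what makes the quotient of the two cocycles a well-defined $Z$-valued $1$-cocycle, canonically independent of the trivialization, and this cocycle is the desired $Z$-bundle $\cE$ with $\cE\cdot\cF\simeq\cF'$. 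Uniqueness and functoriality of this construction supply the inverse map. The final bookkeeping that pins the structure group to the \emph{neutral} component $\Bun_Z^0(X)$, rather than a larger union of components of $\Bun_Z(X)$, comes from the degree analysis of the first paragraph: a $Z$-bundle comparing two type-$\vlam_G$ lifts must have degree in $\ker\big(\pi_1(Z)\to\pi_1(G)\big)$, and it is precisely the triviality of this kernel for the central extension at hand (equivalently, that the fibres are single $\Bun_Z^0(X)$-orbits) that yields the asserted $\Bun_Z^0(X)$-torsor structure over the single component $\Bun_H^{\vlam_H}(X)$.
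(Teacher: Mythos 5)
You should first be aware that the paper does \emph{not} prove this statement: it is imported wholesale from Biswas--Hoffmann (the citation \cite[Lemma 2.2.1 and Example 5.1.4]{BisHoff-lbdles}), so your attempt can only be compared with the standard argument. Your skeleton is the right one and matches that argument: the twisting action of $\Bun_Z^0(X)$ via the homomorphism $Z\times G\to G$, the fppf-epimorphism property, the act-and-project isomorphism checked by comparing $Z$-valued \v{C}ech cocycles, and the component bookkeeping at the end. The cocycle comparison (your ``heart'') is essentially correct as sketched.

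The genuine gaps are in the other two steps, and both matter for this paper because it applies the theorem in \emph{arbitrary characteristic} with finite kernels such as $Z([L,L])$. First, surjectivity: your claim that $\bB G\to\bB H$, hence $\Bun_G(X)\to\Bun_H(X)$, is \emph{smooth} is false when $Z$ is non-smooth, e.g.\ $Z=\mu_n$ with $p\mid n$ (for $\SL_n\to\PGL_n$ the fibres are torsors under $\Bun_{\mu_n}^0(X)$, whose underlying space $\Pic^0(X)[n]$ is non-reduced in characteristic $p\mid n$); what is available is only that $G\to H$ is an fppf epimorphism, which gives lifts fppf-locally on $S\times X$, not on $S$. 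Relatedly, your obstruction argument conflates the obstruction of a single bundle, a class in $H^2_{\mathrm{fppf}}(X,Z)$, with that of an $S$-family, a class in $H^2_{\mathrm{fppf}}(S\times X,Z)$: the epimorphism property requires killing the latter after an fppf cover of $S$, and the ``local constancy'' of the fibrewise class is exactly the point that needs proof (proper base change is not available off the shelf for fppf cohomology of $\mu_p$ in characteristic $p$). A route that avoids this entirely: over geometric points, lift a Borel reduction of $\cF_H$ using divisibility of $\Pic^0(X)$ together with the exactness of $\bX_*(Z)\to\pi_1(G)\to\pi_1(H)$; then upgrade pointwise surjectivity to an fppf epimorphism using flatness of $\Bun_G\to\Bun_H$ and the pseudo-torsor property you already established. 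Second, the final bookkeeping is asserted where it needs proof, and is mis-stated for disconnected $Z$: components of $\Bun_Z(X)$ are \emph{not} indexed by degrees (for $Z=\mu_n$ one has $\pi_0(\Bun_{\mu_n}(X))=\Pic^0(X)[n]$ while every $\mu_n$-bundle has degree zero in the character-pairing sense used in this paper), so ``degree in $\ker(\pi_1(Z)\to\pi_1(G))$'' must be read via $\deg:\pi_0(\Bun_Z(X))\to\Hom(\bX^*(Z),\bZ)$. The two facts you need --- that twisting by a degree-zero $Z$-bundle preserves $\Bun_G^{\vlam_G}$, and that \emph{only} degree-zero $Z$-bundles do --- follow by reducing $\cE\oset{Z}{\times}G$ to $T_G$ and using that the saturation of $\bX_*(Z)$ in $\bX_*(T_G)$ meets the coroot lattice trivially (the central torus meets $[G,G]$ in a finite subgroup). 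This holds for \emph{every} central extension of reductive groups, so it is not a hypothesis ``for the extension at hand,'' but it is a lemma, not a tautology, and your proof is incomplete without it.
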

\begin{rem}
	The same holds for semistable bundles also since being semistable for $G$ or $H$ is the same thing (the flag varieties are the same).
\end{rem}

\begin{cor}
	\label{C:M_PGLn=pt}Let $n\ge 1$ and $d\in\bZ$ be coprime. Then over an elliptic curve $E$ we have 
	\[\Bun_{\PGL_n}^{d,\st}\simeq \bB\Pic^0(E)[n]\red,\]
	where we have denoted by $\Pic^0(E)[n]$ the kernel (subgroup scheme) of the multiplication by $n:\Pic^0(E)\to \Pic^0(E)$.
	In particular, we deduce that $\cM_{\PGL_n}^d=\pt$.
\end{cor}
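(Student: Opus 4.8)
The plan is to realize $\Bun_{\PGL_n}^{d,\st}$ as a stacky quotient of the space of stable vector bundles by the Picard stack, and then to read off its isomorphism classes and automorphisms from Atiyah's theorem. Concretely, I would apply \cref{T:BH-Bun_Z-torsor} to the central extension $1\to\Gm\to\GL_n\to\PGL_n\to 1$: fixing the degree $d$ (whose image in $\pi_1(\PGL_n)=\bZ/n$ is the prescribed topological type), the induced map $\Bun_{\GL_n}^{d}\to\Bun_{\PGL_n}^{d}$ is a torsor under $\Bun_{\Gm}^0(E)=\underline\Pic^0(E)$, acting by tensoring a vector bundle with a degree-zero line bundle. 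Since $\gcd(n,d)=1$, every semistable rank-$n$ degree-$d$ bundle is stable, and as semistability is unchanged under passage to the associated $\PGL_n$-bundle and under twisting by degree-zero line bundles (the Remark after \cref{T:BH-Bun_Z-torsor}), the torsor restricts to the stable loci. Hence $\Bun_{\PGL_n}^{d,\st}\simeq[\Bun_{\GL_n}^{d,\st}/\underline\Pic^0(E)]$.

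First I would count isomorphism classes. By \cref{T:At-coprime-unique-det}(1) the determinant gives a bijection between isomorphism classes of stable rank-$n$ degree-$d$ bundles and $\Pic^d(E)$. A line bundle $\cL\in\Pic^0(E)$ sends $\cV$ to $\cV\otimes\cL$, hence $\det\cV$ to $\det\cV\otimes\cL^{\otimes n}$; under the identification $\Pic^d(E)\simeq\Pic^0(E)\simeq E$ this is translation by the image of $\cL$ under the multiplication-by-$n$ isogeny $[n]\colon E\to E$. As $[n]$ is surjective, the $\underline\Pic^0(E)$-action on isomorphism classes is transitive, so the quotient has a single isomorphism class. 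This already yields the last assertion: the coarse space $\cM_{\PGL_n}^d$ is a point.

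Next I would compute the automorphism group of this unique object. In the quotient groupoid an automorphism of the image of a stable $\cV$ is represented by a pair $(\cL,\phi)$ with $\cL\in\Pic^0(E)$ and $\phi\colon\cV\otimes\cL\xrightarrow{\sim}\cV$, taken modulo the scalar automorphisms of $\cV$ and of $\cL$. By \cref{T:At-coprime-unique-det}(2) such an $\cL$ exists precisely when $\cL\in\Pic^0(E)[n]$, and because $\cV$ is stable, hence simple with $\Aut(\cV)=\Gm$, the isomorphism $\phi$ is then unique up to scalar; the two copies of $\Gm$ cancel under this rigidification. Thus the automorphism group is identified with $\Pic^0(E)[n]$, equivalently with the scheme-theoretic stabilizer $\ker([n])$ from the previous paragraph. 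Having one isomorphism class with automorphism group $\Pic^0(E)[n]$ exhibits $\Bun_{\PGL_n}^{d,\st}$ as $\bB\Pic^0(E)[n]$, and passing to reduced structures gives the stated $\bB\Pic^0(E)[n]\red$.

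The main obstacle is to carry out the quotient computation at the level of group schemes rather than merely $\bk$-points, and in particular to pin down the scheme structure on the automorphism group in characteristic $p$ when $p\mid n$. There the isogeny $[n]$ is inseparable, $\Pic^0(E)[n]$ is non-reduced, and one finds genuine infinitesimal automorphisms: for the associated $\PGL_n$-bundle $P$ one computes $H^0(E,\ad P)=\ker\big(H^1(E,\cO_E)\to H^1(E,\underline\End\cV)\big)$, the map being induced by the inclusion of scalars $\cO_E\hookrightarrow\underline\End\cV$ and equal to multiplication by $n$ on these one-dimensional spaces, so it is one-dimensional exactly when $p\mid n$, matching the tangent space of $\Pic^0(E)[n]$ at the identity. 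Reconciling this with the reduced group scheme in the statement, i.e. verifying that the relevant stack is taken with its reduced structure, is the delicate point; the transitivity and the identification of stabilizers above are otherwise formal consequences of Atiyah's theorem and \cref{T:BH-Bun_Z-torsor}.
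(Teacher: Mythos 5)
You follow exactly the paper's route: the torsor structure from \cref{T:BH-Bun_Z-torsor} applied to $1\to\Gm\to\GL_n\to\PGL_n\to 1$, then \cref{T:At-coprime-unique-det}(1) to see that $\Pic^0(E)$ acts transitively on isomorphism classes in $\Bun^{d,\st}_{\GL_n}$ through the isogeny $[n]$, hence a single isomorphism class downstairs and $\cM^d_{\PGL_n}=\pt$, and \cref{T:At-coprime-unique-det}(2) plus simplicity of stable bundles to identify the $\bk$-points of the automorphism group of the unique point with $\Pic^0(E)[n](\bk)$. (One notational slip: $\Bun^0_{\Gm}(E)$ is the Picard \emph{stack} $\Pic^0(E)\times\bB\Gm$, not the Picard scheme; your later observation that the two copies of $\Gm$ cancel under rigidification is what makes this harmless.) Up to this point your argument and the paper's proof coincide.

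The gap is the step you flag but leave open: nothing in your argument shows that the automorphism group \emph{scheme} is the reduced group scheme $\Pic^0(E)[n]\red$, and ``passing to reduced structures'' is not a legal move, since $\bB A$ and $\bB A\red$ are non-isomorphic stacks whenever $A$ is non-reduced (the automorphism group scheme of the unique point is an invariant of the stack). The paper closes precisely this gap with one additional claim: the stabilizer must be a \emph{smooth} group scheme because $\Bun_{\PGL_n}$ is a smooth stack, and a smooth quasi-finite group scheme with points $\Pic^0(E)[n](\bk)$ must be $\Pic^0(E)[n]\red$. You should be aware, however, that your own deformation computation --- $H^0(E,\ad P)=\ker\bigl(H^1(E,\cO_E)\to H^1(E,\underline{\End}(\cV))\bigr)$, where the map is Serre-dual to the trace and hence is multiplication by $n$ on one-dimensional spaces --- is correct, and it says that $\Aut(P)$ has nontrivial Lie algebra, hence is \emph{non-reduced}, whenever $p\mid n$. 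This is in direct contradiction with the displayed isomorphism onto $\bB\Pic^0(E)[n]\red$, and it also shows that the smoothness argument cannot be valid as stated: smoothness of a stack does not force smoothness of its automorphism groups (for instance $\bB\mu_p\simeq[\Gm/\Gm]$, with $\Gm$ acting through the $p$-th power map, is a smooth stack whose unique point has infinitesimal automorphisms). The remark following the corollary in the paper in fact concedes that no direct proof of reducedness of the scheme-theoretic stabilizer is known. So, as a proof of the corollary as literally stated, your proposal is incomplete at exactly this point; but the point is a genuine delicacy rather than a routine omission. Your computation indicates that for $p\mid n$ the isomorphism should read $\Bun^{d,\st}_{\PGL_n}\simeq\bB\Pic^0(E)[n]$ with the full scheme-theoretic kernel, while what survives unconditionally is the case $p\nmid n$ (where $\Pic^0(E)[n]$ is already reduced) together with the coarse statement $\cM^d_{\PGL_n}=\pt$, which your transitivity argument establishes exactly as the paper does.
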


\begin{proof}
	Using Theorem \ref{T:BH-Bun_Z-torsor} we have that $\Bun_{\GL_n}^{d,\st}\to\Bun_{\PGL_n}^{d,\st}$ is a $\Bun_{\bG_m}^0$-torsor. 
	By Theorem~\ref{T:At-coprime-unique-det} we deduce that $\Bun_{\PGL_n}^{d,\st}$ has only one isomorphism class of objects and the automorphism group is the kernel of the action of $\Pic^0(E)$ on $\Bun_{\GL_n}^{d,\st}$. However, this kernel must be a smooth group scheme because $\Bun_{\PGL_n}$ is a smooth stack, so by Theorem~\ref{T:At-coprime-unique-det}  it must be  $\Pic^0(E)[n]\red$ .
\end{proof}

\begin{rem}
	I don't know a direct way of showing that the scheme theoretic stabilizer of the action of $\Pic^0(E)$ on $\Bun_{\PGL_n}^{d,\sst}$ is precisely $\Pic^0(E)[n]\red$.
\end{rem}

\section{Proof of \cref{T:thmGalois covering}}

\subsection{The action of the center}
\begin{cor}
	\label{C:M_Z acts trans on M_L} Let $E$ be an elliptic curve, let $L$ be a reductive group and let $\vlam_L\in\pi_1(L)$ such that there exist stable $L$-bundles of degree $\vlam_L$ on $E$ (see Theorem~\ref{T:JH for Gbdles} (5) ). Then the action of $\cM_{Z(L)}^0$ on $\cM_L^{\vlam_L}$ is transitive.
\end{cor}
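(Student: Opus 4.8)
The plan is to compare $L$-bundles with their associated $L^\ad$-bundles and to exploit the torsor structure of Theorem~\ref{T:BH-Bun_Z-torsor}. Since stable $L$-bundles of degree $\vlam_L$ exist, Theorem~\ref{T:JH for Gbdles}(5) tells us that $L^\ad=\prod_i\PGL_{n_i}$ with $\vlam_L^\ad\equiv (d_i)_i$ and $\gcd(d_i,n_i)=1$ for all $i$. The central isogeny $L\to L^\ad=L/Z(L)$ fits into a central extension $1\to Z(L)\to L\to L^\ad\to 1$, so by Theorem~\ref{T:BH-Bun_Z-torsor} (and the remark following it, which covers semistable bundles since semistability is detected on the adjoint group) the induced map $\Bun_L^{\vlam_L,\sst}\to\Bun_{L^\ad}^{\vlam_L^\ad,\sst}$ is a $\Bun_{Z(L)}^0$-torsor. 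The strategy is then to show that the base of this torsor is a single point, so that all stable $L$-bundles lie in one fibre and hence differ by a $Z(L)$-twist of degree $0$.

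First I would check that the base stack has a single isomorphism class of objects. For a single factor $\PGL_{n_i}$ with $\gcd(d_i,n_i)=1$, coprimality rules out any slope-preserving reduction, so semistability forces stability exactly as in the vector-bundle case; thus $\Bun_{\PGL_{n_i}}^{d_i,\sst}=\Bun_{\PGL_{n_i}}^{d_i,\st}\simeq \bB\Pic^0(E)[n_i]\red$ by Corollary~\ref{C:M_PGLn=pt}, which has exactly one isomorphism class of objects. Taking the product over $i$, the stack $\Bun_{L^\ad}^{\vlam_L^\ad,\sst}$ also has a single isomorphism class; equivalently $\cM_{L^\ad}^{\vlam_L^\ad}=\pt$.

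With these two facts, transitivity follows quickly. Given two points of $\cM_L^{\vlam_L}$, represent them by stable $L$-bundles $\cF_L,\cF_L'$ (here $S$-equivalence is isomorphism, by the stability hypothesis). Their associated $L^\ad$-bundles are semistable of degree $\vlam_L^\ad$ and hence isomorphic by the previous step, so $\cF_L$ and $\cF_L'$ lie in the same fibre of the $\Bun_{Z(L)}^0$-torsor. Simple transitivity of the torsor action on fibres then yields an $\cF_Z\in\Bun_{Z(L)}^0(E)$ together with an isomorphism $\cF_L'\simeq \cF_Z\cdot\cF_L$, where $\cdot$ denotes the twisting of $\cF_L$ by the central $Z(L)$-bundle $\cF_Z$. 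Passing to isomorphism classes gives $[\cF_L']=[\cF_Z]\cdot[\cF_L]$ with $[\cF_Z]\in\cM_{Z(L)}^0$, which is precisely the assertion.

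The one point needing care — and the only genuine obstacle — is the passage between the stacky action of $\Bun_{Z(L)}^0$ on $\Bun_L^{\vlam_L,\sst}$ and the action of the coarse space $\cM_{Z(L)}^0$ on $\cM_L^{\vlam_L}$: one must verify that the torsor action descends after quotienting by the automorphisms (which act trivially on isomorphism classes), so that orbits downstairs are the images of orbits upstairs. Because every bundle in sight is stable, the map from isomorphism classes to $\bk$-points of the moduli space is a bijection, and transitivity on $\bk$-points — which suffices over the algebraically closed field $\bk$ — follows immediately from the fibrewise transitivity established above.
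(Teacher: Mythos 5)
Your proposal is correct and follows essentially the same route as the paper's own proof: invoke Theorem~\ref{T:JH for Gbdles}(5) and Corollary~\ref{C:M_PGLn=pt} to get $\cM_{L^\ad}^{\vlam_L^\ad}=\pt$, use the $\Bun_{Z(L)}^0$-torsor structure of Theorem~\ref{T:BH-Bun_Z-torsor} to deduce transitivity on objects of $\Bun_L^{\vlam_L,\st}$, and pass to the moduli space. The extra care you take in spelling out why semistability equals stability in the coprime case and why transitivity descends to $\bk$-points of the coarse space is a fuller account of what the paper dispatches with ``this property is clearly preserved when we pass to moduli spaces,'' but it is the same argument.
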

\begin{proof}
	From Theorem~\ref{T:JH for Gbdles} (5) we have  $L^{\ad}\simeq\prod_i \PGL_{n_i}$ and $\vlam_L^{\ad}=(d_i)_i$ such that $\gcd(d_i,n_i)=1$. So we can apply Corollary~\ref{C:M_PGLn=pt} to conclude that $\cM_{L^\ad}^{\vlam_L^\ad}=\pt$. 
	
	Since $\Bun_L^{\vlam_L,\st}\to \Bun_{L^\ad}^{\vlam_L^\ad,\st}$ is a $\Bun_{Z(L)}^0$-torsor (see Theorem~\ref{T:BH-Bun_Z-torsor}) we deduce that $\Bun_{Z(L)}^0$ acts on $\Bun_L^{\vlam_L,\st}$ transitively on objects. This property is clearly preserved when we pass to moduli spaces.
\end{proof}
\begin{cor}
	\label{C:Bun_L same autom group} 
	Under the  hypotheses of the previous corollary, all $L$-bundles $\cF_L$ in $\Bun_L^{\vlam_L,\st}$ have the same automorphism group.
\end{cor}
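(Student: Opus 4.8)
The plan is to obtain this as a formal consequence of the transitivity proved in \cref{C:M_Z acts trans on M_L}, exploiting that the center acts by \emph{invertible} translations.

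First I would recall the shape of the action used in \cref{C:M_Z acts trans on M_L}. Since $Z(L)$ is central, the multiplication $Z(L)\times L\to L$ is a homomorphism of groups, and the induced operation $\Bun_{Z(L)}^0\times\Bun_L^{\vlam_L,\st}\to\Bun_L^{\vlam_L,\st}$ is exactly the action for which the proof of \cref{C:M_Z acts trans on M_L} established transitivity on objects. Thus every stable $L$-bundle of type $\vlam_L$ is a translate of any fixed one.

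The key point is that $\Bun_{Z(L)}^0$ is a commutative group stack, so each object $\cE_Z\in\Bun_{Z(L)}^0$ gives a translation $t_{\cE_Z}\colon\cF_L\mapsto\cE_Z\cdot\cF_L$ which is a self-equivalence of the groupoid $\Bun_L^{\vlam_L,\st}$, with quasi-inverse $t_{\cE_Z^{-1}}$ furnished by the inverse in the group structure of $\Bun_{Z(L)}^0$. A self-equivalence of a groupoid induces an isomorphism on the automorphism group of every object, so one gets $\Aut(\cF_L)\simeq\Aut(\cE_Z\cdot\cF_L)$ for all $\cF_L$ and all $\cE_Z$.

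Finally, given any two bundles $\cF_L,\cF'_L\in\Bun_L^{\vlam_L,\st}$, transitivity yields some $\cE_Z$ with $\cE_Z\cdot\cF_L\simeq\cF'_L$; chaining the isomorphism above with this one gives $\Aut(\cF_L)\simeq\Aut(\cF'_L)$. As the pair was arbitrary, all stable bundles of topological type $\vlam_L$ carry isomorphic automorphism groups, which is the assertion. There is no serious obstacle: the only point to verify with any care is that translation by a central bundle is genuinely invertible at the level of stacks, and this is immediate from the group-stack structure on $\Bun_{Z(L)}^0$.
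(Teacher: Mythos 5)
Your proposal is correct and follows essentially the same route as the paper: the paper likewise combines the transitivity from \cref{C:M_Z acts trans on M_L} with the observation that twisting by a central $Z(L)$-bundle identifies automorphism groups, writing the identification explicitly as $\theta\mapsto\theta\otimes\id$ where you instead invoke invertibility of the translation functor via the group-stack structure. These are two phrasings of the same argument, so there is nothing further to reconcile.
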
	

\begin{proof}
	We put $Z:=Z(L)$. For $\cF_Z\in\Bun_Z^0$ and $\cF_L\in\Bun_L$ there is a canonical isomorphism $\Aut(\cF_L)\to\Aut(\cF_L\otimes\cF_Z)$ sending $\theta$ to $\theta\otimes \id$. From Corollary~\ref{C:M_Z acts trans on M_L} the action $\Bun_Z^0\acts \Bun_L^{\vlam_L,\st}$ is transitive on objects so we conclude.
\end{proof}

\begin{rem}
	The above Corollary is never used in the sequel but it allows us to see that $\Bun_L^{\vlam_L,\st}\to \cM_L^{\vlam_L}$ is a gerbe.
\end{rem}

\subsection{Regular bundles}
This subsection is dedicated to proving the following Lemma which was one of the key obstacles:
\begin{lemma}\label{L:gen etale} 
	Let $\vlam_G\in\pi_1(G)$ and $L,P,\vlam_L\in\pi_1(L)$ be as in Theorem~\ref{T:JH for Gbdles}. 
	Then the map $\ind:\cM_L^{\vlam_L}\to \cM_G^{\vlam_G}$ is generically étale.
\end{lemma}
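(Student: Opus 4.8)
The plan is to deduce generic étaleness from a single tangent-space computation. By \cref{T:JH for Gbdles} the map $\ind$ is finite, and it is surjective because every semistable $G$-bundle of type $\vlam_G$ reduces to $P$; hence $\cM_L^{\vlam_L}$ and $\cM_G^{\vlam_G}$ are irreducible of the same dimension $\dim Z(L)$, the source moreover being smooth since $\cM_{Z(L)}^0$ acts transitively on it (\cref{C:M_Z acts trans on M_L}), exhibiting it as a quotient of an abelian variety by a finite subgroup scheme acting by translation. As the étale locus is open, it then suffices to produce one stable $\cF_L$ at which $[\cF_G]:=\ind([\cF_L])$ is a smooth point of $\cM_G^{\vlam_G}$ and the differential $d\ind$ is an isomorphism.

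The differential admits a cohomological description. At a stable $\cF_L$—where $\Bun_L^{\vlam_L,\st}\to\cM_L^{\vlam_L}$ is a gerbe, so $T_{[\cF_L]}\cM_L^{\vlam_L}=H^1(E,\fl_{\cF_L})$—the differential is the map $H^1(E,\fl_{\cF_L})\to H^1(E,\fg_{\cF_G})$ induced by the $L$-stable decomposition $\fg=\fl\oplus\fn_P\oplus\fn_P^{-}$, where $\cF_G=\ind_L^G\cF_L$ and $\fg_{\cF_G}=\fg_{\cF_L}$. The structural input I would exploit is that the slope $\phi_G(\vlam_G)=\phi_L(\vlam_L)$ is central, lying in $\bX_*(Z(G)^\circ)_\bQ$: each irreducible constituent of $\fn_P$ then has highest weight pairing to $0$ with $\phi_L(\vlam_L)$, so by \cref{P:slope assoc vbdl} its associated bundle has slope $0$, and by \cref{C:induced is semistable} it is semistable. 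Hence $(\fn_P)_{\cF_L}$ and $(\fn_P^{-})_{\cF_L}$ are semistable of degree $0$, so $h^0=h^1$ on each, and Serre duality on $E$ (using $\omega_E\simeq\cO_E$ and $(\fn_P^{-})_{\cF_L}\simeq(\fn_P)_{\cF_L}^{\vee}$) makes all four of these numbers equal.

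The decisive choice is to take $\cF_L$ \emph{regular} (\cref{D:reg bundles}), i.e.\ $H^0(E,(\fn_P)_{\cF_L})=0$; by the previous paragraph this forces $H^0=H^1=0$ for both $(\fn_P)_{\cF_L}$ and $(\fn_P^{-})_{\cF_L}$. Two consequences follow at once. First $H^0(E,\fg_{\cF_G})=H^0(E,\fl_{\cF_L})=\mathfrak z(\fl)$, so $\Aut(\cF_G)$ is reductive, $\cF_G$ is genuinely polystable, and its class in $\cM_G^{\vlam_G}$ is represented by $\cF_G$ itself. Second $H^1(E,\fg_{\cF_G})=H^1(E,\fl_{\cF_L})$. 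Passing to coarse spaces, $T_{[\cF_G]}\cM_G^{\vlam_G}$ injects into the stack tangent space $H^1(E,\fg_{\cF_G})$, and the composite
\[
H^1(E,\fl_{\cF_L})=T_{[\cF_L]}\cM_L^{\vlam_L}\xrightarrow{\,d\ind\,}T_{[\cF_G]}\cM_G^{\vlam_G}\hookrightarrow H^1(E,\fg_{\cF_G})
\]
is precisely the stack-level differential, hence an isomorphism. Therefore the inclusion on the right is an equality, $[\cF_G]$ is a smooth point, and $d\ind$ is an isomorphism there, which completes the reduction.

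The main obstacle is the existence of a regular bundle in arbitrary characteristic, which is the content of \cref{L:exist_reg_bd}. My approach would again be the transitive action of $\cM_{Z(L)}^0=\Pic^0(E)\otimes_\bZ\bX_*(Z(L))$: replacing $\cF_L$ by a twist $\cF_L\otimes\cF_Z$ tensors each irreducible piece of $(\fn_P)_{\cF_L}$ by a degree-$0$ line bundle determined by $\cF_Z$ and the relevant weight, and a degree-$0$ semistable bundle has a nonzero section only if it contains $\cO_E$ as a sub-bundle. Using the explicit shape of $L$ from \cite[Corollary 4.3, Section 4.2]{Fratila2016} (so that $L^{\ad}\cong\prod_i\PGL_{n_i}$ with coprime data), one shows that the locus of $\cF_Z$ for which some piece acquires a section is a proper closed subset of the abelian variety $\cM_{Z(L)}^0$, so a regular $\cF_L$ exists. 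The characteristic-$p$ difficulty is exactly that one cannot invoke generic smoothness of a dominant map as in \cite{Fratila2016}; instead one must bound the non-regular locus directly inside $\cM_{Z(L)}^0$, and here \cref{C:induced is semistable}—semistability of associated bundles, valid in all characteristics—is the indispensable tool.
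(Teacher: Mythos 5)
Your proposal is, at its core, the same proof as the paper's: reduce generic étaleness to étaleness at one well-chosen point, take that point to be a regular stable $L$-bundle whose existence is proved exactly as in \cref{L:exist_reg_bd} (twist a Frobenius-semistable bundle by generic degree-zero $Z(L)$-bundles, with \cref{C:induced is semistable} as the characteristic-$p$ input, the central characters being nontrivial because the centralizer of $Z(L)$ in $G$ is $L$), and kill $H^0$ and $H^1$ of $(\fg/\fl)_{\cF_L}$ there using degree zero, genus one and Riemann--Roch. Your variant of assuming only $H^0(E,(\fn_P)_{\cF_L})=0$ and recovering the rest by Serre duality is an equivalent repackaging of the paper's $\fg/\fl$-regularity (in the spirit of the remark following \cref{D:reg bundles}), though note that \cref{D:reg bundles} already gives vanishing on both $\fn_P$ and $\fn_P^-$, which lets one avoid the identification $(\fn_P^-)_{\cF_L}\simeq(\fn_P)_{\cF_L}^{\vee}$ altogether.

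The one place where you go beyond the paper is the coarse-space bookkeeping, and there the pivotal claim --- that $T_{[\cF_G]}\cM_G^{\vlam_G}$ injects naturally into $H^1(E,\fg_{\cF_G})$ --- is not justified. Since $L\neq G$ forces $\cF_G=\ind(\cF_L)$ to be strictly polystable, $\Bun_G^{\vlam_G,\sst}\to\cM_G^{\vlam_G}$ is not a gerbe near $\cF_G$ (non-polystable bundles are S-equivalent to it), and for such GIT-type coarse spaces the natural comparison map runs the other way, $H^1(E,\fg_{\cF_G})=T_{\cF_G}\Bun_G^{\vlam_G,\sst}\to T_{[\cF_G]}\cM_G^{\vlam_G}$, and it can fail to be either injective or surjective: for $\mu_2$ acting on the affine line by $x\mapsto -x$, the induced map on tangent spaces at the origin is zero. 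To be fair, the paper's own proof makes the identical identification silently (``by looking at the differential of the map we have to show the bijectivity of $H^1(E,\fl_{\cF_L})\to H^1(E,\fg_{\cF_L})$''), so you have not missed anything the paper supplies; but as written your injection would need an étale-slice or good-moduli-space argument at the polystable point, which in positive characteristic is itself not free. The remaining additions (smoothness of the source via the transitive $\cM_{Z(L)}^0$-action, the dimension count, polystability of the image point) are sound, and your sketch of the existence of regular bundles matches the paper's, except that the finiteness of the set of degree-zero line subbundles needs only semistability of the associated degree-zero bundles, not the explicit classification of $L$.
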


Let us introduce the notion of regular $L$-bundles.\footnote{There exists another notion of regular stable bundles: those whose automorphism group is exactly the center of the group (see \cite{FM-I, FMW1}). However we'll not use this notion in this paper.}

\begin{defi}\label{D:reg bundles}
	\begin{enumerate}
		\item Let $H$ be an algebraic group and $V$ a representation of $H$. Consider $\vlam_H\in\pi_1(H)$ such that its image in $\pi_1(\GL(V))$ is $0$. An $H$-bundle $\cF_H$ of degree $\vlam_H$ is called $V$-\emph{regular} if $H^0(X,V_{\cF_H})=0$,
		\item Let $P\subset G$ be a parabolic subgroup with Levi subgroup $L$. A $P$-bundle over a curve $X$ is called \emph{regular} if it is $\fg/\fp$-regular. An $L$-bundle is \emph{regular} if it is $\fg/\fl$-regular.
	\end{enumerate}
\end{defi}

\begin{rem}
This condition on $\cF_P$ is in order for the differential of $p:\Bun_P^{\vlam_P}\to\Bun_G^{\vlam_G}$ to be injective at $\cF_P$.
However, Serre duality over elliptic curves implies also the surjectivity, i.e. smoothness of $p$ at $\cF_P$.
\end{rem}

The core of the proof of Lemma~\ref{L:gen etale} is to show that there exist regular bundles:
\begin{lemma}
	\label{L:exist_reg_bd}
	Let $X$ be a curve and $\vlam_G,P,L,\vlam_P$ as in Theorem~\ref{T:JH for Gbdles}. Then the substack of regular $L$-bundles in $\Bun_L^{\vlam_L,\st}$ is open and dense.
\end{lemma}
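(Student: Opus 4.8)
The plan is to prove openness by upper semicontinuity and density by producing enough regular bundles through the twisting action of $\cM_{Z(L)}^0$; the conceptual heart, and the place where positive characteristic genuinely enters, is the assertion that the bundle $(\fg/\fl)_{\cF_L}$ is semistable of degree $0$.

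First I would record why this degree-$0$ semistability holds. By definition of the slope map, $\phi_G(\vlam_G)$ lies in $\bX_*(Z(G))_\bQ$, so it pairs to zero with every root of $G$; combined with $\phi_G(\vlam_G)=\phi_P(\vlam_P)=\phi_L(\vlam_L)$ from Theorem~\ref{T:JH for Gbdles}(1), the slope $\phi_L(\vlam_L)$ pairs trivially with each root occurring in $\fg/\fl$. Decomposing $\fg/\fl=\bigoplus_j V_j$ into irreducible $L$-representations, the center $Z(L)$ acts on each $V_j$ through a single character $\chi_j$, and $\chi_j\neq 0$ because the $T$-weights of $V_j$ are roots outside $\Phi_L$, which restrict nontrivially to $Z(L)$. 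Hence Corollary~\ref{C:induced is semistable} applies (here genus one and Sun's Theorem~\ref{T:Sun-thm sstb elliptic} are used, so that stability of $\cF_L$ upgrades to Frobenius semistability and passes to associated bundles), giving that each $(V_j)_{\cF_L}$ is semistable; its slope equals $\langle\phi_L(\vlam_L),\lambda_j\rangle=0$ by Proposition~\ref{P:slope assoc vbdl}, since the highest weight $\lambda_j$ is a root of $G$. Therefore $(\fg/\fl)_{\cF_L}$ is semistable of degree $0$, confirming that the regularity condition of Definition~\ref{D:reg bundles} is the vanishing of $h^0$ of a degree-$0$ bundle, which on a genus-one curve is its generic value. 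Openness of the regular locus then follows from upper semicontinuity of $h^0$ applied to the universal bundle.

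For density I would exploit the action of $(\cM_{Z(L)}^0)^\circ$ by twisting. Fixing a stable $\cF_L$, one has $(V_j)_{\cF_L\otimes\cF_Z}\simeq (V_j)_{\cF_L}\otimes(\chi_j)_*\cF_Z$, and since $(V_j)_{\cF_L}$ is semistable of degree $0$, the twist $(V_j)_{\cF_L}\otimes\cL$ has a nonzero section only for the finitely many $\cL\in\Pic^0(E)$ that are inverses of its Jordan--Hölder constituents (a nonzero map $\cL^{-1}\to (V_j)_{\cF_L}$ between semistable bundles of slope $0$ is a subbundle inclusion). As $\chi_j\neq 0$, the map $\cF_Z\mapsto(\chi_j)_*\cF_Z$ from $(\cM_{Z(L)}^0)^\circ$ to $\Pic^0(E)$ is dominant, so the bad locus for each $j$ is a proper closed subset; intersecting the finitely many good loci yields a dense open $U\subseteq(\cM_{Z(L)}^0)^\circ$ with $\cF_L\otimes\cF_Z$ regular for every $\cF_Z\in U$. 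In particular regular stable $L$-bundles exist.

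Finally I would upgrade existence to density using transitivity. By Corollary~\ref{C:M_Z acts trans on M_L} the group $\cM_{Z(L)}^0$ acts transitively on $\cM_L^{\vlam_L}$, so the orbits of its identity component $(\cM_{Z(L)}^0)^\circ$ are open and closed, hence are exactly the connected components of $\cM_L^{\vlam_L}$; in particular each component $C$ is irreducible. Given any $C$ and choosing $\cF_L\in C$ as above, the nonempty set $U\cdot\cF_L$ lies in both $C$ and the regular locus; since the regular locus is open and $C$ is irreducible, it is dense in $C$. As $C$ was arbitrary, the regular locus is open and dense in $\cM_L^{\vlam_L}$, equivalently in the gerbe $\Bun_L^{\vlam_L,\st}$. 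The main obstacle throughout is the degree-$0$ semistability of $(\fg/\fl)_{\cF_L}$ in characteristic $p$: without Corollary~\ref{C:induced is semistable} one cannot control its global sections, and it is exactly there that the genus-one hypothesis, via Sun's Theorem~\ref{T:Sun-thm sstb elliptic}, is indispensable.
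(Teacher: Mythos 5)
Your proposal follows essentially the same route as the paper's proof: openness via semicontinuity of $h^0$; semistability and degree zero of the associated bundles obtained from $\fg/\fl$ via Sun's results (\cref{T:Sun-thm sstb elliptic}, \cref{C:induced is semistable}) together with the slope formula of \cref{P:slope assoc vbdl} and the identity $\phi_L(\vlam_L)=\phi_G(\vlam_G)$; and nonemptiness/density by twisting a fixed stable bundle with a generic degree-zero $Z(L)$-bundle, using that a semistable degree-zero vector bundle on $E$ has only finitely many isomorphism classes of degree-zero line subbundles and that the relevant central characters are nontrivial.

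There is, however, one step that fails exactly in the setting this lemma was designed to handle: in positive characteristic the $L$-module $\fg/\fl$ need not be completely reducible, so your decomposition $\fg/\fl=\bigoplus_j V_j$ into irreducible $L$-representations is unjustified. The paper avoids this by working with a filtration of $\fg/\fl$ whose subquotients are highest-weight representations (alternatively, one may first split $\fg/\fl$ into $Z(L)$-isotypic pieces, which is always legitimate since $Z(L)$ is diagonalizable, hence linearly reductive, and then filter each piece): each subquotient $W$ yields a semistable degree-zero bundle $W_{\cF_L}$ by \cref{L:XS-stg-sem-induced} and \cref{P:slope assoc vbdl}, and vanishing of $H^0$ for all subquotients of a filtration still forces $H^0(X,(\fg/\fl)_{\cF_L})=0$. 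Since the rest of your argument uses only that each piece carries a single nontrivial central character and induces a semistable degree-zero bundle, the repair is immediate, but as written the direct-sum claim is a genuine char-$p$ gap. Two smaller remarks: for dominance of $\cF_Z\mapsto(\chi_j)_*\cF_Z$ on the identity component of $\cM_{Z(L)}^0$ you need $\chi_j$ nontrivial on $Z(L)^\circ$, not merely on $Z(L)$; this holds because $C_G(Z(L)^\circ)=L$, which is the fact the paper invokes. Also, your final passage through orbits of $(\cM_{Z(L)}^0)^\circ$ is not needed: $\Bun_L^{\vlam_L,\st}$ is irreducible (it is open in the irreducible stack $\Bun_L^{\vlam_L}$), so openness plus nonemptiness already gives density, which is how the paper concludes.
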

\begin{proof}
	The strategy is the following: we start with an arbitrary Frobenius semistable $L$-bundle (see Theorem~\ref{T:Sun-thm sstb elliptic} for existence) and we tensor it with a sufficiently generic $Z:=Z(L)$-bundle of degree zero to produce a regular $L$-bundle.

	The openness follows from the semi-continuity of $\dim(H^0(X,(\fg/\fl)_{\cF_L}))$ so all we need to prove is the non-emptiness of the regular locus.
	
	More precisely, let $\cF_L$ be a Frobenius stable $L$-bundle of degree $\vlam_L$ and $V$ be a highest weight representation of $L$ such that $V_{\cF_L}$ is of degree zero and such that the center $Z=Z(L)$ acts on $V$ by a nontrivial character $\chi$. Corollary~\ref{C:induced is semistable} guarantees that $V_{\cF_L}$ is semistable of degree zero and hence the set of isomorphism classes of line subbundles of degree zero of $V_{\cF_L}$ is finite. 
	
	Now let us consider a $Z$-bundle $\cF_Z$  of degree zero. Using the group morphism $Z\times L\to L$ we can produce a new $L$-bundle that we denote $\cF_L\otimes \cF_Z$ which is still Frobenius semistable of degree $\vlam_G$. 
	The center $Z$ acts on $V$ by $\chi$ so we have that $V_{\cF_L\otimes \cF_Z}=V_{\cF_L}\otimes \chi_{\cF_Z}$, hence the set of line subbundles of degree zero of $V_{\cF_L\otimes \cF_Z}$ is the one for $V_{\cF_L}$ tensored by $\chi_{\cF_Z}$. 
	Since $\chi$ is non-trivial, we obtain that for almost all $Z$-bundles the trivial line bundle $\cO$ is not a line subbundle of $V_{\cF_L\otimes \cF_Z}$, in other words  $H^0(X,V_{\cF_L\otimes\cF_Z})=0$. So we've produced an open dense substack of $L$-bundles $\cF_L$ of degree $\vlam_P$ that are $V$-regular.
	
	Let us apply the previous paragraph to the representation $L\acts \fg/\fl$. It is not a highest weight representation but it admits a filtration with subquotients of highest weight. 
	Let $W$ be such a subquotient. It is of highest weight, say $\alpha$, that belong to the roots of $\fg$.
	By Proposition~\ref{P:slope assoc vbdl} we have $\deg(W_{\cF_L}) = \<\phi_L(\vlam_P),\alpha\>$. By hypothesis (see Theorem~\ref{T:JH for Gbdles} (1)) we get $\deg(W_{\cF_L}) = \<\phi_G(\vlam_G),\alpha\>$ which is zero because by the definition of the slope map $\phi_G(\vlam_G)$ is a cocharacter of the center of $G$, hence it vanishes on the roots of $G$.	
	
	As the weights of $\fg/\fl$ are among the roots of $\fg$, we see that if $W$ is such a subquotient, then $W_{\cF_L}$ is semistable of degree zero (see Lemma~\ref{L:XS-stg-sem-induced} and Proposition~\ref{P:slope assoc vbdl}). Also the central characters are not trivial because the centraliser of $Z(L)$ in $G$ is precisely $L$. Therefore, by the previous paragraph applied to each such subquotient $W$, the substack of $W$-regular $L$-bundles is open and dense and so is their intersection (finite number) which is nothing else than the substack of regular $L$-bundles.
\end{proof}

\begin{proof}
	(of Lemma~\ref{L:gen etale}) Proving generic étaleness is equivalent to proving the map is étale at some point, say $\cF_L$. By looking at the differential of the map we have to show the bijectivity of
	\[H^1(E,\fl_{\cF_L})\to H^1(E,\fg_{\cF_L}).\]
	This is implied by the vanishing of $H^i(E,(\fg/\fl)_{\cF_L}),~i=0,1$. 
	
	Let $\cF_L$ be a regular $L$-bundle (see Lemma~\ref{L:exist_reg_bd} ). Then by definition we have $H^0(E,(\fg/\fl)_{\cF_L})=0$. By Riemann-Roch we get that $H^1(E,(\fg/\fl)_{\cF_L})=\deg((\fg/\fl)_{\cF_L})=0$ where for the last equality we used genus one and Proposition~\ref{P:slope assoc vbdl}.
\end{proof}

\begin{lemma}\label{L:W-invar-W orbits}
	Let $\vlam_G\in\pi_1(G)$ and $L,P,\vlam_L\in\pi_1(L)$ as in Theorem~\ref{T:JH for Gbdles} and put $W:=W_{L,G}$ the relative Weyl group of $L\subset G$. Then the map $\pi:\cM_L^{\vlam_L}\to \cM_G^{\vlam_G}$ is $W$-invariant and the fibers are $W$-orbits. In particular it is a finite map.
\end{lemma}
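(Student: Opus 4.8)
The plan is to establish the three assertions in order, with essentially all of the content in the description of the fibres; the finiteness of $\pi$ will then follow formally.

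For the $W$-invariance I would first make the action of $W=N_G(L)/L$ on $\cM_L^{\vlam_L}$ explicit: a representative $n\in N_G(L)$ gives the conjugation automorphism $c_n:L\to L$, $\ell\mapsto n\ell n^{-1}$, and hence an automorphism $(c_n)_*$ of $\Bun_L^{\vlam_L}$; since inner automorphisms of the structure group act trivially on isomorphism classes of bundles, this descends to an action of $W$ on $\cM_L^{\vlam_L}$, which is the same $W$-action for which the cover of Theorem~\ref{T:JH for Gbdles}(3) is Galois and in particular preserves the component $\vlam_L$. The invariance $\pi\circ w=\pi$ is then the statement that there is a canonical isomorphism of $G$-bundles $(c_n)_*\cF_L\timesL G\simeq\cF_L\timesL G$, induced by translating the $G$-factor by $n$, whose well-definedness uses only that $n$ normalises $L$. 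Passing to $S$-equivalence classes yields $\pi(w\cdot[\cF_L])=\pi([\cF_L])$.

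By the previous paragraph each fibre of $\pi$ is a union of $W$-orbits, so the crux is the reverse inclusion: two stable $L$-bundles with the same image are $W$-conjugate. Here I would first note that the induced bundle $\cF_G:=\cF_L\timesL G$ is \emph{polystable}. Indeed it carries the reduction $\cF_L$ to the Levi $L$, which is stable and satisfies $\phi_L(\vlam_L)=\phi_G(\vlam_G)$ by Theorem~\ref{T:JH for Gbdles}(1) (recall $\pi_1(P)=\pi_1(L)$, so that $\phi_P$ and $\vlam_P$ may be read as $\phi_L$ and $\vlam_L$), which is exactly Ramanathan's criterion for polystability. Hence $\gr(\cF_G)=\cF_G$, and the equality $\pi([\cF_L])=\pi([\cF_L'])$ of $S$-equivalence classes upgrades to an honest isomorphism $\cF_L\timesL G\simeq\cF_L'\timesL G$ of $G$-bundles. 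Fixing such an isomorphism identifies $\cF_L$ and $\cF_L'$ with two reductions of one polystable $G$-bundle $\cF_G$ to $L$, both stable of degree $\vlam_L$; equivalently, with two sections of $\cF_G/L\to X$ in the sense of Remark~\ref{R:reduction-section}.

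The key step, which I expect to be the main obstacle, is then to show that any two such reductions differ by an element of $N_G(L)$ up to an automorphism of $\cF_G$; equivalently, that the stable $L$-reductions of $\cF_G$ of degree $\vlam_L$ form a single $W$-orbit. This is the uniqueness, up to conjugacy, of the Jordan--H\"older reduction of a polystable $G$-bundle: existence and stability of the reduction to $L$ are Theorem~\ref{T:JH for Gbdles}(2),(4), the Levi $L$ is unique up to conjugation, and the remaining ambiguity is precisely $N_G(L)/L=W$, the generic Galois group appearing in Theorem~\ref{T:JH for Gbdles}(3); the freedom in the choice of trivialising isomorphism is absorbed by $\Aut(\cF_G)$, which acts through isomorphisms of the associated $L$-bundle and therefore fixes the point of $\cM_L^{\vlam_L}$. (In the model case $G=\GL_n$ this is just the Krull--Schmidt uniqueness of the decomposition of a polystable bundle into stable summands, with $W=\fS_m$ permuting them, a useful sanity check.) This shows $[\cF_L']=w\cdot[\cF_L]$ for some $w\in W$, so each fibre is a single $W$-orbit and in particular finite. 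Finally, since $\cM_L^{\vlam_L}$ and $\cM_G^{\vlam_G}$ are projective, $\pi$ is proper, and a proper morphism with finite fibres is finite; this gives the last assertion.
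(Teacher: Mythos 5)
Your $W$-invariance argument and the final deduction of finiteness (projectivity gives properness, proper plus quasi-finite gives finite) match the paper. The gap is exactly at the step you yourself flag as the main obstacle: showing that two stable $L$-reductions of the same induced $G$-bundle are $W$-conjugate. You dispose of it by invoking ``uniqueness, up to conjugacy, of the Jordan--H\"older reduction of a polystable $G$-bundle'' and by pointing to Theorem~\ref{T:JH for Gbdles}(3). Neither reference carries the weight. Part (3) says the map $\Bun_P^{\vlam_P,\sst}\to\Bun_G^{\vlam_G,\sst}$ is \emph{generically} Galois with group $W$: it describes the fibre over a dense open locus of the target and says nothing about the remaining fibres, whereas the lemma to be proved is precisely that \emph{every} fibre of the moduli-space map is a single $W$-orbit. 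Reading the generic statement as a fibrewise one is circular. Likewise, Jordan--H\"older (Krull--Schmidt) uniqueness is a theorem for vector bundles, but its analogue for principal $G$-bundles with a fixed Levi $L$ --- that any two stable $L$-reductions of matching degree differ by an element of $N_G(L)$ modulo $\Aut(\cF_G)$ --- is not among the results available in the paper, and in arbitrary characteristic it is not a citation one can wave at; it is the content of the lemma.

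The paper fills this hole with a concrete relative-position argument. Given $\cF_L,\cF_L'$ with $\cF_L\timesL G\simeq\cF_L'\timesL G$, one compares the induced $P$-bundles: their \emph{generic} relative position is some double coset $P\tilde wP$; Lemma~3.5 of \cite{Fratila2016} (using the slope equality $\phi_P(\vlam_P)=\phi_G(\vlam_G)$) upgrades this to relative position $\tilde w$ everywhere, and Lemma~3.7 of loc.\ cit.\ shows $\tilde w$ may be taken in $N_G(L)/L=W$. After conjugating one bundle by $\tilde w$ the two $P$-reductions are in relative position $1$, which forces $\cF_P\simeq\cF_P'$, and quotienting by $R_u(P)$ gives $\cF_L\simeq\cF_L'$. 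So your skeleton is the right one, but the key step needs this (or an equivalent) argument rather than an appeal to the generic Galois property. A secondary, smaller point: your polystability claim for $\cF_L\timesL G$ via ``Ramanathan's criterion'' is plausible and makes explicit something the paper glosses over, but in arbitrary characteristic it too would deserve a justification or a reference compatible with the paper's setting.
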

\begin{proof}
	Both moduli spaces are projective varieties so finiteness follows from quasi-finiteness which in turn follows from the fact that the fibers are $W$-orbits.
	
	Remark that the map $\pi$ is clearly $W$-invariant. Indeed, this is a general fact: an $H$-bundle doesn't change its isomorphism class when acted upon by an inner automorphism of $H$. In our case, the action of an element $w\in W=N_G(L)/L$ on $L$ becomes an inner automorphism of $G$, so the isomorphism class of the induced $G$-bundle is not affected.
	
	Let us prove now that the fibers are $W$-orbits. Let $\cF_L,\cF_L'\in\cM_L^{\vlam_L}$ be two $L$-bundles in the fiber of $\pi$, namely $\cF_L\timesL G\simeq \cF'_L\timesL G$. Let us call $\cF_P$ and $\cF'_P$ the induced $P$-bundles. Notice that we can recover the $L$-bundles as $\cF_P/U=\cF_L$ and $\cF'_P/U=\cF'_L$ where $U$ is the unipotent radical of $P$.

	The $P$-bundle $\cF'_P$ is a reduction of the $G$-bundle $\cF_P\timesP G$ to $P$ so by Remark~\ref{R:reduction-section} we can think of $\cF'_P$ as given by a section $s:E\to \cF_P\timesP G/P$.
	
	By the Bruhat decomposition we have $G = \bigsqcup_{ w\in W_L\backslash W_G/W_L} PwP$ where $W_L = N_L(T)/T$ and $W_G=N_G(T)/T$ are the corresponding Weyl groups of $L$ respectively $G$.

	Let us recall the notion of relative position\footnote{it is a generalization of the notation of relative position of two flags of a vector space} for two reductions to $P$: we say that the bundles $\cF_P$ and $\cF'_P$ are (generically) in relative position $w\in W_L\backslash W_G/W_L$ if the section $s:X\to \cF_P\timesP G/P$ lands (generically) in $\cF_P\timesP P  w P/P$.

	Let us denote by $w\in N_G(T)/T$ a lift of the generic relative position of $\cF_P$ and $\cF'_P$ which we recall are both of degree $\vlam_P$ (see Lemma~\ref{T:JH for Gbdles}). 
	Semistability and the assumption on the degree allow us to use Lemma 3.5 from \cite{Fratila2016} and obtain that the two $P$-bundles are in relative  position $w$  everywhere (not merely generically) and then Lemma 3.7 from loc.cit gives us moreover that $w\in N_G(L)/L=W_{L,G}$. 
	
	To summarize, we have $s:E\to \cF_P\timesP PwP/P$ and the bundle $\cF'_P$ is given by pullback
	\[ \xymatrix{
	\cF'_P\ar@{^(->}[rr]^{i}\ar[d] & &\cF_P\timesP PwP\ar[d]\\
	E\ar[rr]^{s} & &\cF_P\timesP PwP/P
} \]
	
	The quotient map $PwP\to LwL$ is $P\times P$ equivariant so using it in the above diagram and composing with $i$ we get a $P$-equivariant map of bundles $\cF'_P\to \cF_P\timesP LwL$ . 
	It clearly factors through $\cF'_P/U=\cF'_L$ and since the action of $P\times P$ on $LwL$ factors through $L\times L$ we obtain an $L$-equivariant map of $L$-bundles $\cF'_L\to \cF_L\timesL LwL$. 
	It remains to notice that $\cF_L\timesL LwL$ is none other than $w^*(\cF_L)$ and the proof is complete.
\end{proof}

\subsection{Proof of Theorem~\ref{T:thmGalois covering}}
\begin{proof}
	We finish the proof of Theorem~\ref{T:thmGalois covering}.
	The point (1) is contained in Theorem~\ref{T:JH for Gbdles} (4). 
	
	To prove (2) we combine Theorem~\ref{T:JH for Gbdles} (3), Lemma~\ref{L:gen etale} and Lemma~\ref{L:W-invar-W orbits}.
	
	To prove (3), from Lemma~\ref{L:W-invar-W orbits} we have that the natural map $\cM_L^{\vlam_L}\to\cM_G^{\vlam_G}$ factorises through $\cM_L^{\vlam_L}/W_{L,G}$ and moreover the morphism $\cM_L^{\vlam_L}/W_{L,G}\to \cM_G^{\vlam_G}$ is bijective and separable, see Lemma~\ref{L:gen etale}. Since the target is a normal variety (see \cite{GLSS_G-bdles}), we can apply Zariski's main theorem to conclude that it is an isomorphism.
\end{proof}
\begin{rem}\label{R:univ bdle descends to M_G} 
	One might think that if $\cM_L^{\vlam_L}$ has a universal bundle then it descends to $\cM_G^{\vlam_G}$. However, unless $L=G$, this is not the case and one reason is that the dimension of the automorphism group of a $G$-bundle induced from $L$ varies (the jumps arise at non regular $L$-bundles).
\end{rem}

\section{Proof of Theorem~\ref{T:det is iso}} 
Unless otherwise stated, in this section $L$ is a reductive group and $\vlam_L$ is an element of $\pi_1(L)$ such that there exist stable $L$-bundles of degree $\vlL$ where $E$ is an elliptic curve. 
In this section we'll prove \cref{T:det is iso} which asserts that $\det$ is an isomorphism of varieties
\[ \det:\cM_L^{\vlam_L} \to \cM_{L/[L,L]}^{\det\vlam_L} .\]

The idea of the proof is rather simple: we show that the map is bijective on $\bk$-points by exploiting the action of $\cM_{Z(L)}^0$ on 
both varieties; using the differential criterion we show that it's also étale. We have thus a finite, étale map of degree 1, hence an isomorphism.
\subsection{Preliminaries} Recall from Theorem~\ref{T:JH for Gbdles} (5) that the assumption on $L$ forces $L^\ad\simeq\prod_i \PGL_{n_i}$ for some $n_i$. We denote by $\coZ=L/[L,L]$ the co-center of $L$ and by $Z=Z(L)$ the center of $L$.

Let us recall that the natural map $\det:L\to \coZ$ is called the determinant. The homomorphisms $Z\times L\to L$ and $Z\times \coZ\to\coZ$ naturally give actions of $\Bun_{Z}^0$ on $\Bun_L^{\vlam_L,\st}$ and on $\Bun_{\coZ}^{\det(\vlam_L)}$.

A diagonalizable group is a linear algebraic group that is isomorphic to a product of several $\bG_m$ and $\mu_n$ for various $n\ge 2$. The category of diagonalizable groups is anti-equivalent to the category of finitely generated abelian groups, where the functors are given by $D\mapsto \Hom_\gr(D,\bG_m)$ and $\Lambda\mapsto \Spec(k[\Lambda])$.

For a diagonalizable group $D$, we write $\Bun_D^0(X)$ for the moduli stack of $D$-bundles $\cF_D$ on $X$ of degree zero, that is such that for any character $\chi:D\to\bG_m$ the associated line bundle $\chi_{\cF_D}$ is of degree zero. If $D$ is not a torus, then this stack might not be connected. For example if the characteristic of $\bk$ doesn't divide $n$ then for $D=\mu_n$ we have $\Bun_{\mu_n}^0(X)=\Pic^0(X)[n]\times\bB\mu_n$, where $\Pic^0(X)[n]$ is the $n$-torsion in $\Pic^0(X)$. 

We denote by $\cM_D^0(X)$ the moduli \emph{space} of $D$-bundles of degree zero on $X$ in the same sense as above. 
It is a group scheme whose (reduced) connected component of the identity is an abelian variety. 
For example, if $D$ is a torus, then $\cM_D^0\simeq\Pic^0(X)^{\dim(D)}$. 
If $D$ has some finite component then $\cM_D^0$ is a product of an abelian variety and a finite group scheme which is a finite subgroup of an abelian variety. 
For example, for $D=\mu_n$ we have  $\cM_D^0(X)=\Pic^0(X)[n]$. 
Remark that in positive characteristic $\cM_D^0(X)$ might not be reduced.

Here is a basic general lemma:
\begin{lemma}\label{L:iso descends to reductions} 
	Let $H\subset L$ be reductive groups such that $[H,H]=[L,L]$. Let $\cF_H,\cF'_H$ be two $H$-bundles on a proper scheme $Y$. Then if the induced $L$-bundles are isomorphic, the $H$-bundles are also.
\end{lemma}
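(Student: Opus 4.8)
The plan is to turn the statement into one about reductions of structure group. The key observation is that the hypothesis $[H,H]=[L,L]$ forces $H$ to be normal in $L$ with a \emph{torus} quotient; once this is in place, the lemma reduces to the assertion that any two reductions to $H$ of a fixed $L$-bundle are equivalent, a question governed entirely by the group of global maps $Y\to L/H$.

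First I would record the group theory. Set $D:=[L,L]=[H,H]$. Since $D\subseteq H$, the cocenter $H/D$ injects into $L/D$, realizing it as a subtorus of the torus $L/D$; as $L/D$ is abelian, this makes $H$ normal in $L$, and $Q:=L/H\simeq (L/D)/(H/D)$ is again a torus. Thus there is a short exact sequence $1\to H\to L\to Q\to 1$ with $Q$ a torus, and the composite $H\to L\to Q$ is trivial.

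Next, by Remark~\ref{R:reduction-section}, a reduction to $H$ of an $L$-bundle $\cF_L$ is the same datum as a section of $\cF_L/H=\cF_L\oset{L}{\times}Q$, i.e. of the induced $Q$-torsor $\cF_Q$, two reductions being equivalent exactly when they differ by an automorphism of $\cF_L$. Fixing an isomorphism $\cF_H\oset{H}{\times}L\simeq\cF'_H\oset{H}{\times}L=:\cF_L$ thus presents $\cF_H$ and $\cF'_H$ as two sections $s,s'$ of $\cF_Q$. Because $Q$ is abelian, $s'=\delta\cdot s$ for a unique $\delta\in\Mor(Y,Q)$; here properness of $Y$ enters, forcing $\Mor(Y,Q)=Q(\Gamma(Y,\cO_Y))$ with $\Gamma(Y,\cO_Y)$ a finite $\bk$-algebra, so that $\delta$ is ``constant''. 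I would then lift $\delta$ to a central element: the central torus $Z(L)^\circ$ surjects onto $Q$ via $Z(L)^\circ\to L/D\to Q$, and any lift $z$ of $\delta$ defines a global automorphism $\sigma_z$ of $\cF_L$ (right translation by the central $z$, which is $L$-equivariant precisely because $z$ is central) that descends on $\cF_Q$ to translation by $\delta$, hence carries $s$ to $s'$. By Remark~\ref{R:reduction-section} the two reductions are then equivalent, so $\cF_H\simeq\cF'_H$.

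The main obstacle is exactly this last step: showing that the two sections of $\cF_Q$ differ by a \emph{global} element of $Q$ and that this element is realized by a \emph{central} automorphism of $\cF_L$. Two ingredients make it work: (i) properness of $Y$, so a morphism to the affine torus $Q$ is constant on each connected component and $\Mor(Y,Q)=Q(\Gamma(Y,\cO_Y))$; and (ii) reductivity of $L$, i.e. $L=Z(L)^\circ\cdot[L,L]$, which is what guarantees that the radical torus surjects onto $Q$ and hence that $\delta$ lifts centrally. For the bundles of interest $Y$ is reduced and connected (the elliptic curve), so $\Gamma(Y,\cO_Y)=\bk$ and the lift exists over the algebraically closed $\bk$ with no further difficulty; only a little extra care with $\Gamma(Y,\cO_Y)$-points would be needed to cover a general proper $Y$.
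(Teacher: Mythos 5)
Your proof is correct and takes essentially the same route as the paper's: both arguments identify the two reductions with sections of $\cF_L/H$, observe that their difference is a morphism from the proper $Y$ to the affine quotient $L/H$ and hence constant, and then lift that constant to a central element of $L$ whose translation automorphism of the $L$-bundle carries one reduction to the other. Your preliminary observation that $H$ is normal with torus quotient $Q=L/H$ merely makes explicit what the paper gets from $HZ(L)=L$ and the reductivity of $H\subset L$.
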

\begin{proof}
	We will see $\cF_H'$ as a reduction to $H$ of the $L$-bundle $\cF_H\oset{H}{\times} L$ (see Definition~\ref{D:reduction-str grp} and the remark following). So we have a section $s:Y\to \cF_H\oset{H}{\times} L/H$. We need to show that by an automorphism of the $L$-bundle $\cF_H\oset{H}{\times}L$ we can translate it into the trivial section $s_0:Y=\cF_H\oset{H}{\times}H/H\hookrightarrow \cF_H\oset{H}{\times}L$.
	
	From the assumptions we have $H Z(L)=L$ hence $H$ acts trivially on $L/H$. Therefore the section $s$ can be seen as a section $s:Y\to Y\times L/H$, i.e. as a map $Y\to L/H$. As $H, L$ are reductive the quotient $L/H$ is an affine variety so $s$ must be constant, say equal to $\overline z$,  because $Y$ is proper.
	
	The assumptions imply the surjectivity $Z(L)\twoheadrightarrow L/H$ so we can take $z\in Z(L)$ a lift of $\overline z$. 
	The element $z$ being in the center of $L$ gives an automorphism, call it $\theta_z$, of $\cF_H\oset{H}{\times}L$ such that $\theta_z^{-1}(s)=s_0$. 
	In other words, the section $s$ gives an $H$-bundle isomorphic to $\cF_H$. 
\end{proof}

\begin{rem}
	The above Lemma is false if $Y$ is not proper (think of modules over a Dedeking ring) and it is also false if $L/H$ is not affine (two filtrations of the same vector bundle need not be isomorphic).
\end{rem}

\subsection{Diagonalizable groups} We collect here some technical lemmas on diagonalizable groups and bundles over a smooth projective curve $X$. 
We advise the reader to skip this section and come back to it when it is referred to.

Some notation: if $C$ is a normal subgroup of $G_1$ and $G_2$ then $C\hookrightarrow G_1\times G_2$ given by $c\mapsto (c,c\inv)$ is a normal subgroup. 
We denote by $G_1\oset{C}{\times}G_2$ the quotient group $(G_1\times G_2)/C$.

\begin{lemma}\label{L:embed [L,L] s.c.}
	Let $L$ be a reductive group such that $L^\ad\simeq\prod_i\PGL_{n_i}$. Assume that $[L,L]$ is simply connected. Then there exists a torus $T'$ such that $L\hookrightarrow \prod_i \GL_{n_i}\times T'$.
\end{lemma}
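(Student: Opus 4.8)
The plan is to produce the embedding from two kinds of data: the standard $n_i$-dimensional representations of the factors of the derived group, extended from $[L,L]$ to all of $L$, packaged into a map to $\prod_i\GL_{n_i}$, together with the abelianization map $L\to L/[L,L]$, which will supply the torus $T'$. First I would pin down the derived group. Since $L^\ad\simeq\prod_i\PGL_{n_i}$ and the composite $[L,L]\to L\to L^\ad$ is surjective with finite central kernel (because $L=[L,L]\cdot Z(L)^\circ$ and $Z(L)^\circ$ dies in $L^\ad$), the adjoint group of $[L,L]$ is again $\prod_i\PGL_{n_i}$; as $[L,L]$ is assumed simply connected this forces $[L,L]\simeq\prod_i\SL_{n_i}$. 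Write $Z=Z(L)$ and let $Z^{\circ}$ be its connected component, a torus, and recall the central isogeny $m\colon[L,L]\times Z^{\circ}\to L$ with finite kernel $\mu$. The second projection identifies $\mu$ with a finite diagonalizable subgroup scheme of $Z^{\circ}$, equal inside $L$ to $[L,L]\cap Z^{\circ}\subseteq Z([L,L])=\prod_j\mu_{n_j}$.

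The key step is to extend each standard representation to $L$. For fixed $i$, consider the representation $\sigma_i\boxtimes\chi_i$ of $[L,L]\times Z^{\circ}$ on $\bk^{n_i}$, where $\sigma_i$ is the standard representation of the $i$-th factor $\SL_{n_i}$ (and the other factors act trivially) and $\chi_i\colon Z^{\circ}\to\bG_m$ is a character still to be chosen. On the finite central group $\mu$ both factors act by scalars, so $\sigma_i\boxtimes\chi_i$ is trivial on $\mu$ exactly when $\chi_i|_\mu$ equals the single prescribed character of $\mu$ coming from the central character of $\sigma_i$. Such a $\chi_i$ exists because $\mu\hookrightarrow Z^{\circ}$ is a closed immersion of diagonalizable groups, so that restriction of characters $\bX^*(Z^{\circ})\twoheadrightarrow\bX^*(\mu)$ is surjective and every character of $\mu$ extends. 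By fppf descent along $m$ the representation then descends to $\rho_i\colon L\to\GL_{n_i}$, and since the inclusion $[L,L]\to L$ is $h\mapsto m(h,1)$, by construction $\rho_i|_{[L,L]}$ is the standard map onto the $i$-th factor.

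Finally I would assemble the embedding. Set $T'=L/[L,L]$, which is a torus since $L$ is reductive, and let $\det\colon L\to T'$ be the quotient map; then form
\[\Phi=\Big(\prod_i\rho_i,\ \det\Big)\colon L\longrightarrow \prod_i\GL_{n_i}\times T'.\]
A homomorphism of affine algebraic groups with trivial scheme-theoretic kernel is a closed immersion, so it suffices to check $\ker\Phi=1$. One has $\ker\det=[L,L]$, hence $\ker\Phi=\ker\!\big(\prod_i\rho_i\big)\cap[L,L]$; but $\prod_i\rho_i$ restricted to $[L,L]$ is precisely the standard closed immersion $\prod_i\SL_{n_i}\hookrightarrow\prod_i\GL_{n_i}$, which has trivial kernel. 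Therefore $\ker\Phi=1$ and $\Phi$ is the desired embedding.

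I expect the main obstacle to be the extension step of the second paragraph: one must produce the characters $\chi_i$ compatibly with the finite central data, and it is exactly here that both hypotheses are used, namely simple-connectedness to identify $[L,L]=\prod_i\SL_{n_i}$ (so that the standard representations are available) and the duality for diagonalizable groups to extend a character from $\mu$ to $Z^{\circ}$. Everything else is formal. I would emphasize that the argument is characteristic-free, since it relies only on the structure theory of reductive groups and on the anti-equivalence between diagonalizable groups and finitely generated abelian groups, both of which hold over any field.
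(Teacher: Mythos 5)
Your proof is correct. At its core it runs on the same engine as the paper's: present $L$ as a quotient of $[L,L]\times(\text{central diagonalizable group})$ by a finite central subgroup scheme, and use the anti-equivalence between diagonalizable groups and finitely generated abelian groups to extend the central characters of the standard representations across that finite subgroup. The packaging, however, is dual. The paper writes $L=\prod_i\SL_{n_i}\oset{C}{\times}Z(L)$ with $C=\prod_i\mu_{n_i}$ and works purely on character lattices: it lifts the canonical surjection $\prod_i\bZ\twoheadrightarrow\prod_i\bZ/n_i$ through $\bX^*(Z(L))\twoheadrightarrow\bX^*(C)$ (using freeness of $\prod_i\bZ$), then adjoins a factor $\bZ^r$ surjecting onto the kernel so that the dual map $Z(L)\hookrightarrow\prod_i\bG_m\times T'$ is a closed immersion compatible with $C$; the embedding of $L$ then follows by functoriality of the amalgam. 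You instead work with representations: you extend each standard representation from $[L,L]\simeq\prod_i\SL_{n_i}$ to $L$ by choosing characters of $Z^\circ$ extending the central character on $\mu=[L,L]\cap Z^\circ$ (the same duality fact, in the form ``restriction of characters along a closed immersion of diagonalizable groups is surjective''), you take the canonical torus $T'=L/[L,L]$, and you conclude via the criterion that a homomorphism of affine algebraic groups over a field with trivial scheme-theoretic kernel is a closed immersion. Your route buys a canonical $T'$ (the cocenter, rather than an auxiliary lattice choice) and avoids the amalgam bookkeeping; the price is invoking the trivial-kernel criterion and carrying out the kernel computation $\ker\Phi=\ker(\prod_i\rho_i)\cap[L,L]$ explicitly, which is the step the paper leaves implicit in the functoriality of $\oset{C}{\times}$. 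Both arguments are characteristic-free, and your insistence on scheme-theoretic kernels (so that $\mu$ may be infinitesimal in positive characteristic) is exactly the right level of care.
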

\begin{proof} The proof is essentially linear algebra.
	
We have $L=\prod_i \SL_{n_i}\oset{C}{\times} Z(L)$, where $C=\prod_i \mu_{n_i}$. We put $\can:C\hookrightarrow  \prod_i \bG_m$ the canonical inclusion.

There is a torus $T'$ and a map $\phi:Z(L)\hookrightarrow \prod_i \bG_m\times T'$ such that the following diagram commutes:
\[
\begin{tikzcd}
C \rar[hook] \dar[hook, swap]{\can} & Z(L) \dlar[hook]\\
\prod_i \bG_m \times T' 
\end{tikzcd}
\]

Indeed, using the equivalence of diagonalizable groups with finitely generated abelian groups, we need to show that it exists $\phi:\prod_i \bZ \times \bZ^r \twoheadrightarrow M$ such that the following diagram commutes 
\[\xymatrix{
\prod_i \bZ/n_i & M \ar@{->>}[l]^u\\
\prod_i \bZ \times \bZ^r \ar@{->>}[u]^{\can} \ar@{->>}[ur]_\phi
}\]
where $M$ is the abelian group of characters of $Z(L)$.

This can be done easily as follows: first take $r=0$ and use that $\prod_i \bZ$ is free and $u$ is surjective. Then, for a convenient $r\ge 0$ add $\bZ^r$ mapping surjectively onto $\ker(u)$.
\end{proof}

\begin{lemma}
	\label{L:proj to L when [L,L] non s.c.} Let $L$ be an arbitrary reductive group. Then there exists a central extension
	\[1\to T'\to \hat{L}\to L\to 1\]
	with $[\hat L,\hat L]$ simply connected and $T'$ a torus. In particular, since $T'$ is connected, we have $\pi_1(\hat L)\twoheadrightarrow \pi_1(L)$.
\end{lemma}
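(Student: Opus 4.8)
The plan is to realize $\hat L$ as a \emph{$z$-extension} of $L$: a central extension by a torus whose derived group is the simply connected cover of $[L,L]$. Write $L_{\mathrm{der}}=[L,L]$ and let $\pi\colon G_{\sc}\to L_{\mathrm{der}}$ be its simply connected cover, a central isogeny whose kernel $\mu:=\ker\pi$ is a finite central subgroup scheme of $G_{\sc}$. Let $S:=Z(L)^\circ$ be the connected center, a torus. I would first observe that the multiplication map $q\colon G_{\sc}\times S\to L$, $(g,s)\mapsto\pi(g)\,s$, is surjective: the composite $S\hookrightarrow L\to L/[L,L]$ is an isogeny of tori of equal dimension, hence surjective, so $[L,L]\cdot S=L$. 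Its kernel $\Gamma=\{(g,s):\pi(g)=s^{-1}\}$ is finite, since then $\pi(g)\in S\cap L_{\mathrm{der}}$, which is finite, and it is central, being contained in $Z(G_{\sc})\times S$.

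Next I would embed $\Gamma$ into a torus. Since $Z(G_{\sc})$ lies in a maximal torus of $G_{\sc}$, the group $Z(G_{\sc})\times S$ is diagonalizable, hence so is the finite subgroup scheme $\Gamma$. By the anti-equivalence between diagonalizable groups and finitely generated abelian groups, any surjection $\bZ^r\twoheadrightarrow\Hom(\Gamma,\Gm)$ — one exists because the target is finite — dualizes to a closed embedding $j\colon\Gamma\hookrightarrow T':=\Gm^r$. I then set
\[\hat L:=(G_{\sc}\times S\times T')/\Gamma,\]
where $\Gamma$ is embedded as the graph $\gamma\mapsto(\gamma,j(\gamma))$ in $(G_{\sc}\times S)\times T'$. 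The map $q$ descends to a surjection $\hat L\to L$, $[(g,s,t)]\mapsto\pi(g)\,s$, and a direct check shows its kernel is exactly the image of $T'$, with $T'\hookrightarrow\hat L$ injective because $\Gamma\cap T'=1$. This yields the central extension $1\to T'\to\hat L\to L\to 1$.

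It then remains to identify the derived group and to deduce the statement on fundamental groups. The images of $S$ and $T'$ are central in $\hat L$, while $G_{\sc}$ injects (since $j$ injective gives $\Gamma\cap(G_{\sc}\times1\times1)=1$); as $G_{\sc}$ is semisimple, hence perfect, $[\hat L,\hat L]$ is the image of $[G_{\sc},G_{\sc}]=G_{\sc}$, which is simply connected, as desired. For the last assertion, because $T'$ is a torus the induced map of cocharacter lattices $\bX_*(\hat T)\to\bX_*(T_L)$ on maximal tori is surjective, and the coroots of $\hat L$ map onto those of $L$ (the derived groups being related by the isogeny $\pi$); passing to the quotients defining $\pi_1$ gives the surjection $\pi_1(\hat L)\twoheadrightarrow\pi_1(L)$.

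The construction itself is routine once set up this way, so the \emph{main obstacle} is bookkeeping rather than strategy, and it is concentrated in positive characteristic. There $\mu$, $Z(G_{\sc})$ and $\Gamma$ may be non-reduced finite group schemes (e.g.\ containing factors $\mu_p$), so I must be careful that each step — taking the simply connected cover, recognizing $\Gamma$ as diagonalizable, embedding it into a torus, and forming the fppf quotient by a finite central subgroup scheme — is valid scheme-theoretically and still produces a \emph{reductive} group $\hat L$ with derived group \emph{exactly} $G_{\sc}$. The one genuinely structural point to double-check is that the toral central factors $S,T'$ contribute nothing to $[\hat L,\hat L]$ and that $G_{\sc}$ remains its own derived group as a group scheme, both of which follow from $G_{\sc}$ being perfect.
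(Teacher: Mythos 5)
Your proof is correct and takes essentially the same approach as the paper: the paper likewise presents $L$ as a quotient of $[L,L]^\sc\times Z$ by a finite central diagonalizable subgroup (using the full center $Z=Z(L)$ where you use $Z(L)^\circ$), embeds that finite subgroup into a torus $T'$, and defines $\hat L$ as the quotient of $[L,L]^\sc\times Z\times T'$ by the diagonally embedded finite subgroup, exactly as in your $z$-extension construction. The remaining differences (full versus connected center, and your explicit cocharacter-lattice verification of $\pi_1(\hat L)\twoheadrightarrow\pi_1(L)$) are cosmetic.
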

\begin{proof}
	We write $L=[L,L]\oset{C}{\times}Z=[L,L]^\sc\oset{\tilde{C}}{\times}Z$ where $\tilde{C}=Z([L,L]^\sc)$ and $[L,L]^\sc$ is the simply connected cover of $[L,L]$. 
	Let us choose a torus $T'$ and an inclusion $\tilde{C}\hookrightarrow T'$. We define the following group
	\[\hat L:=[L,L]^\sc\oset{\tilde C}{\times}(Z\times T')\]
	where $\tilde{C}\to Z\times T'$ is the diagonal homomorphism (injective!). Clearly $[\hat L,\hat L]=[L,L]^\sc$.
	The natural homomorphism $\hat L\to L$, forgetting the factor $T'$, is surjective and its kernel is exactly $T'$.
\end{proof}

\begin{lemma}\label{L:inj-diaggrps-inject}
	Let $Z\hookrightarrow Z'$ be an injective morphism of diagonalizable groups. Then 
	\[ \Bun_Z^0(X)\to \Bun_{Z'}^0(X) \text{ is injective on objects.} \]
\end{lemma}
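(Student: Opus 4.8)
The plan is to pass everything through the anti-equivalence between diagonalizable groups and finitely generated abelian groups recalled above, under which an injection of groups becomes a surjection of character lattices; the induced-bundle map then becomes precomposition with a surjection, which is trivially injective.

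Write $M := \Hom_\gr(Z,\Gm)$ and $M' := \Hom_\gr(Z',\Gm)$ for the character lattices. Since $Z\hookrightarrow Z'$ is a closed immersion of diagonalizable groups, the restriction map $\rho\colon M'\to M$ is surjective. The essential input is a natural identification of the set of isomorphism classes of degree-zero $D$-bundles on $X$ with $\Hom_\bZ(\Hom_\gr(D,\Gm),\Pic^0(X))$, sending a bundle $\cF_D$ to the homomorphism $\chi\mapsto \chi_{\cF_D}$ recording its associated line bundles (this is a group homomorphism because forming associated line bundles is monoidal). Granting this, the induced-bundle map corresponds to precomposition with $\rho$: for $\chi'\in M'$ one has $\chi'_{\cF_Z\oset{Z}{\times}Z'}=(\chi'|_Z)_{\cF_Z}=(\rho(\chi'))_{\cF_Z}$, so $\cF_Z$ and its induced bundle correspond to $f$ and $f\circ\rho$ respectively. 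As $\rho$ is surjective, $f\mapsto f\circ\rho$ is injective, which is exactly the asserted injectivity on objects.

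To prove the identification I would use the structure theorem to write $M\cong\bZ^r\oplus\bigoplus_j\bZ/n_j$, so that $D\cong\Gm^r\times\prod_j\mu_{n_j}$, and compute isomorphism classes of torsors factor by factor. The $\Gm$-factors contribute $\Pic(X)$, cut down to $\Pic^0(X)$ by the degree-zero condition; a $\mu_n$-factor contributes $H^1_{\mathrm{fppf}}(X,\mu_n)$, and here is the only genuinely characteristic-sensitive point. Using the fppf Kummer sequence $1\to\mu_n\to\Gm\xrightarrow{n}\Gm\to1$ together with the fact that $H^0(X,\Gm)=\bk^*$ is $n$-divisible (as $\bk$ is algebraically closed, in any characteristic, even when $p\mid n$), one gets $H^1_{\mathrm{fppf}}(X,\mu_n)\cong\Pic(X)[n]=\Pic^0(X)[n]$, with the class of the tautological character $\mu_n\hookrightarrow\Gm$ being exactly the Kummer class; this matches the computation $\Bun_{\mu_n}^0(X)=\Pic^0(X)[n]\times\bB\mu_n$ recorded above. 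Assembling the factors gives the desired bijection with $\Hom_\bZ(M,\Pic^0(X))=\Pic^0(X)^r\times\prod_j\Pic^0(X)[n_j]$. I expect the main obstacle to be precisely this last identification in positive characteristic, i.e. verifying that a degree-zero torsion bundle is detected by its associated line bundle when $p\mid n$; once the fppf Kummer sequence and the divisibility of $\bk^*$ are invoked, the remaining steps are formal.
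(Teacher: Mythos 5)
Your proof is correct, but it takes a genuinely different route from the paper's. The paper argues geometrically and avoids any classification of torsors: given an isomorphism $\theta\colon \cF\oset{Z}{\times}Z'\simeq\cF'\oset{Z}{\times}Z'$, it views $\theta$ as a $Z$-equivariant map $\cF\to\cF'\oset{Z}{\times}Z'$, quotients by $Z$ to obtain a section of $X\times Z'/Z\to X$, observes that this section is constant because $Z'/Z$ is affine and $X$ is proper, and then translates $\theta$ by a lift $z_0\in Z'$ of that constant (commutativity is used here) so that its image lands in $\cF'\oset{Z}{\times}Z=\cF'$; note this works over any proper base, not just a curve. You instead prove a stronger, quantitative statement: a natural bijection between isomorphism classes in $\Bun_D^0(X)$ and $\Hom_\bZ(\Hom_\gr(D,\Gm),\Pic^0(X))$, from which injectivity on objects follows formally because $\Hom_\gr(Z',\Gm)\to\Hom_\gr(Z,\Gm)$ is surjective. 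Your identification is correct, and you correctly isolate the only characteristic-sensitive point: for $p\mid n$ the group $\mu_n$ is non-smooth, and one needs the fppf Kummer sequence together with the divisibility of $\bk^*$ to get $H^1_{\mathrm{fppf}}(X,\mu_n)\simeq\Pic(X)[n]=\Pic^0(X)[n]$. Two small points you should make explicit: the Kummer map to $\Pic(X)$ is precisely ``associated line bundle of the tautological character'' (so that your intrinsic map is what the Kummer sequence computes), and the map $\cF_D\mapsto(\chi\mapsto\chi_{\cF_D})$ is compatible with the chosen splitting $D\simeq\Gm^r\times\prod_j\mu_{n_j}$, so the factor-by-factor computation really does compute it. What your approach buys: since $\Pic^0(X)(\bk)$ is a divisible group, hence an injective $\bZ$-module, the same bijection immediately yields Lemma~\ref{L:diag-grps-surj} (surjectivity on objects for surjections of diagonalizable groups), replacing the paper's embedding-and-pushout argument there, and it moreover identifies the image of the map in the present lemma. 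What the paper's approach buys: brevity, no cohomological input whatsoever, and validity over an arbitrary proper base in place of the curve $X$.
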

\begin{proof}
	Let $\cF,\cF'$ be two $Z$-bundles such that there exists 
	\[ \theta:\cF\oset{Z}{\times}Z'\simeq \cF'\oset{Z}{\times}Z' \text{ isomorphism of } Z'\text{-bundles}.  \]
	This is equivalent to having 
	\[ \theta:\cF\to \cF'\oset{Z}{\times}Z' \text{ a } Z\text{-equivariant bundle map.} \]	
	Taking the quotient by $Z$ we obtain 
	\[ \overline{\theta}:X\to \cF'\oset{Z}{\times}Z'/Z=X\times Z'/Z \text{ a morphism over } X. \]
	Since $Z'/Z$ is affine (diagonalizable groups) and $X$ is proper, the map $\overline{\theta}$ must be constant, say equal to $z_0Z$. 
	Due to the commutativity of the groups we have that $z_0^{-1}\theta:\cF\to \cF'\oset{Z}{\times}Z'$ is a $Z$-equivariant morphism whose image is in $\cF'\oset{Z}{\times}Z=\cF'$. 
	In other words $z_0^{-1}\theta$ restricts to an isomorphism $\cF\simeq \cF'$ of $Z$-bundles.
	
	In a similar way one can show the injectivity at the level of automorphisms although we will not need it.
\end{proof}

\begin{lemma}\label{L:diag-grps-surj}
	Let $\Gamma'\twoheadrightarrow \Gamma$ be a surjective map of diagonalizable groups. Then 
	\[ \Bun_{\Gamma'}^0(X)\to \Bun_{\Gamma}^0(X) \]
	is surjective on objects.
\end{lemma}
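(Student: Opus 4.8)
The plan is to trade this geometric surjectivity for a purely lattice-theoretic one, by identifying degree-zero bundles under a diagonalizable group with homomorphisms into the divisible group $\Pic^0(X)$.

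First I would read ``surjective on objects'' as surjectivity on isomorphism classes, exactly as in Lemma~\ref{L:inj-diaggrps-inject}: the goal is that every degree-zero $\Gamma$-bundle $\cF$ is isomorphic to $\cF'\oset{\Gamma'}{\times}\Gamma$ for some degree-zero $\Gamma'$-bundle $\cF'$. Write $\Lambda=\Hom_\gr(\Gamma,\bG_m)$ and $\Lambda'=\Hom_\gr(\Gamma',\bG_m)$ for the character groups; the surjection $\Gamma'\twoheadrightarrow\Gamma$ dualizes to an injection $\Lambda\hookrightarrow\Lambda'$. The crux is the natural identification, for any diagonalizable $D$ with character group $\Lambda_D$,
\[
	\{\text{degree-zero } D\text{-bundles on } X\}/{\simeq}\ \xrightarrow{\ \sim\ }\ \Hom_\gr\!\bigl(\Lambda_D,\Pic^0(X)\bigr),\qquad \cF_D\longmapsto\bigl(\chi\mapsto[\chi_{\cF_D}]\bigr),
\]
together with the fact that push-forward along $\Gamma'\twoheadrightarrow\Gamma$ corresponds, under this identification, to restriction of homomorphisms along $\Lambda\hookrightarrow\Lambda'$.

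To establish the identification I would decompose $D\simeq\bG_m^{\,a}\times\prod_i\mu_{n_i}$ and argue factor by factor. For a $\bG_m$-factor the degree-zero bundles are precisely $\Pic^0(X)=\Hom_\gr(\bZ,\Pic^0(X))$; for a $\mu_n$-factor the fppf Kummer sequence $1\to\mu_n\to\bG_m\xrightarrow{\,n\,}\bG_m\to1$ gives $H^1_{\mathrm{fppf}}(X,\mu_n)\simeq\Pic(X)[n]=\Pic^0(X)[n]=\Hom_\gr(\bZ/n,\Pic^0(X))$, the potential contribution $k^\times/(k^\times)^n$ vanishing because $k$ is algebraically closed. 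This matches the two sides on each factor. Naturality in $D$ is immediate: for $f\colon\Gamma'\to\Gamma$ and $\cG:=\cF'\oset{\Gamma'}{\times}\Gamma$ one has $\chi_{\cG}=(\chi\circ f)_{\cF'}$, which is exactly precomposition with $f^{*}=(\Lambda\hookrightarrow\Lambda')$.

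It then remains to observe that $\Pic^0(X)$ is the group of $k$-points of an abelian variety over the algebraically closed field $k$, hence a divisible abelian group and therefore an injective $\bZ$-module. Thus $\Hom_\gr(-,\Pic^0(X))$ is exact and sends the injection $\Lambda\hookrightarrow\Lambda'$ to a surjection $\Hom_\gr(\Lambda',\Pic^0(X))\twoheadrightarrow\Hom_\gr(\Lambda,\Pic^0(X))$. Given $\cF$ I would lift its associated homomorphism on $\Lambda$ to one on $\Lambda'$, realize the latter by a degree-zero $\Gamma'$-bundle $\cF'$ using surjectivity of the identification, and invoke its injectivity for $\Gamma$ to conclude $\cF'\oset{\Gamma'}{\times}\Gamma\simeq\cF$. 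The main obstacle lies in the finite factors: the entire argument hinges on divisibility of $\Pic^0(X)$, and the degree-zero hypothesis is precisely what allows us to land in the divisible group $\Pic^0(X)$ rather than in $\Pic(X)$, whose free (degree) part would otherwise obstruct the lift.
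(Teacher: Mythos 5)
Your proof is correct, but it takes a genuinely different route from the paper's. The paper argues geometrically: it first treats the case of tori (where surjectivity follows from divisibility of $\Pic^0(X)$, just as in your argument), and then reduces the general case to it by embedding $\Gamma'$ into a torus $T'$ and forming the pushout torus $T=(T'\times\Gamma)/\Gamma'$; since the category of diagonalizable groups is abelian, the pushout square is also cartesian, so a reduction of a $T'$-bundle to $\Gamma'$ is the same thing as a reduction of its image to $\Gamma$, and the injectivity Lemma~\ref{L:inj-diaggrps-inject} then closes the argument. You instead linearize the whole problem: you classify degree-zero $D$-bundles by $\Hom_\gr(\Lambda_D,\Pic^0(X))$, naturally in $D$ --- verified on $\bG_m$-factors directly and on $\mu_n$-factors via the fppf Kummer sequence, where algebraic closedness of $\bk$ kills the $\bk^\times/(\bk^\times)^n$ term (this works in any characteristic, including $p\mid n$, since the Kummer sequence is fppf-exact and $\bk^\times$ is divisible) --- after which the lemma becomes pure homological algebra: $\Pic^0(X)$ is divisible, hence an injective $\bZ$-module, so restriction along $\Lambda\hookrightarrow\Lambda'$ is surjective. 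Both proofs ultimately rest on the same divisibility; the difference is how the finite factors are handled: the paper hides the $\mu_n$'s inside tori via the pushout trick and reuses its injectivity lemma, while you treat them head-on with Kummer theory. Your route proves more --- the natural bijection $\Bun_D^0(X)/{\simeq}\ \cong\ \Hom_\gr(\Lambda_D,\Pic^0(X))$ yields Lemma~\ref{L:inj-diaggrps-inject} and the present lemma simultaneously --- at the modest cost of invoking fppf cohomology explicitly, which the paper's more self-contained torsor-theoretic argument avoids.
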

\begin{proof}
(a) First we suppose $\Gamma',\Gamma$ to be tori. 
Since $\Bun^0_{\bG_m}(X)\simeq \Pic^0(X)\times \bB\bG_m$ and that abelian varieties are divisible groups we get the desired surjectivity.

(b) For the general case, let $\Gamma'\hookrightarrow T'$ be an embedding into a torus. Define $T:=(T'\times \Gamma)/\Gamma'$, i.e. the pushout of $T'$ and $\Gamma$ along $\Gamma'$:
\begin{equation}\label{E:diag-diag-gr-surj}
\begin{tikzcd}        
	\Gamma' \arrow[hook]{d}\arrow[twoheadrightarrow]{r} & \Gamma\arrow[hook]{d}\\
	T'\arrow[r,twoheadrightarrow]{}{q} & T   
	\end{tikzcd}
\end{equation}
	
Since the category of diagonalizable groups is abelian (being anti-equivalent to the category of finitely generated abelian groups) this square is also cartesian\footnote{One could also just check it by hand easily.}, i.e. $\Gamma'=q\inv (\Gamma)$.

From \eqref{E:diag-diag-gr-surj} we get a commutative diagram
\begin{equation}\label{E:}
\begin{tikzcd}
\Bun_{\Gamma'}^0(X)\rar \dar[hook] & \Bun_{\Gamma}^0(X) \dar[hook]\\
\Bun_{T'}^0(X) \rar[twoheadrightarrow]{q} & \Bun_T^0(X)
\end{tikzcd}
\end{equation}
where the vertical maps are injective due to Lemma~\ref{L:inj-diaggrps-inject}. The surjectivity of $q$ follows from (a) above. 

Let $\cF'\in \Bun_{T'}^0(X)$ be such that $q(\cF')$ has a reduction to $\Gamma$, i.e. the bundle $q(\cF')/\Gamma$ has a section. 
However, since the diagram \eqref{E:diag-diag-gr-surj} is cartesian we have $q(\cF')/\Gamma=\cF'/\Gamma'$. 
But having a section of $\cF'/\Gamma'$ is equivalent to giving a reduction of $\cF'$ to $\Gamma'$. 
The surjectivity of $q$ implies the desired surjectivity.
\end{proof}

\subsection{The action of the center} 
In this subsection we work over an elliptic curve. We'll analyze in detail the stabilizer of $\cM_{Z(L)}^0$ acting on $\cM_L^{\vlam_L}$ for $L,\vlam_L$ such that there exist stable bundles on the elliptic curve (see Theorem~\ref{T:thmGalois covering}(1)). 

\begin{lemma}
	\label{L:stab M_Z acts on M_L}
	Let $L,\vlam_L$ be as in Theorem~\ref{T:JH for Gbdles} (5). Then the stabilizer of $\cM_{Z(L)}^0(\bk)$ acting on $\cM_L^{\vlam_L}(\bk)$ is precisely $\cM_{Z([L,L])}(\bk)$.
\end{lemma}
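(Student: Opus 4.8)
The plan is to establish the two inclusions separately, after fixing once and for all a stable $L$-bundle $\cF_L$ of degree $\vlam_L$; this is harmless since $\cM_{Z(L)}^0(\bk)$ is abelian and, by Corollary~\ref{C:M_Z acts trans on M_L}, acts transitively on $\cM_L^{\vlam_L}(\bk)$, so every point has the same stabiliser. Write $Z:=Z(L)$, and recall that the existence of stable bundles forces $L^\ad\simeq\prod_i\PGL_{n_i}$ with $\gcd(d_i,n_i)=1$ (Theorem~\ref{T:JH for Gbdles}~(5)). The subgroup $Z([L,L])=Z\cap[L,L]$ is finite, and the determinant identifies $Z/Z([L,L])$ with the torus $\coZ=L/[L,L]$, giving a central extension $1\to Z([L,L])\to Z\to\coZ\to 1$. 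By Theorem~\ref{T:BH-Bun_Z-torsor} the map $\Bun_Z^0\to\Bun_\coZ^0$ is a $\Bun_{Z([L,L])}^0$-torsor, and together with the injectivity of Lemma~\ref{L:inj-diaggrps-inject} this shows that, on $\bk$-points, $\cM_{Z([L,L])}(\bk)$ is precisely the kernel of $\det_*\colon\cM_Z^0(\bk)\to\cM_\coZ^0(\bk)$.

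For the inclusion $\mathrm{Stab}\subseteq\cM_{Z([L,L])}(\bk)$ I would use the determinant map $\det\colon\cM_L^{\vlam_L}\to\cM_\coZ^{\det\vlam_L}$, which is equivariant for $\det_*\colon\cM_Z^0\to\cM_\coZ^0$ because $\det(\cF_L\otimes\cF_Z)\simeq\det(\cF_L)\otimes\det_*(\cF_Z)$ (I only use that this map is well defined and equivariant, not that it is an isomorphism, so there is no circularity with Theorem~\ref{T:det is iso}). If $\cF_Z$ stabilises $\cF_L$, then $\det_*(\cF_Z)$ stabilises $\det(\cF_L)$; but $\coZ$ is a torus, so $\cM_\coZ^{\det\vlam_L}(\bk)$ is a free $\cM_\coZ^0(\bk)$-space, forcing $\det_*(\cF_Z)$ to be trivial. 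By the first paragraph this places $\cF_Z$ in $\cM_{Z([L,L])}(\bk)$.

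The reverse inclusion is the main point, and where I expect the real work. First, in the case where $[L,L]$ is simply connected, so $[L,L]=\prod_i\SL_{n_i}$ and $Z([L,L])=\prod_i\mu_{n_i}$, I would fix an embedding $L\hookrightarrow K:=\prod_i\GL_{n_i}\times T'$ from Lemma~\ref{L:embed [L,L] s.c.}. Then $\cF_L$ induces on each $\GL_{n_i}$-factor a stable rank-$n_i$ bundle $\cV_i$ (stability being insensitive to $\GL_{n_i}\to\PGL_{n_i}$), and tensoring by $\cF_Z$ coming from $\prod_i\mu_{n_i}$ replaces $\cV_i$ by $\cV_i\otimes\ell_i$ with $\ell_i\in\Pic^0(E)[n_i]$ while acting trivially on $T'$; by Theorem~\ref{T:At-coprime-unique-det}~(2) we have $\cV_i\otimes\ell_i\simeq\cV_i$, so $\cF_L\otimes\cF_Z$ and $\cF_L$ induce isomorphic $K$-bundles. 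Since $[K,K]=\prod_i\SL_{n_i}=[L,L]$ and $E$ is proper, Lemma~\ref{L:iso descends to reductions} descends this isomorphism to $L$. For general $[L,L]$ I would reduce to this case using Lemma~\ref{L:proj to L when [L,L] non s.c.}, which gives $1\to T'\to\hat L\to L\to 1$ with $[\hat L,\hat L]$ simply connected: lift $\cF_L$ to a stable $\hat L$-bundle $\cF_{\hat L}$ (possible as $\hat L^\ad=L^\ad$), lift $\cF_Z$ to a $Z([\hat L,\hat L])$-bundle $\hat\cF_Z$ via the surjection $Z([\hat L,\hat L])\twoheadrightarrow Z([L,L])$ and Lemma~\ref{L:diag-grps-surj}, apply the simply connected case to get $\cF_{\hat L}\otimes\hat\cF_Z\simeq\cF_{\hat L}$, and push forward along $\hat L\to L$. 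The delicate points here are the isogeny bookkeeping for $[L,L]$ and checking that induction along $\hat L\to L$ is compatible with the tensoring action; by contrast the containment $\mathrm{Stab}\subseteq\cM_{Z([L,L])}$ falls out cleanly from freeness of the torus action.
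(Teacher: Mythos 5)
Your proposal is correct and follows essentially the same route as the paper's proof: the inclusion of the stabiliser into $\cM_{Z([L,L])}(\bk)$ via equivariance of $\det$, and the reverse inclusion split into the simply connected case (embedding $L\hookrightarrow\prod_i\GL_{n_i}\times T'$ by Lemma~\ref{L:embed [L,L] s.c.}, invoking Theorem~\ref{T:At-coprime-unique-det}(2), and descending with Lemma~\ref{L:iso descends to reductions}) and the general case (lifting along the central extension of Lemma~\ref{L:proj to L when [L,L] non s.c.} and using Lemma~\ref{L:diag-grps-surj}). The only cosmetic differences are your up-front identification of $\cM_{Z([L,L])}(\bk)$ with $\ker(\det_*)$ and the remark that transitivity of the abelian action makes all stabilisers coincide, both of which are implicit in the paper's argument.
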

\begin{proof}
	Let us put $Z:=Z(L)$ and $\coZ:=L/[L,L]$  the center and the cocenter of $L$.
	
	First we show that if $\cL\in \cM_{Z(L)}^0(\bk)$ stabilises some $\cF\in \cM_L^{\vlam_L}(\bk)$ then $\cL\in \cM_{Z([L,L])}(\bk)$. This is actually quite simple and follows from the commutativity of the diagram	
		\[ \begin{tikzcd}
		L\times Z(L)\dar{=} \rar{\det\times\det} & \coZ\times \coZ \rar{m} & \coZ \dar{=}\\
		L\times Z(L)\rar{m} & L\rar{\det} & \coZ.
		\end{tikzcd}
		 \]
	Indeed, from the diagram we infer that  $\det(\cL\otimes \cF)\simeq \det(\cL)\otimes\det(\cF)$ and hence if $\cL\otimes \cF\simeq \cF$ we obtain $\det(\cL)\simeq \cO$, in other words $\cL$ admits a reduction to $Z([L,L])$.
	
	Notice that here we haven't used the semistability or genus one.
	
	The converse is a bit more technical and uses stability and genus one. Let $\cL$ be a $Z([L,L])$-bundle on $E$ and $\cF$ a stable $L$-bundle on $E$ of degree $\vlL$. 
	We need to show that $\cL\otimes\cF_L\simeq \cF_L$.
	
	Let us split the argument depending on whether $[L,L]$ is simply connected or not.
	
	(a) $[L,L]$ simply connected.  Lemma~\ref{L:embed [L,L] s.c.} provides an embedding $L\subset \prod_i \GL_{n_i}\times T'=:H$ where $T'$ is a torus and such that $[L,L]=[H,H]$. Using Lemma~\ref{L:iso descends to reductions} we can suppose $L=H$ in which case the statement is equivalent to Theorem~\ref{T:At-coprime-unique-det} (2).

	(b) $[L,L]$ arbitrary.  From Lemma~\ref{L:proj to L when [L,L] non s.c.} there is a central extension 
	\[ 1\to T'\to L'\to L\to 1 \]
	 with $T'$ a torus and $[L',L']$ simply connected. 
	
	Pick $\vlam_{L'}\in\pi_1(L')$ a lift of $\vlam_L$. From Theorem~\ref{T:BH-Bun_Z-torsor} we have that the map
	\[\Bun_{L'}^{\vlam_{L'},\st}\to\Bun_L^{\vlam_L,\st}\]
	is a $\Bun_{T'}^0$-torsor, in particular there exists a stable $L'$-bundle $\cF'$ which lifts $\cF$. 
	
	Since $Z([L',L'])$ surjects onto $Z([L,L])$,  Lemma~\ref{L:diag-grps-surj} shows that there exists a $Z([L',L'])$-bundle $\cL'$ which lifts $\cL$. 
	
	Applying (a) to $L'$ we get $\cF'\otimes \cL'\simeq \cF'$ and pushing forward to $L$-bundles and using the commutativity of the following diagram 
		\[ \begin{tikzcd}
		L'\times Z(L') \rar{m'} \dar & L' \dar\\
		L\times Z(L) \rar{m} & L
		\end{tikzcd} \]
	we eventually get $\cF\otimes\cL\simeq \cF$ which concludes the proof.
\end{proof}

\subsection{The determinant map}
\begin{lemma}\label{L:endo stable vbd}
	Let $\cV$ be a stable vector bundle over a smooth projective curve $X$. Then $\End(\cV)=\bk\cdot\Id$.
\end{lemma}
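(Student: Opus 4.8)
The plan is to show that $\End(\cV)$ is a finite-dimensional division algebra over the algebraically closed field $\bk$, from which the statement follows at once. Two inputs are needed. First, since $X$ is proper, the coherent cohomology group $\Hom(\cV,\cV)=\End(\cV)$ is a finite-dimensional $\bk$-vector space. Second, stability forces every nonzero endomorphism to be an isomorphism, i.e. $\cV$ is \emph{simple}. Granting these, the conclusion is pure algebra: $\End(\cV)$ is then a finite-dimensional division $\bk$-algebra, and for any $\phi$ the commutative subalgebra $\bk[\phi]$ is a finite field extension of $\bk$, hence equal to $\bk$, so $\phi\in\bk\cdot\Id$.

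The heart of the matter is the simplicity statement, so I would argue that no nonzero $\psi\in\End(\cV)$ can fail to be an isomorphism. Given such a $\psi$, set $\cK=\Ker(\psi)$ and $\cI=\Image(\psi)$, so that $\cI\subseteq\cV$ is a nonzero subsheaf while $\cI\cong\cV/\cK$ is a quotient. If $\cK=0$, then $\psi$ is injective, so $\cI\cong\cV$ has the same rank and degree as $\cV$; thus the torsion quotient $\cV/\cI$ has degree $0$ and vanishes, forcing $\cI=\cV$ and making $\psi$ an isomorphism, contrary to assumption. If instead $\cK\neq 0$, then $\cK$ is a proper nonzero subsheaf (proper because $\psi\neq 0$), so stability gives $\mu(\cK)<\mu(\cV)$; since $\deg$ and $\rk$ are additive in the short exact sequence $0\to\cK\to\cV\to\cI\to 0$, the slope $\mu(\cV)$ is a weighted mean of $\mu(\cK)$ and $\mu(\cI)$, which forces $\mu(\cI)>\mu(\cV)$. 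But $\cK\neq 0$ means $\rk\cI<\rk\cV$, so $\cI$ is itself a proper nonzero subsheaf of $\cV$, and stability gives $\mu(\cI)<\mu(\cV)$ — a contradiction. Hence every nonzero $\psi$ is an isomorphism. The one delicate point is precisely this slope bookkeeping: one must exploit that $\cI$ is simultaneously a sub-object and a quotient, and this is where \emph{stability} (not merely semistability) is essential.

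To make the final reduction fully elementary, rather than invoking division algebras I would alternatively finish as follows: given arbitrary $\phi\in\End(\cV)$ and a point $x\in X$, the fiber map $\phi_x$ on the finite-dimensional $\bk$-vector space $\cV_x$ has an eigenvalue $\lambda\in\bk$ because $\bk$ is algebraically closed. Then $\psi:=\phi-\lambda\,\Id$ has singular fiber at $x$, so it is not an isomorphism, and by the previous paragraph $\psi=0$, that is $\phi=\lambda\,\Id$. This yields $\End(\cV)=\bk\cdot\Id$, as claimed.
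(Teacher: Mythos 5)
Your proof is correct and follows essentially the same route as the paper: a Schur-lemma argument that picks an eigenvalue $\lambda$ of $\phi$ on a fiber and uses stability to force $\phi-\lambda\Id$ to vanish. You merely fill in more carefully the kernel/image slope bookkeeping that the paper's terser proof leaves implicit.
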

\begin{proof}
	This is well-known and is a version of Schur's Lemma. Let $\phi\in\End(\cV)$ and let $\lam\in\bk$ be an eigenvalue of $\phi$ at some point. Then the endomorphism $\phi-\lam\Id$ of $\cV$ has a non-trivial kernel which must be of degree zero. The stability of $\cV$ implies at once that the kernel must be $\cV$, in other words $\phi=\lambda\Id$.
\end{proof}

\begin{cor}\label{C:endom are center}
	Let $L$ be a reductive group isomorphic to a product of groups of type $A$ and let $\cF$ be a stable $L$-bundle on $E$. Then $\rH^0(E,\fl_\cF)=z(\fl)$.
\end{cor}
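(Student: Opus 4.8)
The plan is to embed $L$ into a product of general linear groups, so that the adjoint bundle $\fl_\cF$ becomes a subbundle of a sum of endomorphism bundles of \emph{stable} vector bundles, and then to read off its global sections from Schur's lemma (Lemma~\ref{L:endo stable vbd}). I would first reduce to the case that $[L,L]$ is simply connected: by Lemma~\ref{L:proj to L when [L,L] non s.c.} there is a central extension $1\to T'\to L'\to L\to 1$ with $T'$ a torus and $[L',L']$ simply connected, and by Theorem~\ref{T:BH-Bun_Z-torsor} the bundle $\cF$ lifts to a stable $L'$-bundle $\cF'$. Because $T'$ is a central torus, $\ft'$ splits off $\fl'$ as a trivial adjoint summand, giving $\fl'_{\cF'}\simeq\fl_\cF\oplus(\ft'\otimes\cO_X)$ and $z(\fl')=z(\fl)\oplus\ft'$; so it suffices to treat $L'$.

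Assume now $[L,L]$ is simply connected. By Lemma~\ref{L:embed [L,L] s.c.} fix an embedding $L\hookrightarrow H:=\prod_i\GL_{n_i}\times T'$ with $[L,L]=[H,H]$, set $\cF_H:=\cF\oset{L}{\times}H$, and let $\cV_i$ be the rank-$n_i$ bundle defined by $\cF_H$ through $H\to\GL_{n_i}$. The crucial input is that each $\cV_i$ is \emph{stable}: its associated $\PGL_{n_i}$-bundle is the image of the stable bundle $\cF$ under $L\to L^\ad\to\PGL_{n_i}$, which is semistable by Corollary~\ref{C:induced is semistable} and of degree coprime to $n_i$ by Theorem~\ref{T:JH for Gbdles}(5), hence stable; and a vector bundle is stable precisely when its associated $\PGL_{n_i}$-bundle is. Since the adjoint representation of $H$ is $\bigoplus_i\mathfrak{gl}_{n_i}\oplus\ft'$ (the $\GL_{n_i}$-factor acting by conjugation and $T'$ trivially), the adjoint bundle is $\fh_{\cF_H}=\bigoplus_i\mathcal{E}\mathit{nd}(\cV_i)\oplus(\ft'\otimes\cO_X)$, and Lemma~\ref{L:endo stable vbd} together with $\rH^0(X,\cO_X)=\bk$ gives
\[\rH^0(X,\fh_{\cF_H})=\bigoplus_i\bk\cdot\Id_{\cV_i}\oplus\ft'=z(\fh).\]

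It remains to descend along $\fl\hookrightarrow\fh$. The resulting injection $\rH^0(X,\fl_\cF)\hookrightarrow\rH^0(X,\fh_{\cF_H})=z(\fh)$ identifies $\rH^0(X,\fl_\cF)$ with the constant sections valued in $\fl\cap z(\fh)$; as any element of $\fl$ that centralizes $\fh$ a fortiori centralizes $\fl$, this forces $\rH^0(X,\fl_\cF)\subseteq z(\fl)$. Conversely, every $v\in z(\fl)$ is fixed by the adjoint action of $L$ and hence defines a constant — thus global — section of $\fl_\cF$: the action of the connected group $L$ on $z(\fl)$ has vanishing differential (since $[\fl,z(\fl)]=0$), so its image in $\GL(z(\fl))$ is a smooth connected subgroup with trivial Lie algebra, hence is trivial. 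Combining the two inclusions yields $\rH^0(X,\fl_\cF)=z(\fl)$.

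The two places that require genuine care, both in positive characteristic, are the stability (not merely semistability) of the bundles $\cV_i$ — which is exactly where the coprimality of Theorem~\ref{T:JH for Gbdles}(5) and the elliptic-curve hypotheses enter — and the triviality of the $L$-action on $z(\fl)$, since a connected group in characteristic $p$ may a priori act nontrivially with zero differential, and this is excluded here only because the image group is smooth. I expect the stability of $\cV_i$ to be the main obstacle, as it is the step that genuinely relies on the heavy structural results established earlier.
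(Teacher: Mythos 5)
In characteristic zero your argument is correct, and it is organized differently from the paper's: you reduce \emph{upwards} (first the central extension of Lemma~\ref{L:proj to L when [L,L] non s.c.} to make $[L,L]$ simply connected, then the embedding of Lemma~\ref{L:embed [L,L] s.c.}), whereas the paper reduces \emph{downwards}, using $0\to z(\fl)_\cF\to\fl_\cF\to\fl^\ad_\cF$ to reduce to the vanishing $\rH^0(X,\fl^\ad_\cF)=0$, which it then checks on a $\prod_i\GL_{n_i}$-bundle lifting the induced $L^\ad$-bundle; the paper's route needs neither an embedding nor any splitting of Lie-algebra extensions. That last point is exactly where your proof has a genuine gap in positive characteristic, which is the case the paper advertises. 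The isomorphism $\fl'_{\cF'}\simeq\fl_\cF\oplus(\ft'\otimes\cO_X)$ asserted in your first reduction requires the sequence of $L'$-modules $0\to\ft'\to\fl'\to\fl\to 0$ to split, and this fails whenever $p=\mathrm{char}(\bk)$ divides some $n_i$: for $L=\PGL_p$, $L'=\GL_p$, a splitting would be a $\GL_p$-stable hyperplane of $\mathfrak{gl}_p$ not containing $\Id$; but any stable hyperplane contains all elements $\mathrm{Ad}(g)x-x$, hence contains $\fsl_p$, so $\fsl_p=\ker(\mathrm{tr})$ is the \emph{unique} stable hyperplane, and $\Id\in\fsl_p$ because $\mathrm{tr}(\Id)=p=0$. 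Without the splitting you only get $0\to\ft'\to\rH^0(\fl'_{\cF'})\to\rH^0(\fl_\cF)\to\ft'\otimes\rH^1(X,\cO_X)$, and since $\rH^1(X,\cO_X)\neq 0$ in genus one, knowing $\rH^0(\fl'_{\cF'})=z(\fl')$ gives no control on $\rH^0(\fl_\cF)$. The loss is real, not cosmetic: for a stable $\PGL_p$-bundle $\cF$ on $E$ in characteristic $p$, lifted to a stable $\cV$ of rank $p$, the map $\rH^1(E,\cO)\to\rH^1(E,\mathfrak{gl}_{p,\cF'})$ is Serre-dual to the trace on $\End(\cV)=\bk\cdot\Id$, which vanishes since $\mathrm{tr}(\Id)=p=0$; hence the connecting map is an isomorphism $\rH^0(E,\mathfrak{pgl}_{p,\cF})\simeq\rH^1(E,\cO)=\bk$, while $z(\mathfrak{pgl}_p)=0$. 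So in this case the asserted equality itself fails, and any proof (yours or another) must assume $\mathrm{char}(\bk)\nmid n_i$; note that the same hypothesis is silently used in the paper's own deduction of the vanishing \eqref{E:vanish proj endom}.

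A second, smaller gap: your argument that $L$ acts trivially on $z(\fl)$ is invalid in characteristic $p$. Vanishing of the differential does not force the image of $L\to\GL(z(\fl))$ to have trivial Lie algebra, because forming Lie algebras does not commute with taking images in characteristic $p$: the Frobenius $t\mapsto t^p$ on $\Gm$ has zero differential and is surjective. Unlike the first gap this one is repairable in the relevant cases: when $\mathrm{char}(\bk)\nmid n_i$ one has $\fh=\fl+z(\fh)$ for your embedding $L\hookrightarrow H=\prod_i\GL_{n_i}\times T'$, hence $z(\fl)\subseteq z(\fh)$, and $H$ (so also $L$) acts trivially on $z(\fh)$. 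Finally, a minor point: to obtain stability of the $\cV_i$ you invoke Corollary~\ref{C:induced is semistable}, which is specific to elliptic curves, although the corollary under proof is stated over an arbitrary smooth projective curve $X$; the detour is unnecessary, since stability of $\cF$ formally implies stability of the induced $L^\ad$-bundle (a central quotient changes no parabolic reductions) and hence of each $\PGL_{n_i}$-factor.
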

\begin{proof}
We have an exact sequence
\[ 0\to z(\fl)=\rH^0(E,z(\fl)_\cF)\to \rH^0(E,\fl_\cF)\to \rH^0(E,\fl^\ad_\cF) \]
and it is enough to show the vanishing $\rH^0(E,\fl^\ad_\cF)=0$. 

If $L=\prod_i \GL_{n_i}$ then Lemma~\ref{L:endo stable vbd} proves  $z(\fl)=\rH^0(E,z(\fl)_\cF)\simeq \rH^0(E,\fl_\cF)$. 
Moreover, Corollary~\ref{C:M_PGLn=pt} implies 
\begin{equation}\label{E:vanish proj endom}
\rH^0(E,\fl^\ad_\cF)=0
\end{equation}

In general, the group $L^\ad\simeq \prod_i \PGL_{n_i}$ is also the adjoint group of $L':=\prod_i \GL_{n_i}$ and the $L^\ad$-bundle $\cF/Z(L)$ lifts to an $L'$-bundle.
Since the bundles $\fl^\ad_\cF$ and $\fl'^\ad_{\cF'}$ are isomorphic, from \ref{E:vanish proj endom} applied to $L'$ we deduce $ \rH^0(X,\fl^\ad_\cF)=0$ which is what we needed.
\end{proof}

\begin{rem}
	The statement of the corollary is true for any reductive group $G$ and any stable $G$-bundle on a smooth projective curve $X$ in characteristic zero. However, the proof is more involved (see for example \cite[Proposition 3.2]{Ram-stable}). 
	I don't know whether it is true in positive characteristic for any smooth projective curve and any group. 
\end{rem}

\begin{lemma} \label{L:G surjects onto H then smooth}
	Let $G\twoheadrightarrow H$ be a surjective map of algebraic groups. Then the map $\Bun_G(X)\to\Bun_H(X)$ is smooth.
\end{lemma}
\begin{proof}
		Denote by $\pi:G\twoheadrightarrow H$ the morphism and let us consider the exact triangle of tangent complexes for the induced map $\pi:\Bun_G\to \Bun_H$
\[ \bT_{\pi} \to \bT_{\Bun_G} \to \pi^*\bT_{\Bun_H} \stackrel{+}{\to}. \]
We obtain a long exact sequence in cohomology whose end terms are
\[\rH^1(X,\fg_\cF)\to \rH^1(X,\fh_\cF)\to \rH^2(X,\fk_\cF)=0,\]
where we put $\fk=\mathrm{Lie}(\ker(\pi))$. 
	It follows that $\pi$ is smooth.
\end{proof}

\begin{lemma} \label{L:det is etale} Let $L,\vlam_L$ be as in Theorem~\ref{T:JH for Gbdles} (5).
	Then $\det:\cM_L^{\vlam_L}\to\cM_{L/[L,L]}^{\det(\vlam_L)}$ is étale.
\end{lemma}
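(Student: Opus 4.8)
The plan is to establish étaleness through the infinitesimal (differential) criterion, using the extra symmetry to reduce to a single bundle. Since $\det$ intertwines the action of $\cM_{Z(L)}^0$ on $\cM_L^{\vlam_L}$ with that of $\cM_{\coZ}^0$ on $\cM_{\coZ}^{\det(\vlam_L)}$ (via the homomorphism $\det_*\colon\cM_{Z(L)}^0\to\cM_{\coZ}^0$), and since by Corollary~\ref{C:M_Z acts trans on M_L} the source action is transitive, it is enough to prove that $\det$ is étale at one stable $\cF_L$; equivariance then spreads the property to every point. On their reduced structures both moduli spaces are homogeneous under abelian varieties, hence smooth and projective of the same dimension $\dim\coZ=\dim Z(L)$, so at each point étaleness is equivalent to the differential being an isomorphism of tangent spaces.

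Next I would identify the differential. Deformation theory presents the tangent space to $\Bun_L^{\vlam_L,\st}$ at $\cF_L$ as $\rH^1(E,\fl_{\cF_L})$, and $\det$ is induced by the $L$-equivariant projection $\fl\to\fl/[\fl,\fl]=\mathrm{Lie}(\coZ)$, so on tangent complexes it becomes $\rH^1(E,\fl_{\cF_L})\to\rH^1(E,\mathrm{Lie}(\coZ)\otimes\cO_E)$. Writing $\fk=\ker\bigl(\fl\to\mathrm{Lie}(\coZ)\bigr)=\mathrm{Lie}[L,L]$, the short exact sequence of associated bundles
\[0\to \fk_{\cF_L}\to \fl_{\cF_L}\to \mathrm{Lie}(\coZ)\otimes\cO_E\to 0\]
shows, because $E$ is a curve so that $\rH^2(E,\fk_{\cF_L})=0$, that the differential is \emph{always} surjective; the entire content is thus its injectivity, which is controlled by the cohomology of $\fk_{\cF_L}$ via the connecting map of the long exact sequence.

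The injectivity I would extract from the geometry of the elliptic curve. The bundle $\fk_{\cF_L}$ has degree zero (the roots of $[\fl,\fl]$ occur in opposite pairs), so by Riemann--Roch on genus one $\chi(\fk_{\cF_L})=0$ and $\dim\rH^0(E,\fk_{\cF_L})=\dim\rH^1(E,\fk_{\cF_L})$; moreover Serre duality applies since $\omega_E\simeq\cO_E$. Combining this with the endomorphism computation of Corollary~\ref{C:endom are center} (which, applied to the semisimple part, pins down $\rH^0(E,\fk_{\cF_L})$ as the centre $z([\fl,\fl])$) reduces injectivity to the statement that the connecting homomorphism $\rH^0(E,\mathrm{Lie}(\coZ)\otimes\cO_E)\to\rH^1(E,\fk_{\cF_L})$ is surjective, i.e.\ to a Lie-theoretic comparison between $z(\fl)\to\mathrm{Lie}(\coZ)$ and $z([\fl,\fl])$. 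As a sanity check in the simply connected case, Lemma~\ref{L:embed [L,L] s.c.} embeds $L$ into $\prod_i\GL_{n_i}\times T'$ with the same derived group, where by Lemma~\ref{L:iso descends to reductions} the question is transported to $\det$ for $\prod_i\GL_{n_i}\times T'$, and there Atiyah's Theorem~\ref{T:At-coprime-unique-det} makes $\det$ literally an isomorphism.

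I expect the main obstacle to be positive characteristic. When $p=\mathrm{char}(\bk)$ divides some $n_i$, the centre $Z(L)$ and the Lie-algebra centre $z(\fl)$ cease to agree (already $\mathrm{Lie}(Z(\PGL_p))=0$ while $z(\mathfrak{pgl}_p)\neq0$), the automorphism group schemes of the stable $L$-bundles acquire an infinitesimal part, and consequently $\rH^1(E,\fl_{\cF_L})$ \emph{over-counts} the tangent space of the \emph{coarse} space $\cM_L^{\vlam_L}$: the surplus deformations coming from $\fk_{\cF_L}$ are absorbed by the non-reduced gerbe $\Bun_L^{\vlam_L,\st}\to\cM_L^{\vlam_L}$ (compare the reduced structure forced in Corollary~\ref{C:M_PGLn=pt}). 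Treating this honestly is precisely where the reduced-stabiliser analysis of Lemma~\ref{L:stab M_Z acts on M_L} must enter: one passes to reduced structures and shows that the residual kernel of $\det$ is étale, equivalently that $\cM_{Z([L,L])}$ contributes no infinitesimal direction transverse to the orbit. Verifying this matching---that each non-reduced deformation killed in $\cM_L^{\vlam_L}$ is also killed in the target---is, I anticipate, the delicate heart of the proof.
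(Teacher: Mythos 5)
Your proposal establishes surjectivity of the differential exactly as the paper does (the kernel term $\rH^2(E,\fk_{\cF_L})$ vanishes on a curve), but it never closes the injectivity half, and that is a genuine gap. You correctly reduce injectivity to the surjectivity of the connecting homomorphism $\rH^0(E,\mathrm{Lie}(\coZ)\otimes\cO_E)\to\rH^1(E,\fk_{\cF_L})$, but then you leave this as ``a Lie-theoretic comparison'' and ultimately declare it ``the delicate heart of the proof'' without proving it; the ensuing discussion of positive-characteristic pathologies, reduced structures, and the gerbe $\Bun_L^{\vlam_L,\st}\to\cM_L^{\vlam_L}$ is speculation rather than argument, and no complete proof emerges from it.

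The irony is that you had all the needed ingredients in hand and assembled them toward a harder intermediate statement than necessary. The paper's proof never touches the connecting map: it simply counts dimensions. By Corollary~\ref{C:endom are center}, $\rH^0(E,\fl_{\cF})=z(\fl)$ for a stable $L$-bundle; since $\fl_{\cF}$ has degree zero on a genus-one curve, Riemann--Roch and Serre duality give $\dim\rH^1(E,\fl_{\cF})=\dim\rH^0(E,\fl_{\cF})=\dim z(\fl)$. The target $\rH^1(E,(\fl/[\fl,\fl])_{\det(\cF)})$ has dimension $\dim(\fl/[\fl,\fl])=\dim z(\fl)$ (equivalently, $\cM_{L/[L,L]}^{\det(\vlam_L)}$ is a product of components of $\Pic(E)$, smooth of dimension $\dim(L/[L,L])$). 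A surjective linear map between finite-dimensional spaces of equal dimension is an isomorphism, and étaleness follows at every point at once --- which also makes your preliminary reduction to a single point via the $\cM_{Z(L)}^0$-equivariance (correct, but unnecessary) dispensable. If you want to salvage your route through the long exact sequence, note that surjectivity of the connecting map is \emph{equivalent} to injectivity of the differential, so you would in any case have to inject the dimension count somewhere; the paper's ordering of the argument is the efficient one.
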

\begin{proof} 
	We need to show that for every stable $L$-bundle $\cF$ the tangent map 
	\[ d_\cF\det:\rH^1(E,\fl_\cF)\to \rH^1(E,(\fl/[\fl,\fl])_{\det(\cF)})  \]
	is an isomorphism. 
	Surjectivity is immediate from Lemma~\ref{L:G surjects onto H then smooth}, in other words $\det$ is smooth.
		
	Let us now compute the dimensions. Since $L/[L,L]$ is a torus we know that $\cM_{L/[L,L]}^\vlam$ is a product of $\dim(L/[L,L])$ connected components of $\Pic(E)$, in particular it is smooth of dimension equal to $\dim(L/[L,L])$.
	
	On the other hand, let $\cF\in \cM_L^{\vlam_L}$. Recall that over an elliptic curve we have $\dim \rH^0(E,\fl_\cF)=\dim \rH^1(E,\fl_\cF)$ by Riemann--Roch. 
	From Corollary~\ref{C:endom are center} we obtain  $\dim(\rH^1(E,\fl_\cF))=\dim(z(\fl))$. Since $\dim(z(\fl))=\dim(\fl/[\fl,\fl])$ for any reductive Lie algebra $\fl$, we're done.
\end{proof}
\subsection{Proof of \cref{T:det is iso}}
	Now we can finish the proof of \cref{T:det is iso}.
	
	We show that
	\[\det:\cM_L^{\vlam_L}\to \cM_{L/[L,L]}^{\det(\vlam_L)}\]
	is finite and bijective on $\bk$-points. We conclude that it is an isomorphism since a finite, étale map of degree one is an isomorphism.
	
	The moduli space $\cM_L^{\vlam_L}$ is a proper variety, in particular $\det$ is a proper map. Since it is also étale (from \cref{L:det is etale}) we deduce it is finite.
	
	Now the surjectivity of $\det:\cM_L^{\vlam_L}(\bk)\to \cM_{L/[L,L]}^{\det(\vlam_L)}(\bk)$ follows at once.
	
	For the injectivity, let $\cF,\cF'\in \cM_L^\vlam(\bk)$ be such that $\det(\cF)\simeq \det(\cF')$. The action of $\cM_{Z(L)}^0(\bk)$ on $\cM_L^\vlam(\bk)$ is transitive by Corollary~\ref{C:M_Z acts trans on M_L}, hence there exists $\cL\in \cM_{Z(L)}^0(\bk)$ such that $\cF'\simeq \cF\otimes \cL$. By taking determinants we have (see proof of Lemma~\ref{L:stab M_Z acts on M_L})
	\[ \det(\cF') \simeq \det(\cF)\otimes\det(\cL).\]
	From the assumption on $\cF$ and $\cF'$ we obtain $\det(\cL)=\cO$, or in other words $\cL$ admits a reduction to $Z([L,L])$.  Lemma~\ref{L:stab M_Z acts on M_L} implies $\cF\otimes \cL\simeq \cF$ and hence $\cF\simeq\cF'$, or in other words $\det$ is injective on $\bk$-points. \qedhere

\begin{rem}\label{R:univ bdle on M_L} 
	Given this simple description of $\cM_L^{\vlam_L}$ one might be led to think that the existence of a universal bundle on it is automatic from the classical Poincar\'e bundle on the Picard variety. This is not the case. For example, if $Z(L)$ is not connected then Theorem 6.8 from \cite{BisHoff-Poincare_families} says that $\cM_L^{\vlam_L}$ does not admit a universal bundle (called Poincar\'e bundle in loc.cit.). On the other hand, the same theorem tells us that if $[L,L]$ is simply connected and $Z(L)$ is connected then there is a universal bundle. I haven't determined precisely what happens if $[L,L]$ is not simply connected, one of the issues being that the automorphism group of a stable $L$-bundle is bigger than $Z(L)$ in this situation.
\end{rem}

\appendix
\section{}

In this Appendix we provide a table (taken from \cite{Fratila2016}) with the Levi subgroups $L_{\vlam_G}$ appearing in Theorem~\ref{T:thmGalois covering}, as well as their relative Weyl groups $W_{L,G}$. We omit $\vlam_G=0$ since in this case the Levi subgroup is always equal to the maximal torus.

\bigskip
{\small 
\begin{center}
\begin{tabular}[b]{|c|c|c|c|c|}
\hline
$G$ & $\vlG$ & \begin{tabular}{c}Type of $L$\end{tabular} & \begin{tabular}{c}Diagram \\ of $(G,L)$\end{tabular} & \begin{tabular}{c} Type of\\ $W_{L,G}$ \end{tabular}\\
\hline
\hline
$A_{n-1}$ & $d$ & \begin{tabular}{c}$\Ane\times\dots\times \Ane$\\ $e=\gcd(n,d)$\end{tabular} & $\boxed{\Ane}\!\!-\!\!\circ\dots\circ\!\!-\!\boxed{\Ane}$ & $A_{e-1}$\\
\hline
\hline
$B_n$ & 1 & ${A_1}^{\vphantom{\sum}}_{\vphantom{\sum}}$ & $\dynkin{B}{oooo.oo*}$ & $C_{n-1}$\\
\hline
\hline
$C_{2n}$ & 1 & $\underbrace{A_1\times A_1\cdots \times A_1}_{n}^{\vphantom{\sum}}$ & 
$\dynkin{C}{*o*o.o*o}$ & $C_n$ \\
\hline
$C_{2n+1}$ & 1 & $\underbrace{A_1\times A_1\dots \times A_1}_{n+1}^{\vphantom{\sum}}$ & $\dynkin{C}{*o*o.*o*}$ & $C_n$\\
\hline
\hline
\multirow{4}{*}{$D_{2n+1}$} & 1 &$A_1\times \dots\times A_1\times A_3$  & 
\dynkin{D}{*o*o.o***} & $C_{n-1}$ \\\cline{2-5}

& 2 &$A_1\times A_1$  & \dynkin{D}{oooo.oo**}
 & $C_{n-1}$\\

\hline
\multirow{6}{*}{$D_{2n}$} & (1,0) &$A_1\times\dots\times A_1$ & \dynkin{D}{*o*o.*o*o}
 & \begin{tabular}{c}$B_n$
\end{tabular}	\\\cline{2-5}
&(0,1) & $A_1\times A_1$ & 

\dynkin{D}{oooo.oo**}
 & \begin{tabular}{c}$C_{2n-2}$
\end{tabular}	\\\cline{2-5}
 & (1,1) & $A_1\times\dots\times A_1$ & 

\dynkin{D}{*o*o.*oo*}
 & $C_n$	\\
\hline
\hline
$E_6$ & $1$ &$A_2\times A_2$ & 

\dynkin{E}{*o*o**}
 & $G_2$ \\
\hline
\hline
$E_7$ & $1$ & $A_1\times A_1\times A_1$ & 

\dynkin{E}{o*oo*o*}
&$F_4$\\
\hline
\end{tabular}
\end{center}}

\newpage

\end{document}